\newcommand*{\mailto}[1]{\href{mailto:#1}{\nolinkurl{#1}}}
\numberwithin{equation}{section}
\newtheorem{example}{Example}[section]
\newtheorem{theorem}[example]{Theorem}
 \newtheorem{proposition}{Proposition}
\newtheorem{lemma}{Lemma}
 \newtheorem{corollary}[example]{Corollary}
\newtheorem{remark}[example]{Remark}
\newtheorem*{maintheorem*}{Main Theorem}
\numberwithin{equation}{section}
\renewcommand{\i}{\ifmmode\mathit{\mathchar"7010 }\else\char"10 \fi}
\renewcommand{\j}{\ifmmode\mathit{\mathchar"7011 }\else\char"11 \fi}
\newcommand{\R}{\mathbb{R}}
\newcommand{\C}{\mathbb{C}}
\newcommand{\Z}{\mathbb{Z}}
\newcommand{\px}{\partial_x}
\newcommand{\I}{\mathrm{i}}
\begin{document}
\sloppy

\title[Nonlocal mKdV equation with step-like oscillating background]
{Riemann-Hilbert approach for the nonlocal modified Korteweg-de Vries equation with a step-like oscillating background}

\author[Rybalko]{Yan Rybalko}

\address[Yan Rybalko]{\newline
	Department of Mathematics, \newline University of Oslo, \newline
	PO Box 1053, Blindern -- 0316 Oslo, Norway
	\medskip
	\newline B. Verkin Institute for Low Temperature Physics and Engineering
	of the National Academy of Sciences of Ukraine,
	\newline 47 Nauky Ave., Kharkiv, 61103, Ukraine}
\email[]{rybalkoyan@gmail.com}

\subjclass[2020]{Primary: 35G25; Secondary: 35Q53, 37K10}
% 35G25 - Initial value problems for nonlinear higher-order PDEs
% 35B30 - Dependence of solutions to PDEs on initial and/or boundary data and/or on parameters of PDEs
% 35Q53 - KdV equations
% 37K10 - Completely integrable infinite-dimensional Hamiltonian and Lagrangian systems, integration methods, integrability tests, integrable hierarchies (KdV, KP, Toda, etc.)

\keywords{modified Korteweg-de Vries equation, asymptotic analysis, inverse scattering transform method, soliton solutions,
nonlocal (Alice-Bob) integrable system}

\thanks{The work of Yan Rybalko was supported 
by
the European Union’s Horizon Europe research and innovation programme under the Marie Sk\l{}odowska-Curie grant agreement No 101058830.}

\dedicatory{To the memory of Volodymyr Kotlyarov}
\date{\today}

\begin{abstract}
This work focuses on the Cauchy problem for the nonlocal modified Korteweg-de Vries equation
$$
u_t(x,t)+6u(x,t)u(-x,-t)u_x(x,t)+u_{xxx}(x,t)=0,
$$
with the oscillating step-like boundary conditions: 
$u(x,t)\to 0$ as $x\to-\infty$ and $u(x,t)\backsimeq A\cos(2Bx+8B^3t)$ as
$x\to\infty$, where $A,B>0$ are arbitrary constants.
The main goal is to develop the Riemann-Hilbert formalism for this problem,
paying a particular attention to the case of the ``pure oscillating step'' initial data, that is $u(x,0)=0$ for $x<0$ and $u(x,0)=A\cos(2Bx)$ for $x\geq0$.
Also, we derive three new families of two-soliton solutions, which correspond to the values of $A$ and $B$ satisfying $B<\frac{A}{4}$, $B>\frac{A}{4}$, and $B=\frac{A}{4}$.
\end{abstract}

\maketitle

%{\small \tableofcontents}
\numberwithin{lemma}{section}
\numberwithin{proposition}{section}

%%%%%%%%%%%%%%%%%%%%%%%
%%%%%%%%%%%%%%%%%%%%%%%
\section{Introduction}

We study the
Cauchy problem for the nonlocal modified Korteweg-de Vries (nmKdV) equation:
\begin{subequations}
\label{nmkdvs}
\begin{align}
\label{nmkdvs-a}
&u_t(x,t)+6u(x,t)u(-x,-t)u_x(x,t)+u_{xxx}(x,t)=0,\quad x,t\in\R,\\
\label{nmkdvs-b}
&u(x,0)=u_0(x),
\end{align}
\end{subequations}
where the solution $u(x,t)$ satisfies the oscillating step-like boundary conditions, which read as follows:
\begin{equation}\label{bcs}
	u(x,t)=o(1),\quad x\to-\infty,\quad 
	u(x,t)=A\cos\left(2Bx+8B^3t\right)+o(1),\quad x\to\infty,
\end{equation}
for all fixed $t\in\R$ and with arbitrary $A,B>0$.
In particular, we consider the initial data $u_0(x)$ in the form 
of the ``pure oscillating step'' function, that is
\begin{equation}\label{sidR}
	u_0(x)=\begin{cases}
		0,&x<0,\\
		A\cos2Bx,&x\geq 0.
	\end{cases}
\end{equation}
Notice that the problem \eqref{nmkdvs}--\eqref{bcs} with
$B=0$ was previously covered in \cite{LSTZ25, LZ21, XF23}.

The nmKdV equation was derived by Ablowitz and Musslimani \cite{AM16} as a nonlocal reduction of a member of the Ablowitz-Kaup-Newell-Segur
(AKNS) hierarchy \cite{AKNS74}.
In more detail, consider the following integrable two-component AKNS system:
\begin{equation}\label{tcmkdv}
\begin{split}
	&u_t(x,t)+u_{xxx}(x,t)+6u(x,t)v(x,t)u_x(x,t)=0,\\
	&v_t(x,t)+v_{xxx}(x,t)+6u(x,t)v(x,t)v_x(x,t)=0.
\end{split}
\end{equation}
Then, taking $v(x,t)=u(-x,-t)$, one arrives at the nonlocal (two-place) mKdV equation \eqref{nmkdvs-a}, while for $v(x,t)=u(x,t)$ one has the conventional focusing mKdV equation, which reads
\begin{equation}\label{mKdV}
	u_t(x,t)+6u^2(x,t)u_x(x,t)+u_{xxx}(x,t)=0,\quad x,t\in\R.
\end{equation}
Apart from being integrable, the nmKdV equation has a parity-time (PT) symmetry property: if $u(x,t)$ satisfies \eqref{nmkdvs-a}, so does $u(-x,-t)$.
Then, \eqref{nmkdvs-a} admits various exact soliton solutions with zero and nonzero background, which were studied by the inverse scattering transform, Hirota method, and Darboux transformation in
\cite{AMO24, GP22, GP19, JZ17, LZ21, M21, ZTYZ24, ZY20, ZC23}.
Also, the nmKdV equation is an example of the two-place (Alice-Bob) systems, which govern phenomena having strong correlations 
of the events at non-neighboring places \cite{LH17}
(see also \cite{AM17, F16} for another examples of the nonlocal integrable models).
Notice that \cite[Equation (29)]{TLH18} suggests different two-place versions of the mKdV equation, derived from a two-vortex interaction model relevant for the atmospheric and oceanic dynamics.
Finally, we observe that the Cauchy problem for the general two-component system \eqref{tcmkdv}
with the initial data satisfying $v_0(x)=u_0(-x)$ reduces to the Cauchy problem for the nmKdV equation, provided uniqueness of the solution holds.

The step-like problems for the integrable equations have been studied since the work by Gurevich and Pitaevskii \cite{GP73}, who considered the Korteweg-de Vries (KdV) equation with the step-like boundary conditions
\begin{equation}\label{bcg}
	u(x,t)=o(1),\quad x\to-\infty,\quad
	u(x,t)=f(x,t)+o(1),\quad x\to\infty,
\end{equation}
with $f(x,t)$ being a nonzero constant function
(see, for example, \cite{EPT24, EGKT13, EHS17} and references therein for the recent advances in the study of this problem).
The mKdV equation \eqref{mKdV} with the step-like initial data was considered for the first time in \cite{KhK89} (see also \cite{B95}).
The Riemann-Hilbert approach for these problems was developed in \cite{BM19, GM20, KM10, KM15}, where, in particular, the authors rigorously studied the long-time asymptotic behavior of the solution in different space-time regions.
Considering the Cauchy problem for the mKdV equation with the boundary conditions \eqref{bcg}, the function $f(x,t)$ typically satisfies the 
\textit{nonlinear} equation \eqref{mKdV}.
We refer the reader to \cite{CAA16, CP19, LS23}, which deal with the periodic solutions of the focusing mKdV equation.
In contrast, since the nonlinear term in the nonlocal mKdV equation involves values of the solution at both $x$ and $-x$,
the function $f(x,t)$ on the background
\eqref{bcg} satisfies the \textit{linear} equation
\begin{equation*}
	f_t(x,t)+f_{xxx}(x,t)=0,
\end{equation*}
which has a simple non-constant periodic solution
$f(x,t)=A\cos\left(2Bx+8B^3t\right)$, as per \eqref{bcs}.
Notice that the complex focusing mKdV equation, where the nonlinear term has the form $6|u|^2u_x$ and $u$ is a complex-valued function, admits a plane wave solution in the form $Ae^{-2\I Bx+2\I\omega t}$, with $\omega=6A^2B-4B^3$ \cite{TWZ25, WXF23}.

The work \cite{RST25} deals with the oscillating step-like problem for another
two-place integrable equation, namely, the nonlocal nonlinear Schr\"odinger (NNLS) equation, which reads as follows \cite{AM13}:
\begin{equation}
	\label{NNLS}
	\mathrm{i}q_t(x,t)+q_{xx}(x,t)
	+2q^{2}(x,t)\bar{q}(-x,t)=0,
	\quad q\in\mathbb{C},\quad
	\mathrm{i}^2=-1.
\end{equation}
Here and below, $\bar{q}$ denotes the complex conjugate of $q$.
In \cite{RST25}, the authors take
$f(x,t)=Ae^{2\I Bx-4\I B^2t}$ in \eqref{bcg} (with $q$ instead of $u$), and
develop the inverse scattering transform method for this problem.
The work \cite{RST25} shows that the properties of zeros
of the spectral function $a_1(k)$ for $\mathrm{Im}\,k\geq0$, associated to the ``pure step'' initial data
\begin{equation*}
	q_{0}(x)=\begin{cases}
		0,&x\leq0,\\
		A e^{2\I Bx},&x>0,
	\end{cases}
\end{equation*}
depend on the sign of $4B^2-A^2$.
In more detail, if $4B^2<A^2$, the function $a_1(k)$ has one simple purely imaginary zero; for $4B^2>A^2$, $a_1(k)$ has no zeros, and $a_1(k)$ has one zero of multiplicity two at $k=0$ for $4B^2=A^2$, see \cite[Proposition 2.4]{RST25}.
Notably, the associated basic Riemann-Hilbert problem has a singularity 
on the real axis at $k=-B$,
which leads to the qualitatively different large-time asymptotic behavior of the solution for $B$ having different sign \cite[Theorems 3.4, 3.5]{RST25}.
We also refer the reader to \cite{BFP16, BLS22, DPMV14, FLQ25} and references therein for a related discussion of the step-like problems for the conventional NLS equation.

In this paper, we develop the inverse scattering transform method in the form of the Riemann-Hilbert problem for the Cauchy problem \eqref{nmkdvs}--\eqref{bcs}.
We show that the spectral functions associated to the 
step-like initial data \eqref{sidR} have different properties 
depending on the sign of $B-\frac{A}{4}$: the behavior of zeros in the upper complex half-pane is different for $B<\frac{A}{4}$, $B>\frac{A}{4}$,
and $B=\frac{A}{4}$, cf.~\cite{RST25}.
Notably, we prove that
the scattering data,
particularly zeros of the spectral functions $a_j(k)$, $j=1,2$,
associated to the small (in the $L^1$ norm) perturbations of the pure step initial profile are fully determined by one spectral function $b(k)$, see Propositions \ref{za1a2} and \ref{tza1}.
Also, we establish that the associated Riemann-Hilbert problem has \textit{two} singular points on the real axis at $k=\pm B$,
not one as for the NNLS equation (see Proposition \ref{Bcon}).

Considering the associated Riemann-Hilbert problem in the reflectionless case
(i.e., with $b(k)=0$),
we obtain three new families of two-soliton solutions 
of the nmKdV equation satisfying boundary conditions \eqref{bcs},
see \eqref{usI}, \eqref{usII}, and \eqref{usIII}.
Here, each family of solutions corresponds to two simple real zeros
at $k=\pm B$, and either two simple purely imaginary zeros at
$k=\I k_j$, $0<k_1<k_2$,
two simple zeros at $k=p_1$ and $k=-\bar{p}_1$, $\mathrm{Re}\,p_1<0$,
$\mathrm{Im}\,p_1>0$, or
one purely imaginary zero of multiplicity two at $k=\I\ell_1$, $\ell_1>0$, in the upper complex half-plane.
Taking into account that the spectral data are defined in terms of $b(k)$ only, the values of $k_j$, $j=1,2$, $p_1$, and $\ell_1$ are fully determined by $A$ and $B$ for each soliton solution, see \eqref{zrlc}.
Another remarkable feature of the obtained solitons is the blow-up in the uniform norm along certain curves in the $(x,t)$-plane, which can be recognized from Figures \ref{fusI}, \ref{fusII}, and \ref{fusIII}.
%Notice that each type of these two-soliton solutions 
%corresponds to one of the possible 
%behaviors of zeros of $a_1(k)$ described above.

The article is organized as follows.
Section \ref{istrh} is devoted to the Riemann-Hilbert formalism for the 
problem \eqref{nmkdvs} 
with the oscillating step-like boundary conditions \eqref{bcs}.
More precisely, in Sections \ref{Ef} and \ref{Sd}, we introduce the Jost solutions and define the associated spectral functions.
Section \ref{ps-id} studies the pure step initial data \eqref{sidR},
while Section \ref{Asssd} describes assumptions on the scattering data, associated to the general initial data $u_0(x)$, and characterizes them in terms of one spectral function $b(k)$.
Then, we construct the basic Riemann-Hilbert problem in Section \ref{BRHp}, which is formulated in Theorem \ref{TBRHp}.
Finally, Section \ref{twsol} defines three new families of two-soliton solutions, as detailed in Theorem \ref{Thtws}.

%These solitons correspond to the reflectionless case
%and the spectral
%, where the spectral function $a_1(k)$ has three different behavior of zeros,
%similar to that described above.
%since the behavior of $a_1(k)$ near the singular points $k=\pm B$ depends on whether another spectral function, $a_2(k)$, $\mathrm{Im}\,k\leq0$

\medskip
\textbf{Notations.}
$X_{ij}$ denotes the $(i,j)$ element of the matrix $X$,
$X^{(i)}$ stands for the $i$-th column of $X$,
and $X^T$ is the matrix transpose to $X$.
We denote the $2\times2$ identity matrix by $I$, and
use the following Pauli matrices:
\begin{equation}\label{P-mat}
	\sigma_1=\begin{pmatrix}0& 1\\1 & 0\end{pmatrix},\quad
	\sigma_3=\begin{pmatrix}1& 0\\0 & -1\end{pmatrix}.
\end{equation}
%$\sqrt{k}=\sqrt{|k|}e^{\I\frac{\arg k}{2}}$, where $\arg k\in[0,2\pi)$.
Also, we use notations
$\mathbb{C}^{\pm}=\left\{k\in\mathbb{C}\,|\pm
\mathrm{Im}\,k>0\right\}$ and
$\overline{\mathbb{C}^{\pm}}=\left\{k\in\mathbb{C}\,|\pm
\mathrm{Im}\,k\geq0\right\}$.

\section{Inverse scattering transform and the Riemann-Hilbert problem}
\label{istrh}
In this section, we develop the inverse scattering transform method in the form of the Riemann-Hilbert problem for the Cauchy problem \eqref{nmkdvs} with the boundary conditions \eqref{bcs}.
To this end, we define the Jost solutions $\Psi_j(x,t,k)$, $j=1,2$, of the linear Volterra integral equations (see \eqref{Psi1}--\eqref{Psi2} below),
which are closely related to the solutions $\Phi_j(x,t,k)$ of the Lax pair for the nmKdV equation, see \eqref{Phi}.
Observe that one of the distinctive features of the $2\times2$ matrix solutions $\Psi_j$, $j=1,2$, is that their columns have singularities at $k=\pm B$, described in
item (vi) of Proposition \ref{pPsi}.
Then, we introduce the spectral functions $a_j(k)$, $j=1,2$, and $b(k)$ in terms of the known initial data $u_0(x)$ through the determinants of the columns of $\Psi_j(0,0,k)$, see \eqref{sd}.

The crucial properties of the spectral functions $a_1(k)$ and $a_2(k)$ are their zeros in the closed upper and lower complex half-planes, respectively.
To motivate our assumptions on zeros of $a_j(k)$, $j=1,2$,
corresponding to the general initial value $u_0(x)$,
we investigate zeros of these functions for the pure oscillating step function \eqref{sidR}, as discussed in Section \ref{ps-id}.
We prove that depending on the values of $A$ and $B$, there are three possible scenarios of behavior of zeros of $a_1(k)$,
see Proposition \ref{a_1s}, which highlights the relevance of considering the scattering data described in
Cases I, II, and III, see Section \ref{Asssd}.
Moreover, we cover three more classes of spectral functions,
see Cases $\widetilde{\mathrm{I}}$, $\widetilde{\mathrm{II}}$, and $\widetilde{\mathrm{III}}$
in Sections \ref{Asssd},
where $a_2(k)$ has two simple zeros at $k=\pm B$.
Also, we establish that both $a_1(k)$ and $a_2(k)$ are fully determined by the spectral function $b(k)$; in particular, we obtain the exact formulas for their zeros in terms of $b(k)$, as detailed in Propositions \ref{za1a2} and \ref{tza1}.

Finally, defining the sectionally meromorphic $2\times 2$ matrix $M(x,t,k)$
in terms of 
the columns of $\Psi_j(x,t,k)$ and $a_j(k)$, $j=1,2$,
we show that this matrix satisfies a Riemann-Hilbert problem with a jump condition across $\R\setminus\{B,-B\}$.
Also, it has residue conditions at the zeros of $a_1(k)$,
and certain singularity conditions at $k=\pm B$. 
Notably, the solution $u(x,t)$ of the initial Cauchy problem \eqref{nmkdvs} can be recovered by the inverse transform in terms of $(1,2)$ or $(2,1)$ elements of $M(x,t,k)$ (see Section \ref{BRHp} and Theorem \ref{TBRHp} for details).
\subsection{Eigenfunctions}\label{Ef}

%The nmKdV equation is a compatibility condition of the system of linear equations (the Lax pair), which reads
Consider the following system of linear equations (recall \eqref{P-mat}):
\begin{equation}
\label{LPnmkdv}
\begin{split}
	&\Phi_{x}+\I k\sigma_{3}\Phi=U(x,t)\Phi,\\
	&\Phi_{t}+4\I k^{3}\sigma_{3}\Phi=V(x,t,k)\Phi,
\end{split}
\end{equation}
where $\Phi=\Phi(x,t,k)$ is a $2\times2$ matrix-valued function, $k\in\mathbb{C}$ is an auxiliary (spectral) parameter, 
and the coefficients $U(x,t)$ and $V(x,t,k)$ are given in terms of the solution $u(x,t)$ of \eqref{nmkdvs-a} as follows:
\begin{equation}
	\label{U}
	U(x,t)=\begin{pmatrix}
		0& u(x,t)\\
		-u(-x,-t)& 0\\
	\end{pmatrix},
\end{equation}
\begin{equation}
	\label{V}
	V(x,t,k)=\begin{pmatrix}
		\tilde{A}& \tilde{B}\\
		\tilde{C}& -\tilde{A}\\
	\end{pmatrix},
\end{equation}
with 
\begin{equation*}
\begin{split}
	&\tilde{A}=2\I k u(x,t)u(-x,-t)-u_x(x,t)u(-x,-t)
	+u(x,t)(u(-x,-t))_x,\\
	&\tilde{B}=4k^2u(x,t)+2\I ku_x(x,t)
	-2u^2(x,t)u(-x,-t)-u_{xx}(x,t),\\
	&\tilde{C}=-4k^2u(-x,-t)+2\I k(u(-x,-t))_x
	+2u(x,t)u^2(-x,-t)+(u(-x,-t))_{xx}.
\end{split}
\end{equation*}
Then system \eqref{LPnmkdv} is the Lax pair for the nmKdV equation:
its compatibility condition
%, which reads
%(here $[X,Y]=XY-YX$ is the matrix commutator)
\begin{equation}\label{comp}
	U_t-V_x
	+[U-\I k\sigma_3,V-4\I k^3\sigma_3]=0,
\end{equation}
where $[X,Y]=XY-YX$ is the matrix commutator,
is equivalent to \eqref{nmkdvs-a}.

Since the solution $u(x,t)$ satisfies boundary conditions \eqref{bcs}, we have the following limits of $U$ and $V$ as $x\to\pm\infty$ (here $t$ and $k$ are fixed):
\begin{equation*}
	U(x,t)-U_{\pm}(x,t)=o(1),\,\,
	x\to\pm\infty,\quad
	V(x,t,k)-V_{\pm}(x,t,k)=o(1),\,\,
	x\to\pm\infty,
\end{equation*}
where
\begin{equation}
	\label{Upm}
	U_+(x,t)=
	\begin{pmatrix}
		0 & A\cos(2Bx+8B^3t)\\
		0 & 0
	\end{pmatrix},
	\qquad\quad\,\,\,\,\,
	U_-(x,t)=
	\begin{pmatrix}
		0 & 0\\
		-A\cos(2Bx+8B^3t) & 0
	\end{pmatrix},
\end{equation}
and
\begin{equation}\label{Vpm}
\begin{split}
	&V_+(x,t,k)=
	\begin{pmatrix}
		0 & 4A(k^2+B^2)\cos(2Bx+8B^3t)
		-4\I ABk\sin(2Bx+8B^3t)\\
		0 & 0
	\end{pmatrix},\\
	&V_-(x,t,k)=
	\begin{pmatrix}
		0 & 0\\
		-4A(k^2+B^2)\cos(2Bx+8B^3t)-4\I ABk\sin(2Bx+8B^3t)& 0
	\end{pmatrix}.
\end{split}
\end{equation}

Now we introduce the ``background solutions'' $\Phi_\pm(x,t,k)$ solving the Lax pair \eqref{LPnmkdv} with $U_\pm$ and $V_\pm$ instead of $U$ and $V$, respectively.
Direct computations show that $\Phi_\pm(x,t,k)$ can be taken as follows:
\begin{equation}\label{phi+-}
	\Phi_\pm(x,t,k)=
	N_\pm(x,t,k)e^{-(\I kx+4\I k^3t)\sigma_3},
\end{equation}
where
\begin{equation}\label{N+-}
\begin{split}
	&N_+(x,t,k)=
	\begin{pmatrix}
		1&-\frac{A}{2\left(k^2-B^2\right)}
		\left(
		B\sin\left(2Bx+8B^3t\right)+\I k\cos\left(2Bx+8B^3t\right)		
		\right)\\
		0&1
	\end{pmatrix},\\
	&N_-(x,t,k)=
	\begin{pmatrix}
		1&0\\
		\frac{A}{2\left(k^2-B^2\right)}
		\left(
		B\sin\left(2Bx+8B^3t\right)-\I k\cos\left(2Bx+8B^3t\right)		
		\right)&1
	\end{pmatrix}.
\end{split}
\end{equation}
Observe that \eqref{phi+-} reduces to \cite[Equation (17)]{XF23} for $B=0$.

Using the background solutions $\Phi_\pm$, we formally define the $2\times2$ matrix-valued functions $\Psi_j(x,t,k)$, $j=1,2$, as the solutions of the following
linear Volterra integral equations:
\begin{equation}
	\label{Psi1}
	\begin{split}
		\Psi_1(x,t,k)=N_-(x,t,k)+
		\int_{-\infty}^{x}G_-(x,y,t,k)
		\left(U-U_-\right)(y,t)
		\Psi_1(y,t,k)e^{\I k(x-y)\sigma_3}\,dy,
	\end{split}
\end{equation}
\begin{equation}
	\label{Psi2}
	\begin{split}
		\Psi_2(x,t,k)=N_+(x,t,k)-
		\int^{\infty}_{x}G_+(x,y,t,k)
		\left(U-U_+\right)(y,t)
		\Psi_2(y,t,k)e^{\I k(x-y)\sigma_3}\,dy,
	\end{split}
\end{equation}
where $N_\pm$ and $U_\pm$ are given in \eqref{N+-} and \eqref{Upm}, respectively, and
(see \eqref{phi+-})
\begin{equation}
	\label{G+-}
	G_\pm(x,y,t,k)=
	\Phi_\pm(x,t,k)
	\Phi_\pm^{-1}(y,t,k)
	=N_\pm(x,t,k)e^{\I k(y-x)\sigma_3}N_\pm^{-1}(y,t,k).
\end{equation}
In the next proposition, we relate $\Psi_j$, $j=1,2$, to the solutions of the Lax pair \eqref{LPnmkdv}.
Also, we summarize their important symmetry and analytical properties, thus generalizing \cite[Propositon 1]{XF23}, which considers the case $B=0$.

\begin{proposition}
	\label{pPsi}
	Assume that $xu(x,t)\in L^1(-\infty,a)$ and
	$\left(
	u(x,t)-A\cos(2Bx+8B^3t)
	\right)\in L^1(a,\infty)$ with respect to the spatial variable $x$, for all fixed $t,a\in\R$.
	Then the matrices $\Psi_j(x,t,k)$, $j=1,2$, given in \eqref{Psi1}--\eqref{Psi2}, have the following properties:
	\begin{enumerate}[label=(\roman*)]
		
		\item the functions $\Phi_j(x,t,k)$, $j=1,2$ defined by
		\begin{equation}
			\label{Phi}
			\Phi_j(x,t,k)=\Psi_j(x,t,k)
			e^{-(\I kx+4\I k^3t)\sigma_3},\quad j=1,2,
			\quad k\in\R\setminus \{-B,B\},
		\end{equation}
		are the Jost solutions of the Lax pair equations (\ref{LPnmkdv}) satisfying 
		the boundary conditions, see \eqref{phi+-}
		\begin{equation*} 
		\begin{split}
			&\Phi_1(x,t,k)-\Phi_-(x,t,k)=o(1),\quad x\to-\infty,\\
			&\Phi_2(x,t,k)-\Phi_+(x,t,k)=o(1),\quad x\to\infty;
		\end{split}
		\end{equation*}

		\item the columns $\Psi_1^{(1)}(x,t,k)$ and $\Psi_2^{(2)}(x,t,k)$ are  
		analytic in $k\in\C^+$ and continuous in 
		$\overline{\C^+}\setminus\{-B,B\}$.
		Moreover,
		$$
		\Psi_1^{(1)}(x,t,k)=
		\begin{pmatrix}
			1\\
			0\end{pmatrix}
		+O\left(k^{-1}\right),\quad \Psi_2^{(2)}(x,t,k)=
		\begin{pmatrix}
			0\\
			1\end{pmatrix}
		+O\left(k^{-1}\right), \quad 
		k\to\infty, \quad  k\in\C^+;
		$$
		\item the columns $\Psi_1^{(2)}(x,t,k)$ and $\Psi_2^{(1)}(x,t,k)$ are  
		analytic in $k\in\C^-$ and continuous in $\overline{\C^-}$.
		Moreover,
		$$
		\Psi_1^{(2)}(x,t,k)=
		\begin{pmatrix}
			0\\
			1\end{pmatrix}
		+O\left(k^{-1}\right),\quad
		\Psi_2^{(1)}(x,t,k)=
		\begin{pmatrix}
			1\\
			0\end{pmatrix}
		+O\left(k^{-1}\right),\quad k\to\infty,\quad
		k\in\C^-;
		$$
		
		\item $\det\Psi_j(x,t,k)=1$, $j=1,2$, for any $x,t\in\R$ and $k\in\R\setminus\{-B,B\}$;
		
		\item $\Psi_j(x,t,k)$, $j=1,2$, satisfy the following  symmetry relations (recall \eqref{P-mat}):
		\begin{equation}\label{Psi-sym}
		\begin{split}
			&\sigma_1\Psi_1(-x,-t,k)\sigma_1^{-1}
			=\Psi_2(x,t,k),\\
			&\sigma_1\overline{\Psi_1}\left(-x,-t,-\bar{k}\right)\sigma_1^{-1}
			=\Psi_2(x,t,k),
		\end{split}
		\end{equation}
		which hold for the first and the second columns for
		$k\in\overline{\C^-}$ and 
		$k\in\overline{\C^+}\setminus\{-B,B\}$, respectively;

		\item the columns of $\Psi_j$, $j=1,2$, have the following expansions as $k\to\pm B$,
		
		\begin{equation*}
		\begin{split}
			&\Psi_1^{(1)}(x,t,k)=\frac{1}{k-B}
			\begin{pmatrix}v_1(x,t)\\v_2(x,t)\end{pmatrix}+O(1),
			\quad k\to B,\\
			&\Psi_1^{(2)}(x,t,k)=\frac{4\I}{A}
			e^{-2\I Bx-8\I B^3t}
			\begin{pmatrix}v_1(x,t)\\v_2(x,t)\end{pmatrix}+O(k-B),\quad
			k\to B,
		\end{split}
		\end{equation*}
		\begin{equation*}
		\begin{split}
			&\Psi_2^{(1)}(x,t,k)=\frac{4\I}{A}
			e^{2\I Bx+8\I B^3t}
			\begin{pmatrix}v_2(-x,-t)\\v_1(-x,-t)\end{pmatrix}+O(k-B),
			\quad k\to B,\\
			&\Psi_2^{(2)}(x,t,k)=\frac{1}{k-B}
			\begin{pmatrix}v_2(-x,-t)\\v_1(-x,-t)\end{pmatrix}+O(1),
			\quad k\to B,
		\end{split}
		\end{equation*}
		\begin{equation*}
		\begin{split}
			&\Psi_1^{(1)}(x,t,k)=-\frac{1}{k+B}
			\begin{pmatrix}\bar{v}_1(x,t)\\\bar{v}_2(x,t)\end{pmatrix}+O(1),
			\quad k\to -B,\\
			&\Psi_1^{(2)}(x,t,k)=-\frac{4\I}{A}
			e^{2\I Bx+8\I B^3t}
			\begin{pmatrix}\bar{v}_1(x,t)\\
				\bar{v}_2(x,t)\end{pmatrix}+O(k+B),\quad
			k\to -B,
		\end{split}
		\end{equation*}
		and
		\begin{equation*}
		\begin{split}
			&\Psi_2^{(1)}(x,t,k)=-\frac{4\I}{A}
			e^{-2\I Bx-8\I B^3t}
			\begin{pmatrix}\bar{v}_2(-x,-t)\\
				\bar{v}_1(-x,-t)\end{pmatrix}+O(k+B),
			\quad k\to -B,\\
			&\Psi_2^{(2)}(x,t,k)=-\frac{1}{k+B}
			\begin{pmatrix}\bar{v}_2(-x,-t)\\\bar{v}_1(-x,-t)\end{pmatrix}+O(1),\quad
			k\to -B,
		\end{split}
		\end{equation*}
		where $v_j(x,t)$, $j=1,2$, solve the following system of linear Volterra integral equations:
		\begin{equation}\label{v1v2-s}
			\begin{split}
				&v_1(x,t)=\int_{-\infty}^x u(y,t)v_2(y,t)\,dy,\\
				&v_2(x,t)=-\I\frac{A}{4}e^{2\I Bx+8\I B^3t}
				-\int_{-\infty}^xu(-y,-t)v_1(y,t)e^{2\I B(x-y)}\,dy.
			\end{split}
		\end{equation}

	\end{enumerate}
	
\end{proposition}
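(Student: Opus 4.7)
The plan is to derive all six items from a systematic analysis of the Volterra integral equations \eqref{Psi1}--\eqref{Psi2}. Under the stated $L^1$ hypotheses, a standard Neumann-series argument yields existence, uniqueness, and uniform bounds for $\Psi_j(x,t,k)$ on compact subsets of $k\in\C\setminus\{\pm B\}$ (the exceptional points being the poles of $N_\pm$). Item \emph{(i)} then follows by differentiating \eqref{Psi1}--\eqref{Psi2} in $x$ and $t$ and using that $N_\pm e^{-(\I kx+4\I k^3t)\sigma_3}$ solves the Lax pair with $(U_\pm,V_\pm)$; the asymptotic boundary behavior reduces to the vanishing of the Volterra integrals at the appropriate infinity. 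Item \emph{(iv)} is Abel's identity for the trace-free Lax operator together with $\det N_\pm=1$.

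For \emph{(ii)}--\emph{(iii)}, I would expand $G_\pm=N_\pm(x,t,k)e^{\I k(y-x)\sigma_3}N_\pm^{-1}(y,t,k)$ column by column: the oscillatory factors multiplying $\Psi_1^{(1)}$ (resp.\ $\Psi_2^{(2)}$) in the Neumann iterates are of the form $e^{2\I k(x-y)}$ with $y<x$ (resp.\ $y>x$), which decay for $\mathrm{Im}\,k>0$, while the opposite phases govern the remaining two columns. Combined with uniform Neumann estimates, this yields analyticity in $\C^\pm$, continuity up to the closed half-planes except at $\pm B$, and the $I+O(k^{-1})$ asymptotics. For \emph{(v)}, one verifies from \eqref{Upm}--\eqref{N+-} the identities
\[
	\sigma_1 U(-x,-t)\sigma_1=-U(x,t),\qquad \sigma_1 U_\pm(-x,-t)\sigma_1=-U_\mp(x,t),\qquad \sigma_1 N_\mp(-x,-t,k)\sigma_1=N_\pm(x,t,k),
\]
together with their complex-conjugated analogues under $k\mapsto-\bar k$. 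The substitution $y\mapsto -y$ in \eqref{Psi1}, combined with these identities, converts the Volterra equation for $\Psi_1$ into the Volterra equation \eqref{Psi2} satisfied by $\sigma_1\Psi_1(-x,-t,k)\sigma_1$, and uniqueness forces \eqref{Psi-sym}.

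The principal obstacle is \emph{(vi)}. Because of the prefactor $(k^2-B^2)^{-1}$ in $N_\pm$, the inhomogeneity of the Volterra equation for $\Psi_1^{(1)}$ develops a simple pole in its second entry at $k=B$; using the elementary identity $B\sin\theta-\I B\cos\theta=-\I B e^{\I\theta}$ with $\theta=2Bx+8B^3t$, its residue equals $-\I A e^{\I\theta(x,t)}/4$. Rather than tracking the full Laurent expansion of the Volterra kernel (where the pole of $G_-$ at $k=B$ cancels between its two contributions), my plan is to pass directly to the ODE form of the Lax pair: the first column $\Psi_1^{(1)}=(\psi_1,\psi_2)^T$ satisfies
\[
	(\psi_1)_x=u(x,t)\psi_2,\qquad
	(\psi_2)_x-2\I k\psi_2=-u(-x,-t)\psi_1.
\]
Substituting the Laurent ansatz $\psi_j(x,t,k)=v_j(x,t)/(k-B)+O(1)$ and matching $(k-B)^{-1}$ coefficients yields $(v_1)_x=u v_2$ and $(v_2)_x-2\I B v_2=-u(-x,-t)v_1$; the regularity of $(N_-)_{11}=1$ at $k=B$ forces $v_1(-\infty,t)=0$, while the residue computed above forces $v_2(x,t)\sim -\I A e^{\I(2Bx+8B^3t)}/4$ as $x\to-\infty$, and integration (with integrating factor $e^{-2\I Bx}$ for the second equation) reproduces \eqref{v1v2-s}. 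For the regular leading value of $\Psi_1^{(2)}$ at $k=B$, the Laurent expansion of the identity $\det\Psi_1\equiv 1$ at order $(k-B)^{-1}$ forces $\Psi_1^{(2)}(x,t,B)=c(x,t)(v_1,v_2)^T$ for some scalar $c$; substituting into the second-column Lax ODE yields $c_x+2\I B c=0$ (and $c_t+8\I B^3 c=0$ from the second Lax equation), and the asymptotic condition $\Psi_1^{(2)}\to (0,1)^T$ at $x\to-\infty$ pins down $c(x,t)=4\I e^{-2\I Bx-8\I B^3t}/A$. The four remaining expansions --- $\Psi_1$ near $k=-B$ and $\Psi_2$ near $k=\pm B$ --- follow at once from the symmetries \eqref{Psi-sym} established in \emph{(v)}: the first relation transports expansions of $\Psi_1$ to those of $\Psi_2$ at the same $k$, while the second (with $k\mapsto-\bar k$) sends each expansion at $k=B$ to its complex conjugate at $k=-B$, automatically producing the $\bar v_j$ coefficients in the statement.
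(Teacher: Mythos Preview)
Your treatment of items (i)--(v) matches the paper's: standard Neumann-series estimates, Abel/Liouville for the determinant, and the $\sigma_1$-symmetries of $U$, $U_\pm$, $N_\pm$ (plus their $k\mapsto-\bar k$ analogues) together with uniqueness for the Volterra equations.

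For item (vi) your argument is correct but differs from the paper's. The paper works entirely at the level of the integral equation: it expands the kernel $G_-$ explicitly to show it is regular at $k=B$ (the apparent pole in the $(2,1)$ entry cancels, as you noted), substitutes the Laurent ansatz into \eqref{Psi1} columnwise, and obtains two separate Volterra systems---one for the residue $(v_1,v_2)$ of $\Psi_1^{(1)}$ and one for the leading value $(w_1,w_2)$ of $\Psi_1^{(2)}$. It then compares the two systems directly to read off $(w_1,w_2)=\tfrac{4\I}{A}e^{-2\I Bx-8\I B^3 t}(v_1,v_2)$. You instead pass to the differential Lax-pair form to derive the ODEs for $(v_1,v_2)$ and integrate them, and for the second column you exploit the $(k-B)^{-1}$ coefficient of $\det\Psi_1\equiv 1$ to force $(w_1,w_2)=c(x,t)(v_1,v_2)$, then determine $c$ from the second-column ODE and the asymptotics at $-\infty$. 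Your route is slicker for linking the two columns (the determinant trick replaces the paper's explicit comparison of two integral systems), while the paper's route keeps everything inside the Volterra framework and so makes the existence of the Laurent expansion and the precise integral equations \eqref{v1v2-s} emerge simultaneously without invoking the ODE separately.
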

\begin{proof}
	Items (i)--(iii) follow directly from the compatibility condition \eqref{comp} and the definition of $\Psi_j(x,t,k)$ given in \eqref{Psi1}--\eqref{Psi2}.
	Taking into account that the matrices $(U-\I k\sigma_3)$ and
	$(V-4\I k^3\sigma_3)$ are traceless, we conclude item (iv) from the Liouville's theorem.
	Using the following symmetry relations:
	\begin{equation*}
		\begin{split}
			&\sigma_1 e^{a\sigma_3}\sigma_1^{-1}=e^{-a\sigma_3},
			\quad a\in\C,
			\quad\sigma_1 U(-x,-t)\sigma_1^{-1}=-U(x,t),\\
			&\quad\sigma_1N_-(-x,-t,k)\sigma_1^{-1}=N_+(x,t,k),\quad
			\sigma_1\overline{N_-}\left(-x,-t,-\bar{k}\right)
			\sigma_1^{-1}=N_+(x,t,k),
		\end{split}
	\end{equation*}
	we obtain item (v).
	\medskip
	
	To prove item (vi), we assume that $\Psi_1(x,t,k)$ has the following expansion as $k\to B$:
	\begin{equation}\label{Psi-1-B}
		\Psi_1^{(1)}(x,t,k)=\frac{1}{k-B}
		\begin{pmatrix}v_1(x,t)\\v_2(x,t)\end{pmatrix}+O(1),\quad
		\Psi_1^{(2)}(x,t,k)=
		\begin{pmatrix}w_1(x,t)\\w_2(x,t)\end{pmatrix}+O(k-B),\quad
		k\to B.
	\end{equation}
	Combining \eqref{Psi-1-B} and symmetry relations \eqref{Psi-sym}, we obtain the following behavior of the columns of $\Psi_2$, as
	$k\to\pm B$:
	\begin{equation*}
		\begin{split}
			&\Psi_2^{(1)}(x,t,k)=
			\begin{pmatrix}w_2(-x,-t)\\w_1(-x,-t)\end{pmatrix}+O(k-B),
			\quad k\to B,\\
			&\Psi_2^{(2)}(x,t,k)=\frac{1}{k-B}
			\begin{pmatrix}v_2(-x,-t)\\v_1(-x,-t)\end{pmatrix}+O(1),\quad
			k\to B,
		\end{split}
	\end{equation*}
	and
	\begin{equation}\label{Psi-2--B}
		\begin{split}
			&\Psi_2^{(1)}(x,t,k)=
			\begin{pmatrix}\overline{w}_2(-x,-t)\\
				\overline{w}_1(-x,-t)\end{pmatrix}+O(k+B),
			\quad k\to -B,\\
			&\Psi_2^{(2)}(x,t,k)=-\frac{1}{k+B}
			\begin{pmatrix}\bar{v}_2(-x,-t)\\
				\bar{v}_1(-x,-t)\end{pmatrix}+O(1),
			\quad k\to -B.
		\end{split}
	\end{equation}
	Using the first symmetry relation in \eqref{Psi-sym}, we conclude from \eqref{Psi-2--B} that
	\begin{equation}\label{Psi-1--B}
		\begin{split}
			&\Psi_1^{(1)}(x,t,k)=-\frac{1}{k+B}
			\begin{pmatrix}\bar{v}_1(x,t)\\\bar{v}_2(x,t)\end{pmatrix}+O(1),
			\quad k\to -B,\\
			&\Psi_1^{(2)}(x,t,k)=
			\begin{pmatrix}\overline{w}_1(x,t)\\
				\overline{w}_2(x,t)\end{pmatrix}+O(k+B),\quad
			k\to -B.
		\end{split}
	\end{equation}
	
	Equation \eqref{G+-} implies that $G_-$ has the following form:
	\begin{equation}\label{G-ex}
		G_-(x,y,t,k)=
		\begin{pmatrix}
			e^{-\I k(x-y)}&0\\
			\frac{A}{k^2-B^2}\tilde{g}(x,y,t,k)&
			e^{\I k(x-y)}
		\end{pmatrix},
	\end{equation}
	with (we drop the arguments of $\tilde{g}$ for simplicity)
	\begin{equation}\label{g-t}
		\begin{split}
			\tilde{g}=
			&\cos B\left(x+y+8B^2t\right)
			\left(B\sin B(x-y)\cos k(x-y)
			-k\sin k(x-y)\cos B(x-y)\right)\\
			&+\I\sin B\left(x+y+8B^2t\right)
			\left(k\sin B(x-y)\cos k(x-y)
			-B\sin k(x-y)\cos B(x-y)\right).
		\end{split}
	\end{equation}
	Using that
	\begin{equation*}
		\begin{split}
			B\sin B(x-y)\cos k(x-y)
			-k\sin k(x-y)\cos B(x-y)=
			&-(k-B)\left(\frac{1}{2}\sin2B(x-y)+B(x-y)\right)\\
			&+O\left((k-B)^2\right),\quad k\to B,
		\end{split}
	\end{equation*}
	and
	\begin{equation*}
		\begin{split}
			k\sin B(x-y)\cos k(x-y)
			-B\sin k(x-y)\cos B(x-y)=
			&(k-B)\left(\frac{1}{2}\sin2B(x-y)-B(x-y)\right)\\
			&+O\left((k- B)^2\right),\quad k\to B,
		\end{split}
	\end{equation*}
	we obtain from \eqref{G-ex} and \eqref{g-t} the following expansion of
	$G_-$ as $k\to B$:
	\begin{equation}\label{G-B}
		\begin{split}
			G_-(x,y,t,k)=
			&\begin{pmatrix}
				e^{-\I B(x-y)}&0\\
				-\frac{A}{2B}\left(\frac{1}{2}\sin2B(x-y)
				e^{-\I B(x+y+8B^2t)}+B(x-y)e^{\I B(x+y+8B^2t)}\right)&
				e^{\I B(x-y)}
			\end{pmatrix}\\
			&+O\left((k- B)^2\right),\quad k\to B.
		\end{split}
	\end{equation}
	Combining \eqref{Psi1}, \eqref{Psi-1-B}, and \eqref{G-B}, we obtain the following system of integral equations for $(v_1,v_2)(x,t)$:
	\begin{equation}\label{v1v2-1}
		\begin{split}
			&v_1(x,t)=\int_{-\infty}^x u(y,t)v_2(y,t)\,dy,\\
			&v_2(x,t)e^{-2\I Bx}=-\I\frac{A}{4}e^{8\I B^3t}
			+I_1(x,t),
		\end{split}
	\end{equation}
	with
	\begin{equation*}
		\begin{split}
			&I_1(x,t)=\int_{-\infty}^x\left(
			\left(A\cos(2By+8B^3t)-u(-y,-t)\right)
			v_1(y,t)e^{-2\I By}\right.\\
			&\qquad\qquad\qquad\quad\,
			\left.-\frac{A}{2B}u(y,t)\left(\frac{1}{2}\sin2B(x-y)
			e^{-2\I B(x+y+4B^2t)}+B(x-y)
			e^{8\I B^3t}\right)v_2(y,t)\right)dy.
		\end{split}
	\end{equation*}
	Equations \eqref{v1v2-1} imply that
	\begin{equation*}
		\begin{split}
			\px\left(v_2(x,t)e^{-2\I Bx}\right)&=
			\left(A\cos(2Bx+8B^3t)-u(-x,-t)\right)v_1(x,t)e^{-2\I Bx}\\
			&\quad\,-\frac{A}{2}\left(
			e^{-4\I B(x+2B^2t)}+e^{8\I B^3t}
			\right)v_1(x,t)\\
			&=-u(-x,-t)v_1(x,t)e^{-2\I Bx},
		\end{split}
	\end{equation*}
	which yields \eqref{v1v2-s}.
	
	Now we derive the system of equations for $(w_1,w_2)$ by using similar arguments as for $(v_1,v_2)$.
	In more detail, \eqref{Psi1}, \eqref{Psi-1-B}, and \eqref{G-B} imply that
	(cf.~\eqref{v1v2-1})
	\begin{equation}
		\label{w1w2-1}
		\begin{split}
			&w_1(x,t)=\int_{-\infty}^x u(y,t)w_2(y,t)e^{-2\I B(x-y)}\,dy,\\
			&w_2(x,t)=1+I_2(x,t),
		\end{split}
	\end{equation}
	where
	\begin{equation*}
		\begin{split}
			&I_2(x,t)=\int_{-\infty}^x\left(
			\left(A\cos(2By+8B^3t)-u(-y,-t)\right)
			w_1(y,t)\right.\\
			&\qquad\qquad\qquad\quad\,
			\left.-\frac{A}{2B}u(y,t)\left(\frac{1}{2}\sin2B(x-y)
			e^{-2\I B(x+4B^2t)}+B(x-y)
			e^{2\I B(y+4B^2t)}\right)w_2(y,t)\right)dy.
		\end{split}
	\end{equation*}
	Then, system \eqref{w1w2-1} yields the following differential equation for $w_2$:
	\begin{equation}\label{w2-x}
		\px w_2(x,t)=-u(-x,-t)w_1(x,t).
	\end{equation}
	We conclude from \eqref{w1w2-1} and \eqref{w2-x} that
	$(w_1,w_2)$ satisfies the system of integral equations, which reads
	\begin{equation}
		\label{w1w2-s}
		\begin{split}
			&w_1(x,t)=\int_{-\infty}^x u(y,t)w_2(y,t)e^{-2\I B(x-y)}\,dy,\\
			&w_2(x,t)=1-\int_{-\infty}^x u(-y,-t)w_2(y,t)\,dy.
		\end{split}
	\end{equation}
	Combining \eqref{v1v2-s} and \eqref{w1w2-s}, we obtain the following relation between $(v_1,v_2)$ and $(w_1,w_2)$:
	\begin{equation*}
		\begin{pmatrix}
			w_1(x,t)\\
			w_2(x,t)
		\end{pmatrix}
		=\frac{4\I}{A}e^{-2\I Bx-8\I B^3t}
		\begin{pmatrix}
			v_1(x,t)\\
			v_2(x,t)
		\end{pmatrix},
	\end{equation*}
	which, together with \eqref{Psi-1-B}--\eqref{Psi-1--B}, implies item (vi) for $B>0$.
\end{proof}

\subsection{Scattering data}
\label{Sd}
The solutions $\Phi_1(x,t,k)$ and $\Phi_2(x,t,k)$ of system \eqref{LPnmkdv}, see \eqref{Phi}, are related as follows:
\begin{equation}
	\label{S}
	\Phi_1(x,t,k)=\Phi_2(x,t,k)S(k),\quad 
	x,t\in\R,\quad
	k\in\R\setminus\{-B,B\},
\end{equation}
where $S(k)$ is the so-called scattering matrix.
Notice that item (iv) of Proposition \ref{pPsi} implies that
$\det S(k)=1$ for all $k$.
Using symmetry relations \eqref{Psi-sym}, we can write the scattering matrix $S(k)$ as follows:
\begin{equation}\label{S-m}
S(k)=
\begin{pmatrix}
	a_1(k)& -b(k)\\
	b(k)& a_2(k)
\end{pmatrix},\quad k\in\R\setminus\{-B,B\}.
\end{equation}
Combining \eqref{Phi}, \eqref{S}, \eqref{S-m}, and applying the Cramer's rule, we conclude that the spectral functions $a_j(k)$, $j=1,2$, and $b(k)$ can be found in terms of the following determinants:
\begin{subequations}\label{sd}
	\begin{align}
		\label{sda}
		&a_1(k)=\det\left(
		\Psi_1^{(1)}(0,0,k),\Psi_2^{(2)}(0,0,k)
		\right),
		\quad k\in\overline{\C^+}
		\setminus\{-B,B\},\\
		\label{sda2}
		&a_2(k)=\det\left(
		\Psi_2^{(1)}(0,0,k),\Psi_1^{(2)}(0,0,k)
		\right),
		\quad k\in\overline{\C^-},\\
		\label{sdb}
		&b(k)=\det\left(
		\Psi_2^{(1)}(0,0,k),\Psi_1^{(1)}(0,0,k)
		\right),
		\quad k\in\R\setminus\{-B,B\}.
	\end{align}
\end{subequations}
Notice that the right-hand sides of equations \eqref{sd} are defined in terms of the known initial data $u_0(x)$ (recall \eqref{Psi1}--\eqref{Psi2}).
Equations \eqref{sd} and items (ii)--(vi) of Proposition \ref{pPsi} yield important analytical and symmetry properties of $a_j(k)$, $j=1,2$, and $b(k)$, summarized in the following proposition.
\begin{proposition}\label{aj-pr}
	The spectral functions $a_j(k)$, $j=1,2$, and $b(k)$, given in \eqref{sd}, satisfy the following properties:
\begin{enumerate}[label=(\roman*)]
	
	\item analyticity:
	$a_{1}(k)$ is analytic in  $k\in\C^{+}$
	and continuous in 
	$\overline{\C^{+}}\setminus\{-B,B\}$;
	$a_{2}(k)$ is analytic in $k\in\C^{-}$
	and continuous in 
	$\overline{\C^{-}}$;
	
	\item behavior as $k\to\infty$:
	$a_1(k)=1+{O}\left(k^{-1}\right)$,
	as $k\rightarrow\infty$, $k\in\overline{\C^+}$;
	$a_2(k)=1+{O}\left(k^{-1}\right)$,
	as $k\rightarrow\infty$, $k\in\overline{\C^-}$;
	and 
	$b(k)={O}\left(k^{-1}\right)$, as $k\rightarrow\infty$, 
	$k\in\R$;
	
	\item symmetries:
	$a_1(k)=\bar{a}_1\left(-\bar{k}\right)$ for  
	$k\in\overline{\C^{+}}\setminus\{-B,B\}$,
	$a_2(k)=\bar{a}_2\left(-\bar{k}\right)$ for
	$k\in\overline{\C^{-}}$,
	and $b(k)=\bar{b}(-k)$ for $k\in\R$;
	
	\item determinant relation:
	$\det S(k)=1$, which is equivalent to
	\begin{equation}\label{detR}
		a_1(k)a_2(k)+b^2(k)=1,\quad
		k\in\R\setminus\{-B,B\};
	\end{equation}
	
	\item behavior as $k\to\pm B$:
	\begin{align}
		\label{a_1+-B}
		&a_1(k)=\frac{A^2a_2(\pm B)}{16(k\mp B)^2}
		+O\left((k\mp B)^{-1}\right),\quad
		k\to\pm B,\quad k\in \overline{\C^{+}},\\
		\label{b+-B}
		&b(k)=-\I\frac{Aa_2(\pm B)}{4(k\mp B)}+O(1),\quad
		k\to\pm B,\quad k\in\R.
	\end{align}
	
\end{enumerate}

\end{proposition}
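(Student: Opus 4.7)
\medskip
\noindent\textbf{Proof proposal.}
The plan is to read every item off the determinantal formulas \eqref{sd} by substituting the analytic, asymptotic, symmetry, and singular-expansion data for the columns of $\Psi_1,\Psi_2$ furnished by Proposition \ref{pPsi}. Items (i)--(iv) will fall out by essentially mechanical computations; the genuine work is in (v), where the singular coefficients at $k=\pm B$ have to be matched with the \emph{regular} parts of $a_2(\pm B)$.

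For (i), since $\Psi_1^{(1)}$ and $\Psi_2^{(2)}$ are analytic in $\C^+$ and continuous on $\overline{\C^+}\setminus\{-B,B\}$ by Proposition \ref{pPsi}(ii), and the determinant is a polynomial in their entries, the claim for $a_1$ follows from \eqref{sda}; the statement for $a_2$ follows symmetrically from \eqref{sda2} using Proposition \ref{pPsi}(iii). For (ii), I would plug the $O(k^{-1})$ expansions of Proposition \ref{pPsi}(ii)--(iii) into \eqref{sd}; the leading $2\times 2$ determinant evaluates to $1+O(k^{-1})$ for $a_1,a_2$, while for $b$ in \eqref{sdb} the leading-order block is $\det\begin{pmatrix}1+O(k^{-1}) & 1+O(k^{-1}) \\ O(k^{-1}) & O(k^{-1})\end{pmatrix}=O(k^{-1})$. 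For (iii), I would apply \eqref{Psi-sym} at $(x,t)=(0,0)$: the first relation converts a column of $\Psi_1$ at $(0,0,k)$ into the corresponding column of $\Psi_2$ conjugated by $\sigma_1$, which flips the sign of the determinant once but preserves it after a second flip when combined with the second relation $k\mapsto-\bar k$ and complex conjugation; tracking the two sign flips against the order of columns in \eqref{sd} gives the Schwarz-type symmetries. For (iv), note that \eqref{S} together with Proposition \ref{pPsi}(iv) gives $\det S(k)=1$, and expanding the determinant of \eqref{S-m} yields $a_1a_2+b^2=1$ on $\R\setminus\{\pm B\}$.

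The main obstacle is (v). Near $k=B$, Proposition \ref{pPsi}(vi) furnishes the expansions
\[
\Psi_1^{(1)}(0,0,k)=\frac{1}{k-B}\begin{pmatrix}v_1(0,0)\\ v_2(0,0)\end{pmatrix}+O(1),\qquad
\Psi_2^{(2)}(0,0,k)=\frac{1}{k-B}\begin{pmatrix}v_2(0,0)\\ v_1(0,0)\end{pmatrix}+O(1),
\]
so \eqref{sda} yields $a_1(k)=\frac{v_1(0,0)^2-v_2(0,0)^2}{(k-B)^2}+O((k-B)^{-1})$. On the other hand, evaluating \eqref{sda2} at $k=B$ using the \emph{regular} expansions
\[
\Psi_2^{(1)}(0,0,B)=\tfrac{4\I}{A}\begin{pmatrix}v_2(0,0)\\ v_1(0,0)\end{pmatrix},\qquad
\Psi_1^{(2)}(0,0,B)=\tfrac{4\I}{A}\begin{pmatrix}v_1(0,0)\\ v_2(0,0)\end{pmatrix},
\]
gives $a_2(B)=\tfrac{16}{A^2}\bigl(v_1(0,0)^2-v_2(0,0)^2\bigr)$, which rearranges to $v_1(0,0)^2-v_2(0,0)^2=\tfrac{A^2}{16}a_2(B)$ and proves \eqref{a_1+-B} at $k=B$. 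The estimate \eqref{b+-B} at $k=B$ follows from \eqref{sdb} by combining the $\frac{1}{k-B}$-singularity of $\Psi_1^{(1)}$ with the finite value of $\Psi_2^{(1)}$ at $B$, and using the same identity for $v_1(0,0)^2-v_2(0,0)^2$. The case $k=-B$ is identical, using the $-\tfrac{1}{k+B}$ and the $\bar{v}_j$-expansions in Proposition \ref{pPsi}(vi); the analogue $\bar{v}_1(0,0)^2-\bar{v}_2(0,0)^2=\tfrac{A^2}{16}a_2(-B)$ is obtained either by a parallel direct computation or by invoking the symmetry $a_2(k)=\overline{a_2(-\bar k)}$ already established in (iii). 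The delicate point will be bookkeeping of signs and the fact that the \emph{regular} piece at $k=\pm B$ inside one spectral function generates the \emph{singular} piece of another, which is what makes the identification of the leading coefficients with $a_2(\pm B)$ non-tautological.
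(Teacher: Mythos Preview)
Your proposal is correct and follows essentially the same approach as the paper: items (i)--(iv) are read off directly from the determinantal formulas \eqref{sd} together with the analytic, asymptotic, and symmetry properties of $\Psi_j$ in Proposition~\ref{pPsi}, and item (v) is obtained by substituting the singular expansions from Proposition~\ref{pPsi}(vi) at $(x,t)=(0,0)$ into \eqref{sd} and recognizing the common factor $v_1(0,0)^2-v_2(0,0)^2=\tfrac{A^2}{16}a_2(B)$. The paper's proof is slightly terser but structurally identical, writing out the three determinants \eqref{a1Bg}--\eqref{a2Bg} and concluding \eqref{a_1+-B}--\eqref{b+-B} by comparison.
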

\begin{proof}
	Taking into account the representations for $a_j(k)$, $j=1,2$, and $b(k)$ given in \eqref{sd}, items (i)--(iv) follow directly from 
	item (ii)--(v) of Proposition \ref{pPsi}.
	To establish item (v), we observe that item (vi) of Proposition \ref{pPsi} yields that
	\begin{equation}\label{a1Bg}
	\begin{split}
		&a_1(k)\sim\frac{1}{(k-B)^2}\det\begin{pmatrix}
			v_1(0,0)& v_2(0,0)\\
			v_2(0,0)& v_1(0,0)
		\end{pmatrix},\quad k\to B,\\
		&a_1(k)\sim\frac{1}{(k+B)^2}\det\begin{pmatrix}
			-\bar{v}_1(0,0)& -\bar{v}_2(0,0)\\
			-\bar{v}_2(0,0)& -\bar{v}_1(0,0)
		\end{pmatrix},\quad k\to-B,
	\end{split}
	\end{equation}
	and
	\begin{equation}\label{bBg}
	\begin{split}
		&b(k)\sim\frac{1}{k-B}\det\begin{pmatrix}
			\frac{4\I}{A}v_2(0,0)& v_1(0,0)\\
			\\[-2ex]
			\frac{4\I}{A}v_1(0,0)& v_2(0,0)
		\end{pmatrix},\quad k\to B,\\
		&b(k)\sim\frac{1}{k+B}\det\begin{pmatrix}
			-\frac{4\I}{A}\bar{v}_2(0,0)& -\bar{v}_1(0,0)\\
			\\[-2ex]
			-\frac{4\I}{A}\bar{v}_1(0,0)& -\bar{v}_2(0,0)
		\end{pmatrix},\quad k\to-B,
	\end{split}
	\end{equation}
	as well as
	\begin{equation}\label{a2Bg}
	\begin{split}
		&a_2(k)\sim\det\begin{pmatrix}
			\frac{4\I}{A}v_2(0,0)& \frac{4\I}{A}v_1(0,0)\\
			\\[-2ex]
			\frac{4\I}{A}v_1(0,0)& \frac{4\I}{A}v_2(0,0)
		\end{pmatrix},\quad k\to B,\\
		&a_1(k)\sim\det\begin{pmatrix}
			-\frac{4\I}{A}\bar{v}_2(0,0)& -\frac{4\I}{A}\bar{v}_1(0,0)\\
			\\[-2ex]
			-\frac{4\I}{A}\bar{v}_1(0,0)& -\frac{4\I}{A}\bar{v}_2(0,0)
		\end{pmatrix},\quad k\to-B.
	\end{split}
	\end{equation}
	Then, \eqref{a1Bg}, \eqref{bBg}, and \eqref{a2Bg} imply \eqref{a_1+-B} and \eqref{b+-B}.
\end{proof}
Invoking the singular behavior of $\Psi_j$, $j=1,2$, near $k=\pm B$ given in item (vi) of Proposition \ref{pPsi}, we can obtain a conservation law for the nmKdV equation having boundary conditions \eqref{bcs} (cf.~\cite[Remark 1]{RS21-DE} and \cite[Remark 3]{XF23}).
\begin{remark}[Conservation law]
	Notice that \eqref{Phi}, \eqref{S}, and item (vi) of Proposition \ref{pPsi} imply the following equation for $a_2(k)$ (cf.~\eqref{sda2}):
	\begin{equation}\label{a2B}
	\begin{split}
		a_2(k)&=\det\left(
		\Psi_2^{(1)}(x,t,k),\Psi_1^{(2)}(x,t,k)
		\right)\\
		&=\frac{16}{A^2}(v_1(x,t)v_1(-x,-t)-v_2(x,t)v_2(-x,-t))
		+O(k-B),\quad k\to B,
	\end{split}
	\end{equation}
	for all $x,t\in\R$, where $v_j(x,t)$, $j=1,2$, are given in \eqref{v1v2-s}.
	Equation \eqref{a2B} yields the following conservation law for \eqref{nmkdvs-a} with boundary conditions \eqref{bcs}:
	\begin{equation}\label{a2cl}
		a_2(B)=v_1(x,t)v_1(-x,-t)-v_2(x,t)v_2(-x,-t),\quad x,t\in\R,
	\end{equation}
	where $v_j(x,t)$, $j=1,2$, can be defined in terms of $u(x,t)$ from \eqref{v1v2-s}.
\end{remark}
\subsection{Pure step initial data}\label{ps-id}
	In this section, we study the spectral functions associated to the
	pure step initial data $u_0(x)$ given in \eqref{sidR}.
	Combining \eqref{S} and \eqref{Phi} with $x=t=0$, we conclude that the scattering matrix $S(k)$ can be calculated as follows
	(recall \eqref{N+-} and \eqref{Psi1}--\eqref{Psi2}):
	\begin{equation*}
	\begin{split}
		S(k)&=\Psi_2^{-1}(0,0,k)\Psi_1(0,0,k)
		=N_+^{-1}(0,0,k)N_-(0,0,k)\\
		&=\begin{pmatrix}
			1+\frac{A^2k^2}{4(k^2-B^2)^2}& \I\frac{Ak}{2(k^2-B^2)}\\
			-\I\frac{Ak}{2(k^2-B^2)}& 1
		\end{pmatrix}.
	\end{split}
	\end{equation*}
	Thus, the spectral functions $a_j(k)$, $j=1,2$, and $b(k)$ corresponding
	to the pure step initial data \eqref{sidR} have the following form (see \eqref{S-m})
	\begin{equation}\label{spf-s}
		a_1(k)=1+\frac{A^2k^2}{4(k^2-B^2)^2},\quad
		a_2(k)=1,\quad b(k)=-\I\frac{Ak}{2(k^2-B^2)}.
	\end{equation}
	In the proposition below we study zeros of $a_1(k)$ defined in \eqref{spf-s} in the closed upper complex half-plane.
	\begin{proposition}\label{a_1s}
		Consider $a_1(k)$ as in \eqref{spf-s}
		Then, depending on the value of the frequency $B>0$, the function
		$a_1(k)$ has the following zeros in $\overline{\C^+}\setminus\{-B,B\}$:
		\begin{enumerate}[label=(\roman*)]
			\item for $0<B<\frac{A}{4}$,
			$a_1(k)$ has two purely imaginary simple zeros at 
			$k=\I k_j$, $j=1,2$, $0<k_1<k_2$, where
			\begin{equation}\label{kj}
				k_j=\frac{1}{4}\left(A+(-1)^j\sqrt{A^2-16B^2}\right),
				\quad j=1,2;
			\end{equation}
			
			\item for $B>\frac{A}{4}$, $a_1(k)$ has two simple zeros at $k=p_1$ and $k=-\bar{p}_1$, where
			\begin{equation}\label{p1}
				p_1=\frac{1}{4}\left(-\sqrt{16B^2-A^2}+\I A\right).
			\end{equation}
			
			\item for $B=\frac{A}{4}$,
			$a_1(k)$ has one purely imaginary zero of multiplicity two
			at $k=\I\frac{A}{4}$;
			
		\end{enumerate}
	\end{proposition}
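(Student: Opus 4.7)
The approach is straightforward algebraic root-finding. I would start by writing
$$a_1(k)=\frac{4(k^2-B^2)^2+A^2k^2}{4(k^2-B^2)^2},$$
so that zeros of $a_1$ in $\overline{\C^+}\setminus\{-B,B\}$ correspond exactly to zeros of the numerator $P(k):=4(k^2-B^2)^2+A^2k^2$ in that region (with the same multiplicity, since the denominator does not vanish there). The key observation is that $P(k)$ factors as a difference of squares:
$$P(k)=\bigl(2(k^2-B^2)\bigr)^2-(\I Ak)^2=\bigl(2k^2-\I Ak-2B^2\bigr)\bigl(2k^2+\I Ak-2B^2\bigr).$$

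Next I would solve each quadratic factor. The first factor $2k^2-\I Ak-2B^2=0$ has discriminant $-A^2+16B^2$ and roots
$$k=\frac{\I A\pm\sqrt{16B^2-A^2}}{4},$$
while the second factor $2k^2+\I Ak-2B^2=0$ yields $k=\frac{-\I A\pm\sqrt{16B^2-A^2}}{4}$. By the symmetry from item (iii) of Proposition \ref{aj-pr}, the roots of the second factor are the negatives of the conjugates of those of the first, which confirms they lie in $\overline{\C^-}$ and can be discarded for our purposes.

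Now I would split into the three cases based on the sign of $16B^2-A^2$.
\begin{itemize}
\item If $0<B<A/4$, then $\sqrt{16B^2-A^2}=\I\sqrt{A^2-16B^2}$, so the two upper half-plane roots become
$k=\frac{\I}{4}\bigl(A\pm\sqrt{A^2-16B^2}\bigr)$,
which are purely imaginary with strictly positive imaginary parts (since $A>\sqrt{A^2-16B^2}>0$), matching formula \eqref{kj}. They are distinct (hence simple) because the discriminant is nonzero.
\item If $B>A/4$, then $\sqrt{16B^2-A^2}>0$ is real, and the two upper half-plane roots are $k=\frac{1}{4}\bigl(\pm\sqrt{16B^2-A^2}+\I A\bigr)$, which are exactly $p_1$ and $-\bar p_1$ with $p_1$ as in \eqref{p1}, and again simple.
\item If $B=A/4$, the discriminant vanishes and the first factor becomes $2\bigl(k-\I A/4\bigr)^2$, so $k=\I A/4=\I B$ is a double root of $P(k)$; since $\I B$ is not a zero of the denominator $4(k^2-B^2)^2$, it is a double zero of $a_1(k)$.
\end{itemize}

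The main obstacle is essentially bookkeeping: correctly identifying which of the four numerator roots lie in $\overline{\C^+}$ in each case and confirming the multiplicity claim at the transition value $B=A/4$. Everything else reduces to the quadratic formula and to checking signs of imaginary parts.
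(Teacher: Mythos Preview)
Your proof is correct and takes a cleaner route than the paper's. The paper treats $a_1(k)=0$ as a biquadratic $4k^4+(A^2-8B^2)k^2+4B^4=0$, solves for $k^2$, and then extracts square roots; in case (ii) this forces a detour through polar coordinates and half-angle identities to recover $p_1$. Your difference-of-squares factorization $P(k)=(2k^2-\I Ak-2B^2)(2k^2+\I Ak-2B^2)$ bypasses all of that: one quadratic in $k$ directly hands you the two upper half-plane roots in every case, and the transition at $B=A/4$ is read off from the vanishing of a single discriminant. This is a genuine simplification.

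One small correction: your appeal to Proposition~\ref{aj-pr}(iii) to discard the second factor is misstated. The symmetry $a_1(k)=\bar a_1(-\bar k)$ sends $k\mapsto -\bar k$, which \emph{preserves} $\overline{\C^+}$ and therefore cannot push roots into $\overline{\C^-}$. The correct observation is that the second factor is obtained from the first by $k\mapsto -k$ (equivalently, by conjugating the coefficients), so its roots are the negatives of the first factor's roots and hence lie in $\overline{\C^-}$. Your subsequent case-by-case inspection of imaginary parts already establishes this directly, so the conclusion stands; just replace the symmetry reference with this remark.
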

	\begin{proof}
		Observe that the equation $a_1(k)=0$, see \eqref{spf-s},
		is equivalent to the following 
		biquadratic equation:
		\begin{equation*}
			4k^4+\left(A^2-8B^2\right)k^2+4B^4=0,\quad k\neq\pm B.
		\end{equation*}
		Thus, zeros of $a_1(k)$ can be found as follows:
		\begin{equation}\label{roots}
			k=\frac{\sqrt{2}}{4}\left(
			8B^2-A^2\pm A\left(
			A^2-16B^2
			\right)^{1/2}
			\right)^{1/2},\quad k\neq\pm B.
		\end{equation}
		
		Notice that in the case (i) we have that
		\begin{equation}\label{in-r}
			8B^2-A^2\pm A\sqrt{A^2-16B^2}<0,
		\end{equation}
		which follows from the inequalities
		$8B^2-A^2<0$ and $B\neq0$.
		Combining \eqref{roots} and \eqref{in-r}, we obtain 
		that
		\begin{equation*}
		\begin{split}
			k_j&=\frac{\sqrt{2}}{4}\sqrt{
				A^2-8B^2+(-1)^j A\sqrt{A^2-16B^2}},
				%\\&=\frac{\sqrt{2}}{4}\sqrt{
				%\frac{1}{2}
				%\left(A+(-1)^j\sqrt{A^2-16B^2}\right)^2
				%}
				\quad j=1,2,
		\end{split}
		\end{equation*}
		which is equivalent to item (i).

		To prove item (ii), we observe that \eqref{roots} has the following form for $B>\frac{A}{4}$:
		\begin{equation}\label{r3}
			k=\frac{\sqrt{2}}{4}\left(
			8B^2-A^2\pm\I A\sqrt{16B^2-A^2}
			\right)^{1/2}=Be^{\pm\I\frac{\phi_1}{2}+\pi\I n},\quad n\in\Z,
		\end{equation}
		where $\phi_1$ is given by
		\begin{equation}\label{phi1}
			\cos\phi_1=\frac{8B^2-A^2}{8 B^2},\quad
			\sin\phi_1=A\frac{\sqrt{16B^2-A^2}}{8 B^2}.
		\end{equation}
		Using \eqref{phi1}, we conclude that 
		$$
		\cos^2\frac{\phi_1}{2}=\frac{1+\cos\phi_1}{2}
		=\frac{16B^2-A^2}{16B^2},\quad
		\sin^2\frac{\phi_1}{2}=\frac{1-\cos\phi_1}{2}=\frac{A^2}{16B^2},
		$$
		which, together with \eqref{r3}, yields item (ii).
		
		Finally, item (iii) follows directly from \eqref{roots}.
	\end{proof}
	
	%In \cite{XF23} the authors focus on the asymptotic analysis of the initial data close, in a sense, to the step-like initial data \eqref{sidR} with $R=0$.
	%Here we consider a more general case $R\geq 0$ and show how

\subsection{Assumptions on the spectral data}\label{Asssd}
Section \ref{ps-id} studies the spectral data associated to 
to the pure oscillating step initial value \eqref{sidR}.
From \eqref{spf-s} we have that $a_2(k)$ does not have zeros in $\overline{\C^-}$, while Proposition \ref{a_1s} yields that
$a_1(k)$ admits three scenarios of the behavior of zeros in $\overline{\C^+}\setminus\{-B,B\}$ depending on the values of $A$ and $B$:
two simple purely imaginary zeros at $k=\I k_j$, $j=1,2$, with $0<k_1<k_2$,
two simple zeros at $k=p_1$ and $k=-\bar{p}_1$, where 
$\mathrm{Re}\,p_1<0$ and $\mathrm{Im}\,p_1>0$, or
one purely imaginary zero of multiplicity two at $k=\I\ell_1$, $\ell_1>0$.
Motivated by these findings, we will consider
three different classes of initial data,
which, in spectral terms, fall into one of the following three cases:

\begin{description}[itemindent=-\parindent]
	\item[Case I] $a_1(k)$ has two simple purely imaginary zeros in $\overline{\C^+}\setminus\{-B,B\}$ at $k=\I k_1$ and $k=\I k_2$, where $0<k_1<k_2$,
	while $a_2(k)$ has no zeros in $\overline{\C^-}$;
	
	\item[Case II] $a_1(k)$ has two simple zeros in $\overline{\C^+}\setminus\{-B,B\}$ at $k=p_1$ and $k=-\bar{p}_1$
	with $\mathrm{Re}\,p_1<0$ and $\mathrm{Im}\,p_1>0$,
	while $a_2(k)$ has no zeros in $\overline{\C^-}$;
	
	\item[Case III] $a_1(k)$ has one double zero in $\overline{\C^+}\setminus\{-B,B\}$ at $k=\I\ell_1$, $\ell_1>0$,
	and
	$a_2(k)$ has no zeros in $\overline{\C^-}$.
\end{description}

Recall that the singularity rate of $a_1(k)$
near the points $k=\pm B$ is qualitatively different for $a_2(\pm B)\neq0$ and
$a_2(\pm B)=0$,
see \eqref{a_1+-B}.
This motivates us 
to specify three more cases, where
the spectral function $a_1(k)$ satisfies the same assumptions on zeros as in
Cases I--III, while $a_2(k)$ has simple zeros at $k=\pm B$.
Also, observing that \eqref{detR} yields
\begin{equation}\label{b(B)}
	\lim\limits_{k\to B}(k\mp B)a_1(k)=\frac{1-b^2(\pm B)}{a_2^\prime(\pm B)},
	\quad \mbox{if }\,a_2(\pm B)=0,\,\,
	a_2^\prime(\pm B)\neq0,
\end{equation}
we make an assumption $b(B)\neq \pm 1$ (recall that $b(B)=\bar{b}(-B)$).
Thus, in addition to Cases I--III described above, we consider the following associated scattering data:
\begin{description}[itemindent=-\parindent]
	\item[Case $\widetilde{\mathrm{\mathbf{I}}}$] $a_1(k)$ has two simple purely imaginary zeros in $\overline{\C^+}\setminus\{-B,B\}$ at $k=\I k_1$ and $k=\I k_2$, where $0<k_1<k_2$;
	$a_2(k)$ has two simple zeros in $\overline{\C^-}$ at $k=\pm B$,
	and $b(B)\neq\pm 1$;
	
	\item[Case $\widetilde{\mathrm{\mathbf{II}}}$] $a_1(k)$ has two simple zeros in $\overline{\C^+}\setminus\{-B,B\}$ at $k=p_1$ and $k=-\bar{p}_1$
	with $\mathrm{Re}\,p_1<0$ and $\mathrm{Im}\,p_1>0$; $a_2(k)$ has two simple zeros in $\overline{\C^-}$ at $k=\pm B$,
	and $b(B)\neq\pm 1$;
	
	\item[Case $\widetilde{\mathrm{\mathbf{III}}}$] $a_1(k)$ has one double zero in $\overline{\C^+}\setminus\{-B,B\}$ at $k=\I\ell_1$, $\ell_1>0$;
	$a_2(k)$ has two simple zeros in $\overline{\C^-}$ at $k=\pm B$,
	and $b(B)\neq\pm 1$.
\end{description}

Exploiting the determinant relation \eqref{detR} and the singular behavior at $k=\pm B$ of $a_1(k)$, see \eqref{a_1+-B},
we show that the functions $a_j(k)$, $j=1,2$, are fully determined by the spectral function $b(k)$ in Cases I--III
(cf.~\cite[Proposition 3]{RS21-DE} and \cite[Proposition 3]{XF23}).
Moreover, the function $b(k)$ itself cannot be taken arbitrary and it must satisfy additional constrains, see \eqref{arg-B} and \eqref{sdj} below.
\begin{proposition}[$a_j(k)$ in terms of $b(k)$, Cases I--III]
	\label{za1a2}
	Suppose that the spectral functions $a_j(k)$, $j=1,2$, satisfy assumptions described in either 
	Case $\mathrm{I}$, $\mathrm{II}$, or $\mathrm{III}$.
	Introduce, in terms of $b(k)$, the constants 
	$\varphi_j$, $j=1,2$, as follows:
	\begin{equation}\label{vphi+}
		\varphi_1=\frac{1}{\pi\I}
		\mathrm{v.p.}\int_{-\infty}^\infty
		\frac{\log\left[\left(\frac{\zeta^2-B^2}{\zeta^2+1}\right)^2
			\left(1-b^2(\zeta)\right)\right]}
		{\zeta-B}\,d\zeta,
	\end{equation}
	and
	\begin{equation}\label{vphi2}
		e^{\I\varphi_2}=\frac{(B+\I)^4}{\left(B^2+1\right)^2}
		e^{\I\mathrm{Im}\,\varphi_1},\quad
		\varphi_2\in[0,2\pi).
	\end{equation}
	%\begin{equation}\label{rho}
	%	\rho=1-\frac{\mathrm{Re}\,\left(
	%		(B+\I)^4e^{\I\mathrm{Im}\,\varphi_1}
	%		\right)}
	%	{\left(B^2+1\right)^2},\quad0\leq\rho\leq2.
	%\end{equation}
	Also, we define the following constants in terms of $\varphi_1$ and $\varphi_2$:
	\begin{equation}\label{d1d2}
		\begin{split}
			&d_1=\frac{A}{8}\sqrt{2(1-\cos\varphi_2)}\,
			e^{-\mathrm{Re}\,\frac{\varphi_1}{2}},\\
			&d_2=d_1^2-B^2+\frac{\sqrt{2}AB
				e^{-\mathrm{Re}\,\frac{\varphi_1}{2}}}
			{4\sqrt{1-\cos\varphi_2}}\sin\varphi_2,\quad
			\mbox{for }\,\varphi_2\neq0.
		\end{split}
	\end{equation}
	Then we have that
	\begin{enumerate}[label=(\roman*)]
		\item $\varphi_2\neq0$ in Cases $\mathrm{I}$--$\mathrm{III}$; this condition reads	in terms of $\varphi_1$ as follows:
		\begin{equation}\label{arg-B}
			4\arg(B+\I)+\mathrm{Im}\,\varphi_1\neq 2\pi n,\quad n\in\Z;
		\end{equation}
		
		\item $d_2$ satisfies the following conditions:
		\begin{equation}\label{sdj}
		%\begin{split}
			\mbox{Case $\mathrm{I}$:}\quad 0<d_2<d_1^2,\quad
			\mbox{Case $\mathrm{II}$:}\quad d_2<0,\quad
			\mbox{Case $\mathrm{III}$:}\quad d_2=0;
		%\end{split}
		\end{equation}
		
		\item zeros of $a_1(k)$ are determined as follows
		(recall \eqref{d1d2} and \eqref{sdj}):
		\begin{equation*}
		%\label{za1}
		\begin{split}
			&\mbox{Case $\mathrm{I}$:}\quad 
			k_j=d_1+(-1)^j\sqrt{d_2},\quad j=1,2,\\
			%k_1=d_1-\sqrt{d_2},\quad
			%k_2=d_1+\sqrt{d_2},\\
			&\mbox{Case $\mathrm{II}$:}\quad
			p_1=-\sqrt{-d_2}+\I d_1,\\
			&\mbox{Case $\mathrm{III}$:}\quad
			\ell_1=d_1;
			%\mathrm{Re}\,p_1=-\sqrt{-d_2},\quad
			%\mathrm{Im}\,p_1=d_1.
		\end{split}
		\end{equation*}
		
		\item $a_j(k)$, $j=1,2$, are determined in terms of $b(k)$ as follows
		(the trace formulae):
		\begin{subequations}
		%\label{Trf}
		\begin{align*}
		%\begin{split}
			&\mbox{Case $\mathrm{I}$:}\quad a_1(k)=
			\frac{(k-\I k_1)(k-\I k_2)(k+\I)^2}{\left(k^2-B^2\right)^2}
			e^{\varphi(k)},\quad
			a_2(k)=\frac{(k-\I)^2}{(k-\I k_1)(k-\I k_2)}e^{-\varphi(k)},\\
			&\mbox{Case $\mathrm{II}$:}\quad
			a_1(k)=
			\frac{(k-p_1)(k+ \bar{p}_1)(k+\I)^2}{\left(k^2-B^2\right)^2}
			e^{\varphi(k)},\quad
			a_2(k)=\frac{(k-\I)^2}{(k-p_1)(k+\bar{p}_1)}e^{-\varphi(k)},\\
			&\mbox{Case $\mathrm{III}$:}\quad
			a_1(k)=
			\frac{(k-\I\ell_1)^2(k+\I)^2}{\left(k^2-B^2\right)^2}
			e^{\varphi(k)},\quad
			a_2(k)=\frac{(k-\I)^2}{(k-\I\ell_1)^2}e^{-\varphi(k)},
		%\end{split}
		\end{align*}
	\end{subequations}
		where 
		\begin{equation}\label{vphi}
			\varphi(k)=\frac{1}{2\pi\I}
			\int_{-\infty}^\infty
			\frac{\log\left[\left(\frac{\zeta^2-B^2}{\zeta^2+1}\right)^2
				\left(1-b^2(\zeta)\right)\right]}
			{\zeta-k}\,d\zeta,\quad k\in\C\setminus\R.
		\end{equation}
	\end{enumerate}
	
\end{proposition}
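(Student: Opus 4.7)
My plan is to focus on Case I in full, with Cases II and III following the same scheme but with different renormalizing factors. The heart of the argument is a Sokhotski--Plemelj / trace-formula derivation yielding item (iv); items (i)--(iii) then fall out by matching the double-pole behavior \eqref{a_1+-B} against the trace formula at $k=\pm B$. Introduce
$$
\tilde{a}_1(k) := a_1(k)\,\frac{(k^2-B^2)^2}{(k-\I k_1)(k-\I k_2)(k+\I)^2},
\qquad
\tilde{a}_2(k) := a_2(k)\,\frac{(k-\I k_1)(k-\I k_2)}{(k-\I)^2}.
$$
By Proposition \ref{aj-pr} and the Case I assumptions on zeros, $\tilde{a}_1$ is analytic, nonvanishing, and continuous on $\overline{\C^+}$ with $\tilde a_1(k)\to 1$ as $k\to\infty$, and likewise for $\tilde{a}_2$ on $\overline{\C^-}$. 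The relation $a_1 a_2 = 1 - b^2$ from \eqref{detR} rewrites as
$$
\tilde{a}_1(k)\,\tilde{a}_2(k) = \left(\frac{k^2-B^2}{k^2+1}\right)^2(1-b^2(k)),\qquad k\in\R\setminus\{\pm B\}.
$$
Choosing branches of $\log$ that vanish at infinity, Sokhotski--Plemelj for the Cauchy integral \eqref{vphi} identifies $\log\tilde{a}_1(k)=\varphi(k)$ in $\C^+$ and $-\log\tilde{a}_2(k)=\varphi(k)$ in $\C^-$; solving for $a_1$ and $a_2$ yields the trace formula of item (iv).

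For item (iii), compare the leading behavior of $a_1$ near $k=B$. The trace formula gives
$$
\lim_{k\to B}(k-B)^2 a_1(k) = \frac{(B-\I k_1)(B-\I k_2)(B+\I)^2}{4B^2}\,e^{\varphi^+(B)},
$$
while the singular relation \eqref{a_1+-B}, combined with $a_2(B) = (B-\I)^2 e^{-\varphi^-(B)}/[(B-\I k_1)(B-\I k_2)]$ from the trace formula, gives the same limit as $\tfrac{A^2(B-\I)^2}{16(B-\I k_1)(B-\I k_2)}\,e^{-\varphi^-(B)}$. The Sokhotski--Plemelj identity $\varphi^+(B)+\varphi^-(B)=\varphi_1$ (see \eqref{vphi+}) then produces
$$
[(B-\I k_1)(B-\I k_2)]^2 = \frac{A^2 B^2(B-\I)^2}{4(B+\I)^2}\,e^{-\varphi_1},
$$
with an analogous relation at $k=-B$ in which $\varphi_1$ is replaced by $\overline{\varphi_1}$ (the symmetry $L(-\zeta)=\overline{L(\zeta)}$ of the density in \eqref{vphi+} transforms the principal value by complex conjugation under $B\mapsto -B$). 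Taking square roots---with the sign fixed by continuity from the pure-step example \eqref{spf-s}---and reading off modulus and argument, a short computation using \eqref{vphi2} and \eqref{d1d2} identifies $k_1+k_2=2d_1$ and $k_1 k_2=d_1^2-d_2$; hence $k_j=d_1+(-1)^j\sqrt{d_2}$. Cases II and III require only replacing $(k-\I k_1)(k-\I k_2)$ by $(k-p_1)(k+\bar p_1)$ or $(k-\I\ell_1)^2$ in the renormalization, and yield the same system for $d_1, d_2$.

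The constraints are then consistency conditions. For item (i): if $\varphi_2=0$, then $d_1=0$, which immediately contradicts $k_1>0$ in Case I, $\mathrm{Im}\,p_1>0$ in Case II, and $\ell_1>0$ in Case III. For item (ii), the required zero geometry dictates the sign: two distinct positive-imaginary zeros (Case I) force $0<d_2<d_1^2$; a non-imaginary zero $p_1$ with $\mathrm{Re}\,p_1<0$ (Case II) forces $d_2<0$; a double zero (Case III) forces $d_2=0$. The principal technical obstacle in the outline above is the careful analysis of $\varphi$ near $k=\pm B$: the integrand of \eqref{vphi} has a logarithmic singularity at $\zeta=\pm B$ arising from the factor $(\zeta^2-B^2)^2$, so extracting the finite boundary values $e^{\varphi^\pm(\pm B)}$ requires regularization---explicitly splitting off the contribution $\tfrac{1}{2\pi \I}\int\tfrac{2\log|\zeta^2-B^2|}{\zeta-k}\,d\zeta$, verifying that it generates exactly the $(k^2-B^2)^{-2}$ singularity needed to produce the double pole of $a_1$ at $\pm B$, and identifying the residual (regularized) integral with the principal value $\varphi_1$ in \eqref{vphi+}.
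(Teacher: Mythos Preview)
Your approach is essentially the paper's: the same renormalization $\tilde a_j$, the same scalar Riemann--Hilbert/Plemelj argument for item (iv), and the same strategy of matching the double-pole coefficient \eqref{a_1+-B} against the trace formula at $k=\pm B$ to pin down the zeros. The only difference is algebraic packaging: the paper keeps the squared relations $[(B\mp z_1)(B\mp z_2)]^2=\cdots$, adds and subtracts them, and solves a biquadratic for $(z_1+z_2)^2$, whereas you take a square root directly and read off $k_1+k_2$ and $k_1k_2$ from the real and imaginary parts.

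The one point that needs tightening is your sign selection for that square root. ``Continuity from the pure-step example'' is not a valid principle here: the proposition carries no smallness or perturbative hypothesis on $u_0$, so there is no path along which to run a continuity argument. The sign has to be fixed intrinsically from the zero geometry. In Case~I one has $\mathrm{Im}\big[(B-\I k_1)(B-\I k_2)\big]=-B(k_1+k_2)<0$, and in all three cases the unified constraint is $\mathrm{Im}(z_1+z_2)>0$; this is precisely the condition the paper uses to select the correct root of its biquadratic. Once you replace the continuity appeal by this constraint, your derivation goes through and in fact is slightly more direct than the paper's, since a single complex equation at $k=B$ already encodes both $k_1+k_2$ and $k_1k_2$.
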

\begin{proof}
	\textbf{Step 1.}
	Let us denote zeros of $a_1(k)$ by $z_1$ and $z_2$,
	that is 
	\begin{equation}\label{z1z2}
	\begin{split}
		&(z_1,z_2)=(\I k_1, \I k_2),\,\,
		\mbox{in Case I,}\quad
		(z_1,z_2)=(p_1,-\bar{p}_1),\,\,
		\mbox{in Case II,}\\
		&z_1=z_2=\I\ell_1,\,\,\mbox{in Case III}.
	\end{split}
	\end{equation}
	Notice that in Cases I--III we have:
	\begin{equation}\label{Iz1z2}
		(z_1+z_2)\in\I\R,\quad\mathrm{Im}\,(z_1+z_2)>0.
	\end{equation}
	Introduce the following functions (recall \eqref{a_1+-B}):
	\begin{equation}\label{daj-til}
		\tilde{a}_1(k)=
		\frac{\left(k^2-B^2\right)^2}{(k-z_1)(k-z_2)(k+\I)^2}a_1(k),\quad
		\tilde{a}_2(k)=
		\frac{(k-z_1)(k-z_2)}{(k-\I)^2}a_2(k).
	\end{equation}
	Then the determinant relation, see Proposition \ref{aj-pr}, item (iv),
	can be viewed as a scalar Riemann-Hilbert problem for $\tilde{a}_1(k)$
	and $\tilde{a}_2^{-1}(k)$ with the jump across $\R$ oriented from left to right:
	\begin{equation*}
		\begin{split}
			&\tilde{a}_1(k)=\tilde{a}_2^{-1}(k)
			\frac{\left(k^2-B^2\right)^2}{\left(k^2+1\right)^2}
			\left(1-b^2(k)\right),\quad k\in\R,\\
			&\tilde{a}_j(k)\to 1,\quad j=1,2,\quad k\to\infty.
		\end{split}
	\end{equation*}
	Applying the Plemelj-Sokhotski formula, we obtain the following representations for $\tilde{a}_j(k)$, $j=1,2$:
	\begin{equation}\label{aj-til}
		\tilde{a}_1(k)=e^{\varphi(k)},\quad k\in\C^+,
		\quad\text{and}\quad
		\tilde{a}_2(k)=e^{-\varphi(k)},\quad k\in\C^-,
	\end{equation}
	where $\varphi(k)$ is given in \eqref{vphi}.

	\medskip
	
	\textbf{Step 2.}
	Equations \eqref{daj-til} and \eqref{aj-til}
	imply the following behavior 
	of $a_1(k)$ as $k\to\pm B$:
	\begin{equation}\label{pa_1+-B}
		a_1(k)=\frac{(B\mp z_1)(B\mp z_2)(B\pm \I)^2}
		{4B^2(k\mp B)^2}e^{\varphi(\pm B+\I0)}
		+O\left((k\mp B)^{-1}\right),\quad
		k\to\pm B,\quad k\in \overline{\C^{+}},
	\end{equation}
	as well as that
	\begin{equation}\label{pa_2+-B}
		a_2(\pm B)=\frac{(B\mp\I)^2}{(B\mp z_1)(B\mp z_2)}
		e^{-\varphi(\pm B-\I0)}.
	\end{equation}
	Combining \eqref{a_1+-B}, \eqref{pa_1+-B}, and \eqref{pa_2+-B},
	we obtain the following system of equations for $z_j$, $j=1,2$:
	\begin{equation}\label{syseq}
		\begin{split}
			&(B-z_1)^2(B-z_2)^2=
			\frac{A^2B^2(B-\I)^4}{4\left(B^2+1\right)^2}e^{-\varphi_1},\\
			&(B+z_1)^2(B+z_2)^2=
			\frac{A^2B^2(B+\I)^4}{4\left(B^2+1\right)^2}e^{-\tilde{\varphi}_1},
		\end{split}
	\end{equation}
	where, as per \eqref{vphi+} (see \eqref{vphi})
	\begin{equation}\label{tvp}
		\begin{split}
			&\varphi_1=\varphi(B+\I0)+\varphi(B-\I0)=\frac{1}{\pi\I}
			\mathrm{v.p.}\int_{-\infty}^\infty
			\frac{\log\left[\left(\frac{\zeta^2-B^2}{\zeta^2+1}\right)^2
				\left(1-b^2(\zeta)\right)\right]}
			{\zeta-B}\,d\zeta,\\
			&\tilde{\varphi}_1=\varphi(-B+\I0)+\varphi(-B-\I0)=\frac{1}{\pi\I}
			\mathrm{v.p.}\int_{-\infty}^\infty
			\frac{\log\left[\left(\frac{\zeta^2-B^2}{\zeta^2+1}\right)^2
				\left(1-b^2(\zeta)\right)\right]}
			{\zeta+B}\,d\zeta.
		\end{split}
	\end{equation}
	The symmetry condition $\bar{b}(k)=b(-k)$
	(see Proposition \ref{aj-pr}, item (iii)) yields that
	\begin{equation}\label{svphi}
		\tilde{\varphi}_1=\bar\varphi_1.
	\end{equation}
	
	Adding and subtracting equations in \eqref{syseq}, we arrive at the following system (recall \eqref{svphi}):
	\begin{equation}\label{sz1z2}
		\begin{split}
			&\left(B^2+z_1z_2\right)^2+B^2(z_1+z_2)^2
			=\frac{A^2B^2C_+}{8(B^2+1)^2},\\
			&(z_1+z_2)(B^2+z_1z_2)
			=\frac{A^2BC_-}{16\left(B^2+1\right)^2},
		\end{split}
	\end{equation}
	with
	\begin{equation}\label{C+-}
		C_\pm=(B+\I)^4e^{-\bar\varphi_1}\pm(B-\I)^4e^{-\varphi_1}.
	\end{equation}
	Since $(z_1+z_2)\neq 0$, see \eqref{z1z2}, we can express $(B^2+z_1z_2)$ in terms of $(z_1+z_2)^{-1}$ from the second equation in \eqref{sz1z2}.
	Thus, the first equation in \eqref{sz1z2} yields the following biquadratic equation for $(z_1+z_2)$:
	\begin{equation}\label{biz1z2}
		(z_1+z_2)^4-
		\frac{A^2C_+}{8(B^2+1)^2}(z_1+z_2)^2
		+\frac{A^4C_-^2}{256\left(B^2+1\right)^4}=0.
	\end{equation}
	Using that (recall \eqref{C+-})
	$$
	C_+^2-C_-^2=4\left(B^2+1\right)^4e^{-2\mathrm{Re}\,\varphi_1},
	$$
	we conclude that the discriminant of \eqref{biz1z2} is equal to
	$\frac{A^4}{16}e^{-2\mathrm{Re}\,\varphi_1}$.
	Taking into account \eqref{Iz1z2} and that
	$$
	C_+=2\mathrm{Re}\left((B+\I)^4e^{-\bar{\varphi}_1}\right)
	=2e^{-\mathrm{Re}\,\varphi_1}
	\mathrm{Re}\left((B+\I)^4e^{\I\mathrm{Im}\,\varphi_1}\right),
	$$
	we obtain the following expression for $(z_1+z_2)^2$:
	\begin{equation}\label{z1+z2}
		(z_1+z_2)^2=\frac{A^2}{8}e^{-\mathrm{Re}\,\varphi_1}
		\left(
		\frac{\mathrm{Re}\left((B+\I)^4e^{\I\mathrm{Im}\,\varphi_1}\right)}
		{(B^2+1)^2}-1\right)
		=\frac{A^2}{8}
		(\cos\varphi_2-1)e^{-\mathrm{Re}\,\varphi_1},
	\end{equation}
	where we have used \eqref{vphi2}. 
	%Observing that $\frac{(B+\I)^4}{\left(B^2+1\right)^2}
	%=\frac{(B+\I)^2}{(B-\I)^2}$, we conclude that 
	%$\left|\frac{(B+\I)^4}{\left(B^2+1\right)^2}\right|=1$ and
	%therefore $\rho\geq0$.
	\medskip
	
	\textbf{Proof of item (i).}
	Applying \eqref{Iz1z2}, equation \eqref{z1+z2} implies that
	$\cos\varphi_2\neq1$ (see \eqref{vphi2}):
	\begin{equation}
		\label{b-r}
		\frac{\mathrm{Re}\,\left(
			(B+\I)^4e^{\I\mathrm{Im}\,\varphi_1}\right)}
		{\left(B^2+1\right)^2}\neq 1.
	\end{equation}
	Since $\left|\frac{\mathrm{Re}\,\left(
		(B+\I)^4e^{\I\mathrm{Im}\,\varphi_1}\right)}
	{\left(B^2+1\right)^2}\right|=1$, inequality \eqref{b-r} yields
	\begin{equation*}
		\frac{(B+\I)^2}
		{B^2+1}e^{\I\frac{\mathrm{Im}\,\varphi_1}{2}}\neq\pm1,
	\end{equation*}
	which is equivalent to \eqref{arg-B}.
	\medskip
	
	\textbf{Proof of items (ii) and (iii) in Case I.}
	Since $(z_1+z_2)=\I(k_1+k_2)$, we obtain from \eqref{z1+z2} that
	\begin{equation}\label{k1+k2}
		k_1+k_2=2d_1,
	\end{equation}
	where $d_1$ is given in \eqref{d1d2}.
	Using that $z_1z_2=-k_1k_2$ and (see \eqref{C+-})
	$$
	C_-=2\I\mathrm{Im}\left((B+\I)^4e^{-\bar{\varphi}_1}\right)
	=2\I e^{-\mathrm{Re}\,\varphi_1}
	\mathrm{Im}\left((B+\I)^4e^{\I\mathrm{Im}\,\varphi_1}\right),
	$$
	we obtain from the second equation in \eqref{sz1z2} the following expression for the product $k_1k_2$ (see \eqref{vphi2}):
	\begin{equation}\label{k1k2}
		\begin{split}
			k_1k_2&=B^2-\frac{A^2Be^{-\mathrm{Re}\,\varphi_1}}
			{16d_1(B^2+1)^2}
			\mathrm{Im}\left((B+\I)^4e^{\I\mathrm{Im}\,\varphi_1}\right)\\
			&=B^2-\frac{ABe^{-\mathrm{Re}\,\frac{\varphi_1}{2}}}
			{2\sqrt{2(1-\cos\varphi_2)}}\sin\varphi_2
			=d_1^2-d_2,
		\end{split}
	\end{equation}
	where $d_j$, $j=1,2$, are given in \eqref{d1d2}.
	Combining \eqref{k1+k2} and \eqref{k1k2}, we arrive at the following quadratic equation for $k_1$:
	\begin{equation}\label{qk1}
		k_1^2-2d_1k_1+d_1^2-d_2=0.
	\end{equation}
	Taking into account that equation \eqref{qk1} must have two distinct strictly positive zeros, we have items (ii,iii) in Case I.
	\medskip
	
	\textbf{Proof of items (ii) and (iii) in Case II.}
	Using that $z_1+z_2=2\I\mathrm{Im}\,p_1$, equation \eqref{z1+z2} immediately implies that 
	\begin{equation}\label{Ip1}
		\mathrm{Im}\,p_1=d_1.
	\end{equation}
	Arguing similarly as in \eqref{k1k2}, we obtain from the second equation in \eqref{sz1z2} that 
	$|p_1|^2=d_1^2-d_2$ which, together with \eqref{Ip1},
	yields items (ii,iii) in Case II.
	\medskip
	
	\textbf{Proof of items (ii) and (iii) in Case III.}
	Observing that $z_1+z_2=2\I\ell_1$, equation \eqref{z1+z2} yields 
	item (iii) in Case III.
	Substituting $z_1=z_2=\I d_1$ into the second equation in \eqref{sz1z2} and
	arguing as in \eqref{k1k2}, we obtain item (ii) in Case III.
	\medskip 
	
	\textbf{Proof of item (iv).}
	Combining \eqref{z1z2}, \eqref{daj-til}, and \eqref{aj-til}, we arrive at item (iv) of the proposition.
	\end{proof}
	
	\begin{remark}\label{Rps}
	Let us verify Proposition \ref{za1a2} for the spectral functions \eqref{spf-s} associated to the pure oscillating step initial \eqref{sidR}.
	%(recalling Proposition \ref{a_1s}, Cases I and II correspond to $0<B<\frac{A}{4}$ and $B>\frac{A}{4}$, respectively).
	%Introducing $\varphi(k)$ as in \eqref{vphi}, we will be able to calculate 
	%$\varphi_1$, see \eqref{vphi+}, in terms of $\varphi(B\pm\I0)$ as in
	%the first equation in \eqref{tvp}.
	Observe that (see \eqref{spf-s} and \ref{vphi})
	\begin{equation}\label{b1e}
		\left(\frac{\zeta^2-B^2}{\zeta^2+1}\right)^2
		\left(1-b^2(\zeta)\right)
		=\frac{4k^4+(A^2-8B^2)k^2+4B^4}{4(k^2+1)^2}.
	\end{equation}
	The
	polynomial $4k^4+(A^2-8B^2)k^2+4B^4$ can be written as follows
	(see the proof of Proposition \ref{a_1s}):
	\begin{equation}\label{biq}
		4k^4+(A^2-8B^2)k^2+4B^4=4(k-z_1)(k-z_2)(k-\bar{z}_1)(k-\bar{z}_2),
	\end{equation}
	where $(z_1,z_2)$ are defined by \eqref{z1z2} 
	with $k_j$, $j=1,2$, and $p_1$ given as 
	in \eqref{kj} and \eqref{p1}, respectively, and 
	with $\ell_1=\frac{A}{4}$.
	
	%Consider $\varphi(k)$ as in \eqref{vphi}.
	Combining \eqref{vphi}, \eqref{b1e}, \eqref{biq},
	and using the Plemelj-Sokhotski formula,
	we conclude that
	$e^{\varphi(k)}$ satisfies the following scalar Riemann-Hilbert problem:
	\begin{equation}\label{RHsiv}
		\begin{split}
			&\left(e^{\varphi(k)}\right)_+=
			\left(e^{\varphi(k)}\right)_-
			\frac{(k-z_1)(k-z_2)(k-\bar{z}_1)(k-\bar{z}_2)}
			{(k^2+1)^2},\quad k\in\R,\\
			&e^{\varphi(k)}\to1,\quad k\to\infty.
		\end{split}
	\end{equation}	
	
	Direct computations show that a unique solution of the Riemann-Hilbert problem \eqref{RHsiv} reads as follows:
	\begin{equation}\label{vphs}
		e^\varphi(k)=
		\begin{cases}
			\frac{(k-\bar{z}_1)(k-\bar{z}_2)}{(k+\I)^2},&k\in\C^+,\\
			\frac{(k-\I)^2}{(k-z_1)(k-z_2)},&k\in\C^-.
		\end{cases}
	\end{equation}
	Taking into account that (see \eqref{kj}, \eqref{p1}, and \eqref{z1z2})
	\begin{equation}\label{Bz1}
		(B-z_1)(B-z_2)=-\I\frac{AB}{2},
		\quad\mbox{in Cases $\mathrm{I}$--$\mathrm{III}$},
	\end{equation}
	and using the first equation in \eqref{tvp}, \eqref{vphs}, and \eqref{Bz1}, we arrive at the following representation for $e^{\varphi_1}$:
	\begin{equation}\label{vph1s}
		e^{\varphi_1}=
		e^{\varphi(B+\I0)}e^{\varphi(B-\I0)}=
		-\frac{(B-\I)^4}{(B^2+1)^2},
		\quad\mbox{in Cases $\mathrm{I}$--$\mathrm{III}$}.
	\end{equation}
	
	Equation \eqref{vph1s} yields that $\mathrm{Re}\,\varphi_1=0$.
	Thus, combining \eqref{vph1s} and \eqref{vphi2}, we conclude that $\varphi_2=\pi$.
	Using \eqref{d1d2}, we obtain the following expressions for $d_j$, $j=1,2$:
	$$
	d_1=\frac{A}{4},\quad d_2=\frac{A^2}{16}-B^2.
	$$
	Recalling that $0<B<\frac{A}{4}$ in Case $\mathrm{I}$,
	$B>\frac{A}{4}$ in Case $\mathrm{II}$, 
	and $B=\frac{A}{4}$ in Case $\mathrm{III}$, we establish the validity of 
	Proposition \ref{za1a2} for the pure oscillating step initial data. 
	
	\end{remark}

Applying similar arguments as in Proposition \ref{za1a2},
we derive expressions for the spectral functions $a_j(k)$, $j=1,2$,
particularly for their zeros,
in terms of $b(k)$ in 
Cases $\widetilde{\mathrm{I}}$--$\widetilde{\mathrm{III}}$.

%As in Proposition \ref{za1a2}, $b(k)$ satisfies additional conditions here, see \eqref{} below.

\begin{proposition}[$a_j(k)$ in terms of $b(k)$ in Cases $\widetilde{\mathrm{I}}$--$\widetilde{\mathrm{III}}$]
	\label{tza1}
Suppose that the spectral functions $a_j(k)$, $j=1,2$, and $b(k)$ satisfy 
assumptions described in either Case $\widetilde{\mathrm{I}}$,
$\widetilde{\mathrm{II}}$, or $\widetilde{\mathrm{III}}$.
Introduce, in terms of $b(k)$, the following constants:
\begin{equation}\label{E+-}
	E_{\pm}=\I\frac{AB}{2E_1E_2}\left(
	b(B)\pm\sqrt{E_1^2+b^2(B)}\right),
\end{equation}
with (recall that $b(B)\neq\pm1$ in 
Cases $\widetilde{\mathrm{I}}$--$\widetilde{\mathrm{III}}$ by assumption)
\begin{equation}\label{E12}
	E_1=\exp\left(\frac{1}{2\pi\I}
	\mathrm{v.p.}\int_{-\infty}^\infty
	\frac{\log\left(1-b^2(\zeta)\right)}
	{\zeta-B}\,d\zeta\right),\quad
	E_2=\exp\left(\frac{1}{2}\log\left(1-b^2(B)\right)\right).
\end{equation}
Then we have that
\begin{enumerate}[label=(\roman*)]
	\item either $E_+$, $E_-$, or both $E_+$ and $E_-$ 
	satisfy the following two conditions
	in Cases $\widetilde{\mathrm{I}}$--$\widetilde{\mathrm{III}}$:
	\begin{subequations}
		\label{E+-c}
	\begin{align}
		\label{E+-ca}
		&\mbox{Case $\widetilde{\mathrm{I}}$:}
		\quad 0<B^2-\mathrm{Re}\,E_\pm<
		\frac{\mathrm{Im}^2\,E_\pm}{4B^2}\,
		\mbox{ and }\,\mathrm{Im}\,E_\pm<0,\\
		&\mbox{Case $\widetilde{\mathrm{II}}$:}
		\quad \frac{\mathrm{Im}^2\,E_\pm}{4B^2}<B^2-\mathrm{Re}\,E_\pm\,
		\mbox{ and }\,\mathrm{Im}\,E_\pm<0,\\
		&\mbox{Case $\widetilde{\mathrm{III}}$:}
		\quad \frac{\mathrm{Im}^2\,E_\pm}{4B^2}=B^2-\mathrm{Re}\,E_\pm\,
		\mbox{ and }\,\mathrm{Im}\,E_\pm<0;
	\end{align}
	\end{subequations}
%	We call constants $E_\pm$ admissible if they satisfy conditions 
%	\eqref{E+-c};
	
	\item zeros of $a_1(k)$ have the following representation
	(equations below are fulfilled with either $E_+$ or $E_-$ in place of $E_\pm$; this constant, used in these expressions, must satisfy \eqref{E+-c} in the corresponding case):
	\begin{equation*}
	\begin{split}
		&\mbox{Case $\widetilde{\mathrm{I}}$:}\quad
		k_j=-\frac{\mathrm{Im}\,E_\pm}{2B}
		+(-1)^j\sqrt{\frac{\mathrm{Im}^2\,E_\pm}{4B^2}
		+\mathrm{Re}\,E_\pm-B^2},\quad j=1,2,\\
		&\mbox{Case $\widetilde{\mathrm{II}}$:}\quad
		p_1=-\sqrt{B^2-\mathrm{Re}\,E_\pm-\frac{\mathrm{Im}^2\,E_\pm}{4B^2}}
			-\I \frac{\mathrm{Im}\,E_\pm}{2B},\\
		&\mbox{Case $\widetilde{\mathrm{III}}$:}\quad
		\ell_1=-\frac{\mathrm{Im}\,E_\pm}{2B};
	\end{split}
	\end{equation*}
	
	\item $a_j(k)$, $j=1,2$, are determined in terms of $b(k)$ as follows
	(the trace formulae):
	\begin{equation*}
		%\label{tTrf}
		\begin{split}
			&\mbox{Case $\widetilde{\mathrm{I}}$:}\quad a_1(k)=
			\frac{(k-\I k_1)(k-\I k_2)}{k^2-B^2}
			e^{\psi(k)},\quad
			a_2(k)=\frac{k^2-B^2}{(k-\I k_1)(k-\I k_2)}e^{-\psi(k)},\\
			&\mbox{Case $\widetilde{\mathrm{II}}$:}\quad
			a_1(k)=
			\frac{(k-p_1)(k+ \bar{p}_1)}{k^2-B^2}
			e^{\psi(k)},\quad
			a_2(k)=\frac{k^2-B^2}{(k-p_1)(k+\bar{p}_1)}e^{-\psi(k)},\\
			&\mbox{Case $\widetilde{\mathrm{III}}$:}\quad
			a_1(k)=
			\frac{(k-\I\ell_1)^2}{k^2-B^2}
			e^{\psi(k)},\quad
			a_2(k)=\frac{k^2-B^2}{(k-\I\ell_1)^2}e^{-\psi(k)},
		\end{split}
	\end{equation*}
	where 
	\begin{equation}
		\label{vpsi}
		\psi(k)=\frac{1}{2\pi\I}
		\int_{-\infty}^\infty
		\frac{\log\left(1-b^2(\zeta)\right)}
		{\zeta-k}\,d\zeta,\quad k\in\C\setminus\R.
	\end{equation}
\end{enumerate}
\end{proposition}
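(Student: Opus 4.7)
The plan is to follow the same scheme as in the proof of Proposition \ref{za1a2}, with modifications dictated by the fact that $a_1(k)$ now has only \emph{simple} poles at $k=\pm B$ (since $a_2(\pm B)=0$ collapses the leading $(k\mp B)^{-2}$ term in \eqref{a_1+-B}).  First I would introduce the regularised functions
\begin{equation*}
\tilde a_1(k)=\frac{k^2-B^2}{(k-z_1)(k-z_2)}a_1(k),\quad k\in\overline{\C^+},\qquad
\tilde a_2(k)=\frac{(k-z_1)(k-z_2)}{k^2-B^2}a_2(k),\quad k\in\overline{\C^-},
\end{equation*}
where $z_1,z_2$ denote the zeros of $a_1$ (with multiplicity) in $\overline{\C^+}\setminus\{-B,B\}$.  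Unlike in Proposition \ref{za1a2}, only a single factor of $k^2-B^2$ is needed.  By construction $\tilde a_j(k)\to 1$ as $k\to\infty$, and the determinant relation \eqref{detR} gives the scalar jump $\tilde a_1(k)=\tilde a_2^{-1}(k)(1-b^2(k))$ on $\R$.  Applying the Plemelj--Sokhotski formula in additive form yields $\tilde a_1(k)=e^{\psi(k)}$ in $\C^+$ and $\tilde a_2(k)=e^{-\psi(k)}$ in $\C^-$, with $\psi$ as in \eqref{vpsi}; substituting back gives the trace formulae of item (iii).  In particular, the constants $E_1,E_2$ from \eqref{E12} are the boundary values $e^{\psi(B+\I 0)}=E_1E_2$ and $e^{\psi(B-\I 0)}=E_1/E_2$.

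Next I would derive a quadratic equation for $(B-z_1)(B-z_2)$ by matching physical and trace-formula expressions at $k=B$.  On the physical side, expanding the Volterra equations \eqref{Psi1}--\eqref{Psi2} to one order beyond what is done in the proof of item (vi) of Proposition \ref{pPsi} supplies the next Taylor coefficients of $\Psi_1^{(1)}$, $\Psi_2^{(2)}$, $\Psi_1^{(2)}$ and $\Psi_2^{(1)}$ at $k=B$.  Inserting these into \eqref{sda}, \eqref{sda2} and \eqref{sdb} expresses $\mathrm{Res}_{k=B}a_1$, $a_2'(B)$ and $b(B)$ as linear combinations of the same finite set of scalars, producing identities between them.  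Equating the result with the trace-formula expressions
\begin{equation*}
\mathrm{Res}_{k=B}a_1=\frac{(B-z_1)(B-z_2)\,E_1E_2}{2B},\qquad
a_2'(B)=\frac{2BE_2}{(B-z_1)(B-z_2)\,E_1},
\end{equation*}
together with the consistency identity $\mathrm{Res}_{k=B}a_1\cdot a_2'(B)=1-b^2(B)$ coming from \eqref{b(B)}, leaves a single quadratic in $(B-z_1)(B-z_2)$ whose two roots are precisely the constants $E_\pm$ of \eqref{E+-}.

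Finally, the symmetry $a_1(k)=\bar a_1(-\bar k)$ (item (iii) of Proposition \ref{aj-pr}) transfers the information at $k=B$ to $k=-B$, giving $(B+z_1)(B+z_2)=\overline{E}$ for the same $E\in\{E_+,E_-\}$.  Consequently, the unordered pair $\{z_1,z_2\}$ is the root set of
\begin{equation*}
z^2-\frac{\overline{E}-E}{2B}\,z+\left(\mathrm{Re}\,E-B^2\right)=0,
\end{equation*}
and the explicit formulae in item (ii) then follow by reparametrisation: $z_j=\I k_j$ in Case $\widetilde{\mathrm{I}}$, $(z_1,z_2)=(p_1,-\bar p_1)$ in Case $\widetilde{\mathrm{II}}$, and $z_1=z_2=\I\ell_1$ in Case $\widetilde{\mathrm{III}}$.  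The admissibility conditions \eqref{E+-c} of item (i) are then equivalent to requiring that the roots of this quadratic have the real/complex structure and sign of imaginary part stipulated by the respective case.  The main technical difficulty is the matching step: item (vi) of Proposition \ref{pPsi} only provides the leading singular behaviour of the Jost columns near $\pm B$, so obtaining the quadratic requires a careful one-order-higher expansion of the Volterra equations \eqref{Psi1}--\eqref{Psi2}, together with the accompanying bookkeeping for the symmetry-partner columns.
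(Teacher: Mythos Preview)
Your proposal is essentially the same strategy as the paper's. The paper also regularises via $\check a_1(k)=\frac{k^2-B^2}{(k-z_1)(k-z_2)}a_1(k)$ and $\check a_2(k)=\frac{(k-z_1)(k-z_2)}{k^2-B^2}a_2(k)$, solves the scalar jump by Plemelj--Sokhotski to obtain item (iii), then expands the Jost columns one order beyond Proposition \ref{pPsi}(vi) to derive the key local identity
\[
\underset{k=B}{\operatorname{Res}}\,a_1(k)=\I\frac{A}{2}\left(b(B)+\I\frac{A}{8}a_2'(B)\right),
\]
and substitutes the trace-formula values of $\underset{k=B}{\operatorname{Res}}\,a_1$ and $a_2'(B)$ to obtain the quadratic in $(B-z_1)(B-z_2)$ with roots $E_\pm$. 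Items (i)--(ii) then follow case by case exactly as you outline.

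One small point to tighten: the ``consistency identity'' $\underset{k=B}{\operatorname{Res}}\,a_1\cdot a_2'(B)=1-b^2(B)$ from \eqref{b(B)} is \emph{automatically} satisfied by the trace formulae (it is just the determinant relation \eqref{detR} rewritten), so it carries no new information and cannot by itself produce the quadratic. The quadratic comes solely from inserting the trace-formula expressions into the physical relation obtained from the higher-order Jost expansion; you should present it that way. Also, your appeal to the symmetry $a_1(k)=\bar a_1(-\bar k)$ to get $(B+z_1)(B+z_2)=\overline{E}$ is fine but redundant: in each case the assumed structure of $(z_1,z_2)$ already forces $(B+z_1)(B+z_2)=\overline{(B-z_1)(B-z_2)}$, which is how the paper extracts the real and imaginary parts directly from $(B-z_1)(B-z_2)=E_\pm$ alone.
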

\begin{proof}
	\textbf{Step 1.}
	Proposition \ref{pPsi}, item (vi) implies that $\Psi_1$ has the following behavior as $k\to B$:
	\begin{equation}\label{Psi1B}
	\begin{split}
		&\Psi_1^{(1)}(x,t,k)=\frac{1}{k-B}
		\begin{pmatrix}v_1(x,t)\\v_2(x,t)\end{pmatrix}+
		\begin{pmatrix}s_1(x,t)\\s_2(x,t)\end{pmatrix}+
		O(k-B),
		\quad k\to B,\\
		&\Psi_1^{(2)}(x,t,k)=\frac{4\I}{A}
		e^{-2\I Bx-8\I B^3t}
		\begin{pmatrix}v_1(x,t)\\v_2(x,t)\end{pmatrix}
		+(k-B)\begin{pmatrix}h_1(x,t)\\h_2(x,t)\end{pmatrix}
		+O\left((k-B)^2\right),\quad
		k\to B,
	\end{split}
	\end{equation}
	with some functions $s_j(x,t)$ and $h_j(x,t)$, $j=1,2$.
	Using the first symmetry relation in \eqref{Psi-sym} as well as recalling item (vi) of Proposition \ref{pPsi}, we conclude that
	$\Psi_2$ admits the following expansion as $k\to B$:
	\begin{equation}\label{Psi2B}
	\begin{split}
		&\Psi_2^{(1)}(x,t,k)=\frac{4\I}{A}
		e^{2\I Bx+8\I B^3t}
		\begin{pmatrix}v_2(-x,-t)\\v_1(-x,-t)\end{pmatrix}
		+(k-B)\begin{pmatrix}h_2(-x,-t)\\h_1(-x,-t)\end{pmatrix}
		+O\left((k-B)^2\right),
		\quad k\to B,\\
		&\Psi_2^{(2)}(x,t,k)=\frac{1}{k-B}
		\begin{pmatrix}v_2(-x,-t)\\v_1(-x,-t)\end{pmatrix}
		+\begin{pmatrix}s_2(-x,-t)\\s_1(-x,-t)\end{pmatrix}
		+O(k-B),
		\quad k\to B.
	\end{split}
	\end{equation}
	Combining \eqref{sd}, \eqref{Psi1B}, and \eqref{Psi2B}, we arrive at the following
	approximations for $a_j(k)$, $j=1,2$, and $b(k)$:
	\begin{subequations}
		\label{a1a2bB}
	\begin{align}
	%\begin{split}
		&a_1(k)=\frac{1}{(k-B)^2}\left(v_1^2-v_2^2\right)(0,0)
		+\frac{2}{k-B}(v_1s_1-v_2s_2)(0,0)+O(1),
		\quad k\to B,\\
		&a_2(k)=\frac{16}{A^2}\left(v_1^2-v_2^2\right)(0,0)
		+\frac{8\I}{A}(k-B)(v_2h_2-v_1h_1)(0,0)
		+O\left((k-B)^2\right),\quad k\to B,\\
		&b(k)=\frac{4\I}{A(k-B)}\left(v_2^2-v_1^2\right)(0,0)
		+\frac{4\I}{A}(v_2s_2-v_1s_1)(0,0)
		+(v_2h_2-v_1h_1)(0,0)+O(k-B),\quad k\to B.
	%\end{split}
	\end{align}
\end{subequations}
	Since $a_2(B)=0$, we have that $v_1^2(0,0)=v_2^2(0,0)$.
	Therefore, \eqref{a1a2bB} implies the following expansion for $a_1(k)$
	as $k\to B$ (cf.\,\,\cite[Equation (2.45)]{RS21-DE} and 
	\cite[Equation (A.14)]{XF23}):
	\begin{equation}\label{a1B1}
		a_1(k)=\I\frac{A}{2(k-B)}\left(
		b(B)+\I\frac{A}{8}a_2^\prime(B)\right)+O(1),\quad k\to B.
	\end{equation}
	\medskip
	
	\textbf{Step 2.}
	Introduce the following functions (cf.\,\,\eqref{daj-til})
	\begin{equation}\label{ch-a1}
		\check{a}_1(k)=\frac{k^2-B^2}{(k-z_1)(k-z_2)}a_1(k),\quad
		\check{a}_2(k)=\frac{(k-z_1)(k-z_2)}{k^2-B^2}a_2(k),
	\end{equation}
	with, as per \eqref{z1z2},
	\begin{equation}\label{z1z2-t}
		\begin{split}
			&(z_1,z_2)=(\I k_1, \I k_2),\,\,
			\mbox{in Case $\widetilde{\mathrm{I}}$,}\quad
			(z_1,z_2)=(p_1,-\bar{p}_1),\,\,
			\mbox{in Case $\widetilde{\mathrm{II}}$,}\\
			&z_1=z_2=\I\ell_1,\,\,\mbox{in Case $\widetilde{\mathrm{III}}$}.
		\end{split}
	\end{equation}
	Using determinant relation \eqref{detR}, we conclude
	that $\check{a}_1$ and $\check{a}_2$ have the following representations
	(cf.\,\,\eqref{aj-til}):
	\begin{equation}\label{ch-p-a1}
		\check{a}_1(k)=e^{\psi(k)},\quad k\in\C^+,
		\quad\text{and}\quad
		\check{a}_2(k)=e^{-\psi(k)},\quad k\in\C^-,
	\end{equation}
	where $\psi(k)$ is given in \eqref{vpsi}.
	Combining \eqref{a1B1}, \eqref{ch-a1}, and \eqref{ch-p-a1},
	we obtain the following biquadratic equation for the product $(B-z_1)(B-z_2)$ (recall \eqref{E12}):
	\begin{equation}\label{bi-B}
		E_1E_2(B-z_1)^2(B-z_2)^2-\I ABb(B)(B-z_1)(B-z_2)
		+\frac{A^2B^2}{4}E_1E_2^{-1}=0.
	\end{equation}
	Equation \eqref{bi-B} implies that
	\begin{equation}\label{Bz1z2}
		(B-z_1)(B-z_2)=E_+,\quad\mbox{or}\quad
		(B-z_1)(B-z_2)=E_-,
	\end{equation}
	where $E_\pm$ are given in \eqref{E+-}.
	\medskip
	
	\textbf{Proof of items (i) and (ii) in Case $\widetilde{\mathrm{\mathbf{I}}}$.}
	Recalling that $z_j=\I k_j$, $j=1,2$, $0<k_1<k_2$, we obtain from
	\eqref{Bz1z2} the following equations:
	\begin{equation}\label{Bk1k2}
		B^2-k_1k_2=\mathrm{Re}\, E_\pm,\quad
		B(k_1+k_2)=-\mathrm{Im}\,E_\pm.
	\end{equation}
	The second equation in \eqref{Bk1k2} implies that
	$\mathrm{Im}\,E_\pm<0$, see the second condition in \eqref{E+-ca}.
	Moreover, \eqref{Bk1k2} yields the following quadratic equation for $k_1$:
	\begin{equation}\label{quk1}
		k_1^2+\frac{\mathrm{Im}\,E_\pm}{B}k_1+B^2-\mathrm{Re}\,E_\pm=0.
	\end{equation}
	Solving equation \eqref{quk1}, we obtain the first condition in 
	\eqref{E+-ca} and item (ii) in Case $\widetilde{\mathrm{I}}$.
	\medskip
	
	\textbf{Proof of items (i) and (ii) in Case $\widetilde{\mathrm{\mathbf{II}}}$.}
	Using that $z_1=p_1$ and $z_2=-\bar{p}_1$ with $\mathrm{Im}\,p_1>0$ and
	$\mathrm{Re}\,p_1<0$, we conclude from \eqref{Bz1z2} that
	\begin{equation*}
		B^2-|p_1|^2=\mathrm{Re}\,E_\pm,\quad
		\mathrm{Im}\,p_1=-\frac{\mathrm{Im}\,E_\pm}{2B},
	\end{equation*}
	which yield items (i) and (ii) in Case $\widetilde{\mathrm{II}}$.
	\medskip
	
	\textbf{Proof of items (i) and (ii) in Case $\widetilde{\mathrm{\mathbf{III}}}$.}
	Since $z_1=z_2=\I\ell_1$, $\ell_1>0$, we have from \eqref{Bz1z2} that
	$(B-\I\ell_1)^2=E_\pm$.
	This implies the following equations for $\ell_1$:
	\begin{equation*}
		\ell_1^2=B^2-\mathrm{Re}\,E_\pm,\quad
		\ell_1=-\frac{\mathrm{Im}\,E_\pm}{2B},
	\end{equation*}
	and thus we have items (i) and (ii) in Case $\widetilde{\mathrm{III}}$.
	\medskip

	\textbf{Proof of item (iii).}
	Combining \eqref{z1z2}, \eqref{ch-a1}, and \eqref{ch-p-a1}, we obtain 
	item (iii) of the proposition.
\end{proof}

\begin{corollary}[Reflectionless case]
	\label{rlc}
	Consider $b(k)=0$.
	Then \eqref{E+-} and \eqref{E12} yield that 	
	$E_\pm=\pm\I\frac{AB}{2}$.
	Since $\mathrm{Im}\,E_+>0$ and $\mathrm{Im}\,E_-<0$, we conclude from 
	item (i) of Proposition \ref{tza1}
	that only $E_-$ can be an admissible constant,
	and parameters $A$ and $B$ satisfy the following constraints:
	\begin{equation*}
		\mbox{Case $\widetilde{\mathrm{I}}$:}\quad
		0<B<\frac{A}{4},\quad
		\mbox{Case $\widetilde{\mathrm{II}}$:}\quad
		B>\frac{A}{4},\quad
		\mbox{Case $\widetilde{\mathrm{III}}$:}\quad
		B=\frac{A}{4}.
	\end{equation*}
	Moreover, item (ii) of Proposition \ref{tza1} implies that zeros of $a_1(k)$ read as follows:
	\begin{subequations}\label{zrlc}
	\begin{align}
	%\begin{split}
		\label{zrlca}
		&\mbox{Case $\widetilde{\mathrm{I}}$:}\quad
		k_j=\frac{1}{4}\left(A+(-1)^j\sqrt{A^2-16B^2}\right),
		\quad j=1,2,\quad
		0<B<\frac{A}{4},\\
		\label{zrlcb}
		&\mbox{Case $\widetilde{\mathrm{II}}$:}\quad
		p_1=\frac{1}{4}\left(-\sqrt{16B^2-A^2}+\I A\right),
		\quad B>\frac{A}{4},\\
		\label{zrlcc}
		&\mbox{Case $\widetilde{\mathrm{III}}$:}\quad
		\ell_1=\frac{A}{4},\quad B=\frac{A}{4}.
	%\end{split}
	\end{align}
\end{subequations}
	Notice that these zeros of $a_1(k)$ are precisely the same as in the case of the pure oscillating step initial data, see Proposition \ref{a_1s} above.
\end{corollary}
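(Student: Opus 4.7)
The plan is to verify the corollary by direct substitution of $b(k)=0$ into the formulas of Proposition \ref{tza1}, making the key observation that the reflectionless assumption collapses all the Cauchy integrals to zero.

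First I would compute $E_1$ and $E_2$ using \eqref{E12}. With $b(k)\equiv 0$, we have $\log(1-b^{2}(\zeta))=\log 1=0$, so the principal-value integral in the definition of $E_{1}$ vanishes and likewise $\log(1-b^{2}(B))=0$, giving $E_{1}=E_{2}=1$. Substituting into \eqref{E+-} yields
\begin{equation*}
E_{\pm}=\I\frac{AB}{2}\bigl(0\pm\sqrt{1+0}\bigr)=\pm\I\frac{AB}{2},
\end{equation*}
as claimed. Since $A,B>0$, we have $\mathrm{Im}\,E_{+}=AB/2>0$ and $\mathrm{Im}\,E_{-}=-AB/2<0$; the sign condition $\mathrm{Im}\,E_{\pm}<0$ appearing in every case of \eqref{E+-c} therefore rules out $E_{+}$ in all three settings, and $E_{-}$ becomes the only admissible constant.

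Next I would translate the remaining inequalities in \eqref{E+-c} into constraints on $A$ and $B$. With $\mathrm{Re}\,E_{-}=0$ and $\mathrm{Im}^{2}\,E_{-}/(4B^{2})=A^{2}/16$, the condition $0<B^{2}-\mathrm{Re}\,E_{-}<\mathrm{Im}^{2}\,E_{-}/(4B^{2})$ of Case $\widetilde{\mathrm{I}}$ reduces to $0<B<A/4$; the strict reverse inequality in Case $\widetilde{\mathrm{II}}$ gives $B>A/4$; and the equality in Case $\widetilde{\mathrm{III}}$ gives $B=A/4$.

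Finally, I would plug $E_{-}=-\I AB/2$ into the expressions for the zeros provided in item (ii) of Proposition \ref{tza1}. In Case $\widetilde{\mathrm{I}}$,
\begin{equation*}
k_{j}=-\frac{\mathrm{Im}\,E_{-}}{2B}+(-1)^{j}\sqrt{\frac{\mathrm{Im}^{2}\,E_{-}}{4B^{2}}-B^{2}}=\frac{A}{4}+(-1)^{j}\frac{\sqrt{A^{2}-16B^{2}}}{4},
\end{equation*}
which rearranges into \eqref{zrlca}; in Case $\widetilde{\mathrm{II}}$ the analogous substitution produces $p_{1}=-\sqrt{B^{2}-A^{2}/16}+\I A/4$, agreeing with \eqref{zrlcb}; and in Case $\widetilde{\mathrm{III}}$ we immediately read off $\ell_{1}=-\mathrm{Im}\,E_{-}/(2B)=A/4$, matching \eqref{zrlcc}. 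The concluding remark that these coincide with the zeros from Proposition \ref{a_1s} is then a direct comparison.

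Since this corollary is essentially a verification of the preceding general trace formulae in a special case, there is no genuine obstacle: the only subtle point is being careful that the condition $b(B)\neq\pm 1$ (required throughout Cases $\widetilde{\mathrm{I}}$--$\widetilde{\mathrm{III}}$) is automatically satisfied here because $b\equiv 0$, so Proposition \ref{tza1} applies without caveat.
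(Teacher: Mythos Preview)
Your proposal is correct and follows essentially the same approach as the paper: the corollary is stated with its reasoning embedded (computing $E_\pm$ from \eqref{E+-}--\eqref{E12} with $b\equiv0$, discarding $E_+$ via the sign condition in item (i) of Proposition~\ref{tza1}, and reading off the constraints and zeros from items (i)--(ii)), and your write-up simply spells out those substitutions in detail. There is no separate proof in the paper beyond this, so nothing differs.
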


\subsection{Basic Riemann-Hilbert problem}\label{BRHp}

Now we are at the position to formulate the basic Riemann-Hilbert problem associated to the Cauchy problem \eqref{nmkdvs}--\eqref{bcs}.
Consider the following sectionally meromorphic $2\times2$ matrix-valued function:
\begin{equation}
	\label{DM}
	M(x,t,k)=
	\left\{
	\begin{array}{lcl}
		\left(\frac{\Psi_1^{(1)}(x,t,k)}{a_{1}(k)},
		\Psi_2^{(2)}(x,t,k)\right),\quad k\in\C^+,\\
		\left(\Psi_2^{(1)}(x,t,k),\frac{\Psi_1^{(2)}(x,t,k)}
		{a_{2}(k)}\right),\quad k\in\C^-.\\
	\end{array}
	\right.
\end{equation}
Using the scattering relation \eqref{S} (recall also \eqref{Phi}), we conclude that 
$M(x,t,k)$ satisfies the following jump condition on
$\R\setminus\{-B,B\}$
(the real line is oriented from $-\infty$ to $\infty$):
\begin{equation}
	\label{Jj}
	M_+(x,t,k)=M_-(x,t,k)J(x,t,k),\quad k\in\R\setminus\{-B,B\},
\end{equation}
with
\begin{equation}\label{jump}
	J(x,t,k)=
	\begin{pmatrix}
		1+r_{1}(k)r_{2}(k)& r_{2}(k)e^{-2\I kx-8\I k^3t}\\
		r_1(k)e^{2\I kx+8\I k^3t}& 1
	\end{pmatrix},\quad
	k\in\R\setminus\{-B,B\}.
\end{equation}
Here the functions $r_j(k)$, $j=1,2$, referred as the reflection coefficients, are defined as follows:
\begin{equation}\label{r12}
	r_1(k)=\frac{b(k)}{a_1(k)},
	\quad k\in\R,\quad
	r_2(k)=\frac{b(k)}{a_2(k)},
	\quad k\in\R\setminus\{-B,B\}.
\end{equation}
Combining item (v) of Proposition \ref{aj-pr} and \eqref{r12},
we conclude that $r_j(k)$, $j=1,2$, have the following behavior as $k\to\pm B$:
\begin{equation*}
	%\label{zrj}
	\begin{split}
		&r_1(k)=-\I\frac{4}{A}(k\mp B)+O\left((k\mp B)^2\right),
		\quad k\to\pm B,\\
		&r_2(k)=-\I\frac{A}{4(k\mp B)}+O(1),\quad k\to\pm B,
	\end{split}
\end{equation*}
in every Case I, II, and III, as well as $\widetilde{\mathrm{I}}$, $\widetilde{\mathrm{II}}$, and $\widetilde{\mathrm{III}}$.
Then, item (iii) of Proposition \ref{aj-pr} implies the following symmetries of $r_j(k)$, $j=1,2$:
\begin{equation}
	\label{rjs}
	r_1(k)=\bar{r}_1(-k),\quad k\in\R,\quad
	r_2(k)=\bar{r}_2(-k),\quad k\in\R\setminus\{-B,B\},
\end{equation}
while the determinant relation given in \eqref{detR}
yields that
\begin{equation}
	\label{r-ja-j}
	1+r_1(k)r_2(k)=\frac{1}{a_1(k)a_2(k)},
	\quad k\in\R.
\end{equation}

Combining \eqref{DM} and Proposition \ref{pPsi}, items (ii)--(iii), we obtain the following  normalization condition at infinity of $M$:
\begin{equation}\label{Norm}
	M(x,t,k)\to I,\quad k\to\infty.
\end{equation}

Observe that $M(x,t,k)$ is not bounded at $k=\pm B$.
In the next proposition, we establish the behavior of $M(x,t,k)$
at these points.
\begin{proposition}[Singularities at $k=\pm B$]\label{Bcon}
	$M(x,t,k)$ satisfies the following conditions 
	as $k$ converges to $\pm B$ from the upper and lower complex half-planes:
	\begin{description}[font=\normalfont, itemindent=-\parindent]
		\item[\textit{Cases} I--III]
		\begin{equation}\label{pmB}
			\lim\limits_{k\to\pm B+\I0}
			M(x,t,k)\begin{pmatrix}
				\frac{1}{k\mp B}&0\\
				0& k\mp B
			\end{pmatrix}=
			M(x,t,\pm B-\I0)\begin{pmatrix}
				0&-\I\frac{A}{4}e^{\mp2\I Bx\mp8\I B^3t}\\
				-\I\frac{4}{A}e^{\pm2\I Bx\pm8\I B^3t}&0
			\end{pmatrix};
		\end{equation}
		
		\item[\textit{Cases} $\widetilde{\mathrm{I}}$--$\widetilde{\mathrm{III}}$]
		\begin{equation}\label{tpmB}
			\begin{split}
				&\lim\limits_{k\to\pm B+\I0}
				M(x,t,k)\begin{pmatrix}
					1&0\\
					0& k\mp B
				\end{pmatrix}=
				\left(\lim\limits_{k\to\pm B-\I0}
				M(x,t,k)\begin{pmatrix}
					1&0\\
					0& k\mp B
				\end{pmatrix}\right)
				Q_\pm(x,t),\\
				&\det\left(\lim\limits_{k\to\pm B+\I0}
				M(x,t,k)\begin{pmatrix}
					1&0\\
					0& k\mp B
				\end{pmatrix}\right)=0,
			\end{split}
		\end{equation}
		where the $2\times 2$ matrix $Q_\pm$ has the following form:
		\begin{equation*}
			Q_\pm(x,t)=-\I\frac{A}{4}\begin{pmatrix}
				0&e^{\mp2\I Bx\mp8\I B^3t}\\
				\frac{\left(a_2^\prime(\pm B)\right)^2}
				{1-b^2(\pm B)}e^{\pm2\I Bx\pm8\I B^3t}&0
			\end{pmatrix}.
		\end{equation*}
	\end{description}
\end{proposition}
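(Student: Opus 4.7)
The approach is a direct computation: substitute the near-$\pm B$ expansions of the columns of $\Psi_j$ from item (vi) of Proposition \ref{pPsi} together with the near-$\pm B$ behavior of $a_1(k), a_2(k)$ from item (v) of Proposition \ref{aj-pr} (and from \eqref{b(B)} in the tilde cases) into the definition \eqref{DM} of $M(x,t,k)$, and then match coefficients. I will carry this out for $k\to B$ only; the case $k\to-B$ follows identically after replacing the expansions of Proposition \ref{pPsi}(vi) at $B$ by those at $-B$, with the obvious substitutions $(v_1,v_2)\mapsto(-\bar v_1,-\bar v_2)$ and the sign change of the exponentials.

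For Cases I--III, $a_2(\pm B)\neq 0$ and $a_1(k)\sim \frac{A^2a_2(B)}{16(k-B)^2}$ as $k\to B$. Using Proposition \ref{pPsi}(vi), the first column $\Psi_1^{(1)}/a_1$ from $\C^{+}$ has a simple zero at $k=B$, while $\Psi_2^{(2)}$ has a simple pole. Multiplying $M(x,t,k)$ by $\mathrm{diag}(1/(k-B),\,k-B)$ removes both, and the limit from $\C^+$ equals
\begin{equation*}
\begin{pmatrix} \frac{16}{A^2a_2(B)}v_1(x,t) & v_2(-x,-t) \\[.3ex] \frac{16}{A^2a_2(B)}v_2(x,t) & v_1(-x,-t)\end{pmatrix}.
\end{equation*}
On the other hand, $M(x,t,B-\I0)$ is obtained by evaluating $\Psi_2^{(1)}$ (regular) and $\Psi_1^{(2)}/a_2$ (regular since $a_2(B)\neq 0$) at $k=B$ via Proposition \ref{pPsi}(vi). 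A direct $2\times 2$ multiplication by the off-diagonal matrix on the right-hand side of \eqref{pmB} then reproduces precisely the same expression, using that the exponentials $e^{\pm(2\I Bx+8\I B^3t)}$ cancel and the prefactors $\frac{4\I}{A}$ combine to give $\frac{16}{A^2a_2(B)}$ or $1$ as appropriate. This verifies \eqref{pmB}.

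For Cases $\widetilde{\mathrm I}$--$\widetilde{\mathrm{III}}$, we have $a_2(\pm B)=0$ and, by \eqref{b(B)}, $a_1(k)=\frac{1-b^2(B)}{a_2'(B)(k-B)}+O(1)$, so $a_1$ and $\Psi_1^{(1)}$ both have simple poles and their ratio is regular at $k=B$. Consequently, multiplying $M$ by $\mathrm{diag}(1,\,k-B)$ produces finite limits from both sides. A computation analogous to Step 1 yields, from $\C^+$,
\begin{equation*}
L^{+}:=\begin{pmatrix}\frac{a_2'(B)}{1-b^2(B)}v_1(x,t) & v_2(-x,-t)\\[.3ex] \frac{a_2'(B)}{1-b^2(B)}v_2(x,t) & v_1(-x,-t)\end{pmatrix},
\end{equation*}
whose determinant equals $\frac{a_2'(B)}{1-b^2(B)}\bigl(v_1(x,t)v_1(-x,-t)-v_2(x,t)v_2(-x,-t)\bigr)$; by the conservation law \eqref{a2cl} this is $\frac{a_2'(B)}{1-b^2(B)}\,a_2(B)=0$, which gives the second identity in \eqref{tpmB}. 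From $\C^-$, the corresponding limit $L^{-}$ has columns $\frac{4\I}{A}e^{2\I Bx+8\I B^3t}(v_2(-x,-t),v_1(-x,-t))^T$ and $\frac{4\I}{Aa_2'(B)}e^{-2\I Bx-8\I B^3t}(v_1(x,t),v_2(x,t))^T$. Multiplying $L^{-}$ on the right by the prescribed matrix $Q_+(x,t)$ swaps these columns and rescales them by $-\I A/4\cdot(a_2'(B))^2/(1-b^2(B))\cdot e^{2\I Bx+8\I B^3t}$ and $-\I A/4\cdot e^{-2\I Bx-8\I B^3t}$ respectively, and the exponentials and prefactors combine to reproduce exactly $L^{+}$.

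The only nontrivial points are bookkeeping issues: matching the prefactors $\frac{4\I}{A}$ and the rational factors from $a_1$, $a_2$, $a_2'$ so that the exponentials $e^{\pm(2\I Bx+8\I B^3t)}$ cancel correctly, and invoking the conservation law \eqref{a2cl} at the right moment to justify the vanishing determinant in the tilde case. The case $k\to-B$ is identical once one substitutes the corresponding expansions from Proposition \ref{pPsi}(vi); the complex conjugation there automatically yields the sign changes that appear in \eqref{pmB}--\eqref{tpmB}.
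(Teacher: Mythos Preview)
Your proposal is correct and follows essentially the same approach as the paper: both proofs substitute the expansions of $\Psi_j^{(i)}$ near $k=\pm B$ from Proposition~\ref{pPsi}(vi) together with the behavior of $a_1,a_2$ from Proposition~\ref{aj-pr}(v) (and \eqref{b(B)} in the tilde cases) into \eqref{DM}, compute the explicit limiting matrices from $\C^+$ and $\C^-$, and match them; the vanishing determinant in Cases $\widetilde{\mathrm I}$--$\widetilde{\mathrm{III}}$ is obtained via the conservation law \eqref{a2cl} in both arguments. The paper displays the resulting matrices explicitly (its equations for the upper and lower limits) but the content is the same bookkeeping you outline.
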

\begin{proof}
Proposition \ref{pPsi}, item (vi), and \eqref{a_1+-B} imply that $M(x,t,k)$
admits the following expansion in Cases I--III
(cf.\,\,\cite[Equation (2.48)]{RS21-DE}, \cite[Proposition 4]{XF23}, and \cite[Section 2.4]{RST25}):
\begin{subequations}
\label{zprB}
\begin{equation}
\begin{split}
	&M(x,t,k)
	\begin{pmatrix}
		\frac{1}{k- B}& 0\\
		0& k- B
	\end{pmatrix}=
	\begin{pmatrix}
		\frac{16v_1(x,t)}{A^2a_2(B)}& v_2(-x,-t)\\
		\frac{16v_2(x,t)}{A^2a_2(B)}& v_1(-x,-t)
	\end{pmatrix}+O(k-B),\quad k\to B,\quad k\in\C^+,\\
	&M(x,t,k)=\frac{4\I}{A}
	\begin{pmatrix}
		e^{2\I Bx+8\I B^3t}v_2(-x,-t)&
		e^{-2\I Bx-8\I B^3t}\frac{v_1(x,t)}{a_2(B)}\\
		e^{2\I Bx+8\I B^3t}v_1(-x,-t)&
		e^{-2\I Bx-8\I B^3t}\frac{v_2(x,t)}{a_2(B)}
	\end{pmatrix}\\
	&\qquad\qquad\quad\,\,+O(k-B),\quad k\to B,\quad k\in\C^-,
\end{split}
\end{equation}
and
\begin{equation}
	%\label{zpr-B}
	\begin{split}
		&M(x,t,k)
		\begin{pmatrix}
			\frac{1}{k+ B}& 0\\
			0& k+ B
		\end{pmatrix}=
		\begin{pmatrix}
			-\frac{16\bar{v}_1(x,t)}{A^2a_2(-B)}& -\bar{v}_2(-x,-t)\\
			-\frac{16\bar{v}_2(x,t)}{A^2a_2(-B)}& -\bar{v}_1(-x,-t)
		\end{pmatrix}+O(k+B),\quad k\to -B,\quad k\in\C^+,\\
		&M(x,t,k)=-\frac{4\I}{A}
		\begin{pmatrix}
			e^{-2\I Bx-8\I B^3t}\bar{v}_2(-x,-t)&
			e^{2\I Bx+8\I B^3t}\frac{\bar{v}_1(x,t)}{a_2(-B)}\\
			e^{-2\I Bx-8\I B^3t}\bar{v}_1(-x,-t)&
			e^{2\I Bx+8\I B^3t}\frac{\bar{v}_2(x,t)}{a_2(-B)}
		\end{pmatrix}\\
		&\qquad\qquad\quad\,\,+O(k+B),\quad k\to -B,\quad k\in\C^-.
	\end{split}
\end{equation}
\end{subequations}
Applying again item (vi) of Proposition \ref{pPsi} and \eqref{b(B)}, we obtain the following behavior of $M$ as $k\to\pm B$ in 
	Cases $\widetilde{\mathrm{I}}$--$\widetilde{\mathrm{III}}$:
	\begin{subequations}
		\label{IzprB}
	\begin{equation}
		\begin{split}
			&M(x,t,k)
			\begin{pmatrix}
				1& 0\\
				0& k- B
			\end{pmatrix}=
			\begin{pmatrix}
				\frac{a_2^\prime(B)v_1(x,t)}{1-b^2(B)}& v_2(-x,-t)\\
				\frac{a_2^\prime(B)v_2(x,t)}{1-b^2(B)}& v_1(-x,-t)
			\end{pmatrix}+O(k-B),\quad k\to B,\quad k\in\C^+,\\
			&M(x,t,k)\begin{pmatrix}
				1& 0\\
				0& k- B
			\end{pmatrix}
			=\frac{4\I}{A}
			\begin{pmatrix}
				e^{2\I Bx+8\I B^3t}v_2(-x,-t)&
				e^{-2\I Bx-8\I B^3t}\frac{v_1(x,t)}{a_2^{\prime}(B)}\\
				e^{2\I Bx+8\I B^3t}v_1(-x,-t)&
				e^{-2\I Bx-8\I B^3t}\frac{v_2(x,t)}{a_2^{\prime}(B)}
			\end{pmatrix}\\
			&\qquad\qquad\qquad\qquad\qquad\quad\,
			+O(k-B),\quad k\to B,\quad k\in\C^-,
		\end{split}
	\end{equation}
	and
	\begin{equation}
		%\label{Izpr-B}
		\begin{split}
			&M(x,t,k)
			\begin{pmatrix}
				1& 0\\
				0& k+B
			\end{pmatrix}=
			-\begin{pmatrix}
				\frac{a_2^\prime(-B)\bar{v}_1(x,t)}{1-b^2(-B)}& 
				\bar{v}_2(-x,-t)\\
				\frac{a_2^\prime(-B)\bar{v}_2(x,t)}{1-b^2(-B)}&
				\bar{v}_1(-x,-t)
			\end{pmatrix}+O(k+B),\quad k\to -B,\quad k\in\C^+,\\
			&M(x,t,k)\begin{pmatrix}
				1& 0\\
				0& k+B
			\end{pmatrix}
			=-\frac{4\I}{A}
			\begin{pmatrix}
				e^{-2\I Bx-8\I B^3t}\bar{v}_2(-x,-t)&
				e^{2\I Bx+8\I B^3t}\frac{\bar{v}_1(x,t)}{a_2^{\prime}(-B)}\\
				e^{-2\I Bx-8\I B^3t}\bar{v}_1(-x,-t)&
				e^{2\I Bx+8\I B^3t}\frac{\bar{v}_2(x,t)}{a_2^{\prime}(-B)}
			\end{pmatrix}\\
			&\qquad\qquad\qquad\qquad\qquad\quad\,
			+O(k+B),\quad k\to -B,\quad k\in\C^-,
		\end{split}
	\end{equation}
	\end{subequations}
	where (recall \eqref{a2cl})
	\begin{equation}\label{dettB}
		v_1(x,t)v_1(-x,-t)-v_2(x,t)v_2(-x,-t)=0,\quad\mbox{for all }\, x,t\in\R.
	\end{equation}

Then \eqref{pmB} and \eqref{tpmB} follow from \eqref{zprB} and \eqref{IzprB}--\eqref{dettB}, respectively.
\end{proof}

Recalling assumptions on zeros of $a_1(k)$ in Cases I--III and 
$\widetilde{\mathrm{I}}$--$\widetilde{\mathrm{III}}$, as well as the analytical properties of the columns $\Psi_i^{(j)}$, $i,j=1,2$, described in items (ii)--(iii) of Proposition \ref{pPsi},
we conclude from \eqref{DM} that the first column of
$M(x,t,k)$ has poles at zeros of $a_1(k)$ for $k\in\C^+$.
The behavior of $M$ near these poles is described in the next proposition.
%In the next proposition, we describe these poles in terms
%of the second column of $M(x,t,k)$ evaluated at these .

\begin{proposition}[Residue conditions]
\label{Rcon}
Define the norming constants $\gamma_j$, $j=1,2$, $\eta_1$, and $\nu_1$
in terms of the initial data $u_0(x)$ as follows
(recall \eqref{Psi1}, \eqref{Psi2}, and \eqref{sda}):
\begin{subequations}
	\label{d-nct}
\begin{align}
	&\mbox{Cases $\mathrm{I}$, $\widetilde{\mathrm{I}}$:}\quad
	\Psi_1^{(1)}(0,0,\I k_j)=\gamma_j\Psi_2^{(2)}(0,0,\I k_j),
	\quad j=1,2,\\
	&\mbox{Cases $\mathrm{II}$, $\widetilde{\mathrm{II}}$:}\quad	\Psi_1^{(1)}(0,0,p_1)=\eta_1\Psi_2^{(2)}(0,0,p_1),\\
	\label{d-nctc}
	&\mbox{Cases $\mathrm{III}$, $\widetilde{\mathrm{III}}$:}\quad	\Psi_1^{(1)}(0,0,\I\ell_1)=\nu_1\Psi_2^{(2)}(0,0,\I\ell_1).
\end{align}
\end{subequations}
Then we have that
\begin{equation}\label{nct}
	\gamma_j=\pm1,\,\,j=1,2,\quad\eta_1=\pm1,\quad
	\nu_1=\pm1,
\end{equation}
and the first column of the $2\times2$ matrix $M(x,t,k)$, given in \eqref{DM}, has the following poles in $\C^+$:
\begin{description}[font=\normalfont, itemindent=-\parindent]
	\item[\textit{Cases} I, $\widetilde{\mathrm{I}}$]
	$M^{(1)}$ has simple poles at $k=\I k_j$, $j=1,2$, which satisfy the following residue conditions:
	\begin{equation}\label{rskj}
		\underset{k=\I k_j}{\operatorname{Res}} M^{(1)}(x,t,k)=
		\frac{\gamma_j}{a_1^\prime(\I k_j)}
		e^{-2k_jx+8k_j^3t}M^{(2)}(x,t,\I k_j),
		\quad j=1,2;
	\end{equation}
	
	\item[\textit{Cases} II, $\widetilde{\mathrm{II}}$]
	$M^{(1)}$ has simple poles at $k=p_1$ and $k=-\bar{p}_1$, which satisfy the following residue conditions:
	\begin{equation}\label{rsp1}
	\begin{split}
		&\underset{k=p_1}{\operatorname{Res}} M^{(1)}(x,t,k)=
		\frac{\eta_1}{a_1^\prime(p_1)}
		e^{2\I p_1x+8\I p_1^3t}M^{(2)}(x,t,p_1),\\
		&\underset{k=-\bar{p}_1}{\operatorname{Res}} M^{(1)}(x,t,k)=
		\frac{\eta_1}{a_1^\prime(-\bar{p}_1)}
		e^{-2\I\bar{p}_1x-8\I\bar{p}_1^3t}
		M^{(2)}(x,t,-\bar{p}_1);
	\end{split}
	\end{equation}
	
	\item[\textit{Cases} III, $\widetilde{\mathrm{III}}$]
	$M^{(1)}$ has a double pole at $k=\I\ell_1$, which satisfies the following conditions:
	\begin{subequations}\label{rsl1}
	\begin{align}
		\label{rsl1-a}
		&\underset{k=\I\ell_1}{\operatorname{Res}}(k-\I\ell_1)M^{(1)}(x,t,k)=
		\frac{2\nu_1}{a_1^{\prime\prime}(\I\ell_1)}
		e^{-2\ell_1x+8\ell_1^3t}M^{(2)}(x,t,\I\ell_1),\\
		\nonumber
		&\underset{k=\I\ell_1}{\operatorname{Res}} M^{(1)}(x,t,k)=
		\frac{2\nu_1}{a_1^{\prime\prime}(\I\ell_1)}
		e^{-2\ell_1x+8\ell_1^3t}\left(
		\partial_kM^{(2)}(x,t,\I\ell_1)\right.\\
		\label{rsl1-b}
		&\left.\qquad\qquad\qquad\qquad\qquad\qquad
		\qquad\qquad\quad
		+\left(2\I(x-12\ell_1^2t)
		-\frac{a_1^{\prime\prime\prime}(\I\ell_1)}
		{3a_1^{\prime\prime}(\I\ell_1)}\right)
		M^{(2)}(x,t,\I\ell_1)
		\right).
	\end{align}
	\end{subequations}
\end{description}
\end{proposition}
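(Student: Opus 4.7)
My plan is to prove the proposition in three stages: (i) identifying the norming constants as $\pm1$, (ii) extending the column proportionality from the reference point $(0,0)$ to arbitrary $(x,t)$ and computing the residues in the simple-pole cases, and (iii) handling the double-pole case by a Taylor expansion.

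For (i), the first identity in \eqref{Psi-sym} at $(x,t)=(0,0)$ gives $\sigma_1\Psi_1^{(1)}(0,0,k)=\Psi_2^{(2)}(0,0,k)$ for every $k$. At any zero $k_0$ of $a_1(k)$, the vectors $\Psi_1^{(1)}(0,0,k_0)$ and $\Psi_2^{(2)}(0,0,k_0)$ are collinear (since $a_1(k_0)$ is the determinant of those two columns), which justifies the norming definition \eqref{d-nct}. Substituting that definition into the symmetry relation yields $\sigma_1\Psi_2^{(2)}(0,0,k_0)=c\,\Psi_2^{(2)}(0,0,k_0)$ with $c\in\{\gamma_j,\eta_1,\nu_1\}$; since $\sigma_1$ has eigenvalues $\pm1$, this proves \eqref{nct}.

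For (ii), I observe that both $\Phi_1^{(1)}(\cdot,\cdot,k_0)$ and $\Phi_2^{(2)}(\cdot,\cdot,k_0)$ are vector solutions of the linear Lax system \eqref{LPnmkdv}, so their collinearity at $(0,0)$ established in stage (i) propagates to every $(x,t)\in\R^2$ by uniqueness of the associated Cauchy problem. Restoring the exponentials via \eqref{Phi} converts this into
$\Psi_1^{(1)}(x,t,k_0)=c\,e^{2\I k_0 x+8\I k_0^3 t}\Psi_2^{(2)}(x,t,k_0)$; dividing by $a_1(k)$ and using $\operatorname{Res}_{k=k_0}(1/a_1(k))=1/a_1'(k_0)$ yields \eqref{rskj} and the $p_1$-part of \eqref{rsp1}. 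The $-\bar p_1$-part additionally requires the proportionality constant at $-\bar p_1$ to equal the same $\eta_1$; I plan to obtain this by combining the two identities in \eqref{Psi-sym} at $(x,t)=(0,0)$ to deduce $\Psi_1^{(1)}(0,0,-\bar p_1)=\eta_1\Psi_2^{(2)}(0,0,-\bar p_1)$ from \eqref{d-nct}.

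For (iii), since $a_1(\I\ell_1)=a_1'(\I\ell_1)=0$, I would Taylor-expand $a_1(k)=\frac{1}{2}a_1''(\I\ell_1)(k-\I\ell_1)^2+\frac{1}{6}a_1'''(\I\ell_1)(k-\I\ell_1)^3+O((k-\I\ell_1)^4)$ together with $\Psi_1^{(1)}(x,t,k)=\Psi_1^{(1)}(x,t,\I\ell_1)+(k-\I\ell_1)\partial_k\Psi_1^{(1)}(x,t,\I\ell_1)+O((k-\I\ell_1)^2)$, and read off the Laurent coefficients of $\Psi_1^{(1)}(x,t,k)/a_1(k)$ at $k=\I\ell_1$. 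The $(k-\I\ell_1)^{-2}$ coefficient yields \eqref{rsl1-a} directly from the stage-(ii) identity $\Psi_1^{(1)}(x,t,\I\ell_1)=\nu_1 e^{-2\ell_1 x+8\ell_1^3 t}\Psi_2^{(2)}(x,t,\I\ell_1)$. To extract the residue \eqref{rsl1-b}, I need to control $\partial_k\Psi_1^{(1)}(x,t,\I\ell_1)$: the vanishing of $a_1'(\I\ell_1)$ supplies a scalar $\tilde\nu_1$ with $\partial_k\Psi_1^{(1)}(0,0,\I\ell_1)=\nu_1\partial_k\Psi_2^{(2)}(0,0,\I\ell_1)+\tilde\nu_1\Psi_2^{(2)}(0,0,\I\ell_1)$, and differentiating the Lax system \eqref{LPnmkdv} in $k$ and propagating this identity to all $(x,t)$ generates, after restoring the exponentials, the $\partial_kM^{(2)}(x,t,\I\ell_1)$ and the $2\I(x-12\ell_1^2 t)M^{(2)}(x,t,\I\ell_1)$ contributions in \eqref{rsl1-b}. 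The \emph{main obstacle} is to show that $\tilde\nu_1=0$, since no such term appears in \eqref{rsl1-b}: I would extract this by differentiating $\sigma_1\Psi_1^{(1)}(0,0,k)=\Psi_2^{(2)}(0,0,k)$ in $k$ at $k=\I\ell_1$ and combining the result with the eigenvalue relation $\sigma_1\Psi_2^{(2)}(0,0,\I\ell_1)=\nu_1\Psi_2^{(2)}(0,0,\I\ell_1)$ from stage (i); the $\sigma_1$-eigenspace decomposition then forces $\tilde\nu_1\Psi_2^{(2)}(0,0,\I\ell_1)$ to lie in the opposite eigenspace, hence to vanish, completing the proof.
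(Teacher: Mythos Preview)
Your proposal is correct and follows essentially the same route as the paper. The only difference is cosmetic: where the paper writes out the componentwise identities $(\Psi_1)_{11}(0,0,k)=(\Psi_2)_{22}(0,0,k)$, $(\Psi_1)_{21}(0,0,k)=(\Psi_2)_{12}(0,0,k)$ and manipulates them by hand (both for \eqref{nct} and for the vanishing of the extra constant in Step~3), you phrase the same computation as an eigenvector argument for $\sigma_1$; the paper's constant $\rho_1$ is exactly your $\tilde\nu_1$, and the paper's derivation of $\rho_1=0$ from the system \eqref{spk} is the componentwise version of your eigenspace decomposition.
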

\begin{proof}
	\textbf{Step 1.}
	Using the first symmetry relation in \eqref{Psi-sym}, we obtain 
	that
	\begin{equation}\label{PsC+}
		\left(\Psi_1\right)_{11}(0,0,k)=\left(\Psi_2\right)_{22}(0,0,k),
		\quad\mbox{and}\quad
		\left(\Psi_1\right)_{21}(0,0,k)=\left(\Psi_2\right)_{12}(0,0,k),
		\quad k\in\C^+,
		%\left(\left(\Psi_1\right)_{11}(0,0,k),
		%\left(\Psi_1\right)_{21}(0,0,k)\right)
		%=\left(\left(\Psi_2\right)_{22}(0,0,k),
		%\left(\Psi_2\right)_{12}(0,0,k)\right),\quad k\in\C^+
	\end{equation}
	which, together with \eqref{d-nct}, imply
	\eqref{nct}.
	Also, notice that in Cases II and $\widetilde{\mathrm{II}}$ 
	there exists a constant $\hat{\eta}_1$ such that
	\begin{equation}\label{Psi-et}
		\Psi_1^{(1)}(0,0,-\bar{p}_1)=
		\hat{\eta}_1\Psi_2^{(2)}(0,0,-\bar{p}_1).
	\end{equation}
	Then the second
	symmetry relation in \eqref{Psi-sym} implies that
	$\hat{\eta}_1=\bar{\eta}_1^{-1}$ and therefore (recall that $\eta_1=\pm1$)
	\begin{equation}\label{et-h}
		\hat{\eta}_1=\eta_1.
	\end{equation}

	\textbf{Step 2.}
	Using that the columns $\Phi_i^{(j)}(x,t,k)$, $i,j=1,2$,
	solve the Lax pair \eqref{LPnmkdv}, and that 
	$\Phi_i^{(j)}(0,0,k)=\Psi_i^{(j)}(0,0,k)$, $i,j=1,2$, recall \eqref{Phi},
	we arrive at the following relations 
	(see \eqref{d-nct}, \eqref{Psi-et}, and \eqref{et-h}):
	\begin{subequations}
	\label{P-nct}
	\begin{align}
		\label{P-ncta}
		&\mbox{Cases $\mathrm{I}$, $\widetilde{\mathrm{I}}$:}\quad
		\Phi_1^{(1)}(x,t,\I k_j)=\gamma_j\Phi_2^{(2)}(x,t,\I k_j),
		\quad j=1,2,\\
		\label{P-nctb}
		&\mbox{Cases $\mathrm{II}$, $\widetilde{\mathrm{II}}$:}\quad
		\Phi_1^{(1)}(x,t,p_1)=\eta_1\Phi_2^{(2)}(x,t,p_1),
		\quad \Phi_1^{(1)}(x,t,-\bar{p}_1)=\eta_1\Phi_2^{(2)}(x,t,-\bar{p}_1),\\
		\label{P-nctc}
		&\mbox{Cases $\mathrm{III}$, $\widetilde{\mathrm{III}}$:}\quad
		\Phi_1^{(1)}(x,t,\I\ell_1)=\nu_1\Phi_2^{(2)}(x,t,\I\ell_1),
	\end{align}
	\end{subequations}
	for all $x,t\in\R$.
	Relations \eqref{P-ncta}, \eqref{P-nctb}, and \eqref{P-nctc},
	together with \eqref{DM}, imply \eqref{rskj}, \eqref{rsp1}, and \eqref{rsl1-a}, respectively.
	\medskip
	
	\textbf{Step 3.}
	It remains to establish \eqref{rsl1-b} for Cases $\mathrm{III}$ and $\widetilde{\mathrm{III}}$.
	Recalling that $a_1(k)$ has a double zero at $k=\I\ell_1$,
	we have the following Laurent expansion of $M^{(1)}(x,t,k)$ (recall \eqref{DM}):
	\begin{equation}\label{M1L}
	\begin{split}
		M^{(1)}(x,t,k)=\,&2\frac{\Psi^{(1)}_1(x,t,\I\ell_1)}
		{a_1^{\prime\prime}(\I\ell_1)}
		(k-\I\ell_1)^{-2}\\
		&+2\left(\frac{\partial_k\Psi_1^{(1)}(x,t,\I\ell_1)}
		{a_1^{\prime\prime}(\I\ell_1)}
		-\frac{\Psi_1^{(1)}(x,t,\I\ell_1)a_1^{\prime\prime\prime}(\I\ell_1)}
		{3\left(a_1^{\prime\prime}\right)^2(\I\ell_1)}
		\right)(k-\I\ell_1)^{-1}+O(1),\quad k\to\I\ell_1.
	\end{split}
	\end{equation}
	
	Let us prove that (cf.\,\,\eqref{P-nctc})
	\begin{equation}\label{pkPhil1}
		\partial_k\Phi_1^{(1)}(x,t,\I\ell_1)
		=\nu_1\partial_k\Phi_2^{(2)}(x,t,\I\ell_1).
	\end{equation}
	Observe that 
	\eqref{S}, together with the Cramer's rule, yields the following formula for $a_1(k)$ (cf.\,\,\eqref{sda}):
	\begin{equation}\label{a1de}
		a_1(k)=\det\left(
		\Phi_1^{(1)}(x,t,k),\Phi_2^{(2)}(x,t,k)
		\right),
		\quad x,t\in\R,
		\quad k\in\overline{\C^+}
		\setminus\{-B,B\}.
	\end{equation}
	Using \eqref{P-nctc} and that $a_1^\prime(\I\ell_1)=0$,
	we obtain from \eqref{a1de} the following equation:
	\begin{equation}\label{de0}
		\det\left(
		\partial_k\left(\Phi_1^{(1)}
		-\nu_1\Phi_2^{(2)}\right)(x,t,\I\ell_1),
		\Phi_2^{(2)}(x,t,\I\ell_1)
		\right)=0,
		\quad x,t\in\R.
	\end{equation}
	Since $\left(\Phi_1^{(1)}-\nu_1\Phi_2^{(2)}\right)(x,t,\I\ell_1)=0$,
	we have that both 
	$\partial_k\left(\Phi_1^{(1)}-\nu_1\Phi_2^{(2)}\right)(x,t,\I\ell_1)$
	and $\Phi_2^{(2)}(x,t,\I\ell_1)$ satisfy \eqref{LPnmkdv}.
	Therefore, \eqref{de0} implies that there exists a constant $\rho_1\in\C$ such that
	\begin{equation}\label{rho1}
		\partial_k\left(\Phi_1^{(1)}
		-\nu_1\Phi_2^{(2)}\right)(x,t,\I\ell_1)
		=\rho_1\Phi_2^{(2)}(x,t,\I\ell_1),\quad x,t\in\R.
	\end{equation}
	Combining \eqref{Phi}, \eqref{PsC+}, and \eqref{rho1}, we obtain the following system of equations:
	\begin{equation}\label{spk}
	\begin{split}
		&\partial_k\left(\left(\Psi_2\right)_{22}
		-\nu_1\left(\Psi_2\right)_{12}\right)(0,0,\I\ell_1)=
		\rho_1\left(\Psi_2\right)_{12}(0,0,\I\ell_1),\\
		&\partial_k\left(\left(\Psi_2\right)_{12}
		-\nu_1\left(\Psi_2\right)_{22}\right)(0,0,\I\ell_1)=
		\rho_1\left(\Psi_2\right)_{22}(0,0,\I\ell_1).
	\end{split}
	\end{equation}
	Recalling that $\nu_1^2=1$, we have from \eqref{spk} that
	\begin{equation*}
		\rho_1\left(\nu_1\left(\Psi_2\right)_{12}
		+\left(\Psi_2\right)_{22}\right)(0,0,\I\ell_1)=0,
	\end{equation*}
	which, together with the relation (see \eqref{d-nctc} and the second equation in \eqref{PsC+})
	$$\left(\Psi_2\right)_{12}(0,0,\I\ell_1)
	=\nu_1\left(\Psi_2\right)_{22}(0,0,\I\ell_1),$$
	yields that 
	\begin{equation}\label{rho01}
		\rho_1=0.
	\end{equation}
	Combining \eqref{rho1}, and \eqref{rho01}, we obtain \eqref{pkPhil1}.
	Finally, \eqref{rsl1-b} follows from \eqref{M1L}, \eqref{pkPhil1},
	\eqref{P-nctc}, \eqref{Phi}, and \eqref{DM}.
\end{proof}

Given $M(x,t,k)$, the solution $u(x,t)$ of the original Cauchy problem 
\eqref{nmkdvs}--\eqref{bcs} can be recovered via the large $k$ expansion of $M(x,t,k)$ as follows:
\begin{equation}\label{msol}
\begin{split}
	&u(x,t)=2\I\lim_{k\to\infty}
	kM_{12}(x,t,k),\\
	&u(-x,-t)=-2\I\lim_{k\to\infty}
	kM_{21}(x,t,k).
\end{split}
\end{equation}

Summarizing, we arrive at the following theorem.
\begin{theorem}[Basic Riemann-Hilbert problem]\label{TBRHp}
Consider the initial data $u_0(x)$ such that 
$$
xu_0(x)\in L^1(-\infty,a),\quad 
\left(
u_0(x)-A\cos2Bx
\right)\in L^1(a,\infty),
$$ with respect to the spatial variable $x$, for all fixed $a\in\R$.
Assume that the spectral functions $a_j(k)$, $j=1,2$, and $b(k)$, defined by \eqref{sd}, satisfy conditions described in either Case $\mathrm{I}$, $\mathrm{II}$, or $\mathrm{III}$, or  
Case $\widetilde{\mathrm{I}}$,
$\widetilde{\mathrm{II}}$, or $\widetilde{\mathrm{III}}$. 
Consider the basic Riemann-Hilbert problem, which consists in
finding the $2\times2$ matrix valued function
$M(x,t,k)$ satisfying the following conditions:
\begin{enumerate}
	\item multiplicative jump condition \eqref{Jj};
	\item normalization condition \eqref{Norm};
	\item residue conditions given in Proposition \ref{Rcon};
	\item singularity conditions \eqref{pmB}--\eqref{tpmB} at $k=\pm B$.
\end{enumerate}

Then, the solution $u(x,t)$ of the Cauchy problem \eqref{nmkdvs} 
can be reconstructed in terms of
the solution of the basic Riemann-Hilbert problem by \eqref{msol}.
Moreover, the functions $a_j(k)$, $j=1,2$,
can be calculated in terms of $b(k)$
according to Propositions \ref{za1a2} and \ref{tza1}. 
\end{theorem}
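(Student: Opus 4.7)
The statement of Theorem \ref{TBRHp} is essentially a summary of the properties of the sectionally meromorphic matrix $M(x,t,k)$ defined in \eqref{DM}, together with the reconstruction formula \eqref{msol} for $u(x,t)$. Accordingly, the plan is to verify each of the four Riemann-Hilbert conditions by quoting results already established in Section \ref{istrh}, and then to derive the reconstruction formula as the only genuinely new ingredient.

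First I would treat the jump condition \eqref{Jj}. From the scattering relation \eqref{S} and the representation \eqref{S-m} of $S(k)$, one can express each column of $\Psi_1$ as a linear combination of columns of $\Psi_2$ and vice versa. Dividing by the appropriate $a_j(k)$ according to the two definitions in \eqref{DM}, a direct algebraic manipulation (combined with \eqref{r12} and the determinant identity \eqref{r-ja-j}) gives exactly the jump matrix \eqref{jump}; the exponential factor $e^{\pm(2\I k x + 8\I k^3 t)}$ arises from the passage between $\Phi_j$ and $\Psi_j$ in \eqref{Phi}. The normalization condition \eqref{Norm} is immediate from items (ii)--(iii) of Proposition \ref{pPsi} together with $a_j(k) = 1 + O(k^{-1})$ from item (ii) of Proposition \ref{aj-pr}.

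Next, the residue conditions, the singularity conditions at $k = \pm B$, and the explicit formulas for $a_j(k)$ in terms of $b(k)$ are nothing but restatements of Propositions \ref{Rcon}, \ref{Bcon}, \ref{za1a2}, and \ref{tza1}, respectively, so this part reduces to invoking those propositions in all six cases. This bookkeeping is routine once one keeps track of which column of $M$ carries the pole/singularity in each complex half-plane.

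The main (and only nontrivial) step is the reconstruction formula \eqref{msol}. The plan is to derive the $k \to \infty$ asymptotic expansion
\begin{equation*}
M(x,t,k) = I + \frac{M_1(x,t)}{k} + O(k^{-2}), \quad k \to \infty,
\end{equation*}
and to identify the off-diagonal entries of $M_1$ with $u(x,t)$ and $u(-x,-t)$. To this end, substitute a formal expansion $\Psi_j(x,t,k) = N_\pm(x,t,k) + \Psi_j^{(1)}(x,t)/k + O(k^{-2})$ (using \eqref{N+-}, noting that $N_\pm \to I$ as $k\to\infty$) into the first equation of the Lax pair \eqref{LPnmkdv}, i.e.\ $\Phi_x + \I k\sigma_3\Phi = U\Phi$. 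Matching powers of $k$ and reading off the off-diagonal entries shows that the $(1,2)$ entry of $M_1(x,t)$ equals $u(x,t)/(2\I)$, while the $(2,1)$ entry equals $-u(-x,-t)/(2\I)$. This is a standard computation for AKNS-type Lax pairs; the only subtlety here is to ensure that the factors in the definition \eqref{DM} (division by $a_j(k) = 1 + O(k^{-1})$) do not perturb the coefficient of $k^{-1}$ in the off-diagonal entries, which is the case because the diagonal of $N_\pm$ is $I$ and $a_j(k) = 1 + O(k^{-1})$. I expect the routine verifications of jump/normalization to go smoothly; the one place to be careful is in keeping the cosine-to-exponential identities straight when matching the Volterra expansion \eqref{Psi1}--\eqref{Psi2} with the Lax-pair ansatz at the order $k^{-1}$, since the background matrices $N_\pm$ themselves carry an implicit $k$-dependence in their off-diagonal entries that must be expanded before extracting the coefficient of $k^{-1}$.
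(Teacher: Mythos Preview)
Your proposal is correct and matches the paper's approach: the paper introduces Theorem \ref{TBRHp} with the words ``Summarizing, we arrive at the following theorem'' and gives no separate proof, treating each item as already established by the preceding discussion (jump condition \eqref{Jj}, normalization \eqref{Norm}, Propositions \ref{Bcon}, \ref{Rcon}, \ref{za1a2}, \ref{tza1}, and the stated reconstruction formula \eqref{msol}). Your sketch of the derivation of \eqref{msol} via the $k^{-1}$ coefficient in the Lax pair expansion is a standard computation that the paper simply takes for granted, so you are if anything slightly more thorough than the source.
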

\begin{remark}[Uniqueness]
	Observe that the solution of the basic Riemann-Hilbert problem
	is unique, if it exists.
	Indeed, taking into account jump condition \eqref{Jj},
	residue conditions \eqref{rskj}--\eqref{rsl1},
	and singularity conditions \eqref{pmB}--\eqref{tpmB}, we conclude that
	$\det M(x,t,k)$ is an entire function in every 
	Case $\mathrm{I}$--$\mathrm{III}$ and 
	$\widetilde{\mathrm{I}}$--$\widetilde{\mathrm{III}}$.
	Since $\det M(x,t,k)\to 1$, see \eqref{Norm}, we have that $\det M(x,t,k)=1$ for all $x,t\in\R$ and $k\in\C$ by the Liouville theorem.
	Therefore, if $M(x,t,k)$ and $\tilde{M}(x,t,k)$ are two solutions of the basic Riemann-Hilbert problem, we can consider the product 
	$\left(M\tilde{M}^{-1}\right)(x,t,k)$, which is continuous along $\R$ and is bounded at every singular point (see \eqref{rskj}--\eqref{rsl1} and \eqref{pmB}--\eqref{tpmB}).
	Finally, using normalization condition \eqref{Norm} and the Liouville theorem, 
	we conclude that 
	$
	\left(M\tilde{M}^{-1}\right)(x,t,k)=I,
	$
	for all $x,t\in\R$ and $k\in\C$.
\end{remark}

Notice that the solution of the Basic Riemann-Hilbert problem, if it exists, satisfies the symmetry conditions described in the next proposition.
\begin{proposition}[Symmetries]
	Assume that $M(x,t,k)$ is a solution of the basic Riemann-Hilbert problem.
	Then it satisfies the following symmetry conditions
	(cf.\,\,\cite[Equation (62)]{XF23}, \cite[Proposition 4]{RS21-DE}):
	\begin{equation}\label{M-symm}
		M(x,t,k)=
		\begin{cases}
			\sigma_1\overline{M}(-x,-t,-\bar{k})\sigma_1^{-1}
			\begin{pmatrix}
				\frac{1}{a_1(k)}&0\\
				0&a_1(k)
			\end{pmatrix},\quad k\in\C^+,\\
			\sigma_1\overline{M}(-x,-t,-\bar{k})\sigma_1^{-1}
			\begin{pmatrix}
				a_2(k)&0\\
				0&\frac{1}{a_2(k)}
			\end{pmatrix},\quad k\in\C^-,
		\end{cases}
	\end{equation}
	and
	\begin{equation}
		\label{M-symm-1}
		M(x,t,k)=
		\begin{cases}
			\sigma_1M(-x,-t,k)\sigma_1^{-1}
			\begin{pmatrix}
				\frac{1}{a_1(k)}&0\\
				0&a_1(k)
			\end{pmatrix},\quad k\in\C^+,\\
			\sigma_1M(-x,-t,k)\sigma_1^{-1}
			\begin{pmatrix}
				a_2(k)&0\\
				0&\frac{1}{a_2(k)}
			\end{pmatrix},\quad k\in\C^-.
		\end{cases}
	\end{equation}
\end{proposition}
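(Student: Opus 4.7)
The plan is to invoke the uniqueness of the basic Riemann--Hilbert problem, established in the preceding Remark, by verifying that the right-hand side of each claimed identity is itself a solution of the same problem. For \eqref{M-symm} define
\[
\tilde M(x,t,k) := \sigma_1\,\overline{M}(-x,-t,-\bar k)\,\sigma_1^{-1}\, D(x,t,k),
\]
with $D(x,t,k) = \mathrm{diag}(1/a_1(k),\, a_1(k))$ for $k\in\C^+$ and $D(x,t,k) = \mathrm{diag}(a_2(k),\, 1/a_2(k))$ for $k\in\C^-$; for \eqref{M-symm-1} use the same $D$ but replace $\overline{M}(-x,-t,-\bar k)$ by $M(-x,-t,k)$, invoking the first symmetry of \eqref{Psi-sym} in place of the second. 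Since $\C^{\pm}$ are invariant under $k\mapsto -\bar k$, $\tilde M$ is sectionally meromorphic, and the goal is to show $\tilde M$ meets every requirement of Theorem \ref{TBRHp}; uniqueness then forces $M=\tilde M$.

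The core computation is the jump. For $k\in\R$ one has $-\bar k = -k$, and the relation $M_+(-x,-t,-k) = M_-(-x,-t,-k)\, J(-x,-t,-k)$ gives
\[
\tilde M_+(x,t,k) = \tilde M_-(x,t,k)\,\cdot\, D_-^{-1}(k)\, \sigma_1\,\overline{J(-x,-t,-k)}\,\sigma_1^{-1}\, D_+(k).
\]
Substituting the explicit form \eqref{jump} of $J$ and applying the symmetries $r_j(k)=\bar r_j(-k)$ from \eqref{rjs}, the phases $e^{\pm 2\I kx \pm 8\I k^3t}$ are invariant under the combined transformation, while the $\sigma_1$ conjugation swaps the off-diagonal entries. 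The final reduction to $J(x,t,k)$ uses the identities $r_1 a_1/a_2 = r_2$, $r_2 a_2/a_1 = r_1$, and $1+r_1r_2 = 1/(a_1a_2)$ from \eqref{r-ja-j}.

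Normalization is immediate: $\overline{M}(-x,-t,-\bar k)\to I$ and $D(x,t,k)\to I$ by Proposition \ref{aj-pr}(ii), while $\sigma_1 I \sigma_1^{-1} = I$. For the residue conditions, one uses that the zeros of $a_1(k)$ are symmetric under $k\mapsto -\bar k$ (purely imaginary in Cases $\mathrm{I}$, $\mathrm{III}$, $\widetilde{\mathrm{I}}$, $\widetilde{\mathrm{III}}$, or paired as $p_1,-\bar p_1$ in Cases $\mathrm{II}$, $\widetilde{\mathrm{II}}$), that the norming constants $\gamma_j,\eta_1,\nu_1$ are real by \eqref{nct}, and that the symmetry $a_1(k)=\bar a_1(-\bar k)$ propagates to the derivatives, yielding $a_1'(k) = -\overline{a_1'(-\bar k)}$ and analogous relations for the second and third derivatives required in the double-pole case. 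These Schwarz-reflection identities convert the residue formulas \eqref{rskj}--\eqref{rsl1} for $M^{(1)}$ into the same formulas for $\tilde M^{(1)}$. The singularity conditions \eqref{pmB}--\eqref{tpmB} at $k=\pm B$ transform analogously, with $\sigma_1$ swapping the entries of the connection matrices $Q_\pm$ and the diagonal factor $D$ absorbing the $a_2'(\pm B)$-dependent normalization through Proposition \ref{aj-pr}(iii).

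The main obstacle lies in Case $\mathrm{III}$ (and $\widetilde{\mathrm{III}}$), where the double-pole residue condition \eqref{rsl1-b} involves $\partial_k M^{(2)}$ and the combination $a_1'''/a_1''$: one must verify that the extra terms produced by differentiating the factor $D(x,t,k)$ at $k=\I\ell_1$ vanish by virtue of $a_1(\I\ell_1)=a_1'(\I\ell_1)=0$, and that the Schwarz-reflection relations linking $a_1''(\I\ell_1)$ and $a_1'''(\I\ell_1)$ are tracked through the complex conjugation with the correct sign. Once this bookkeeping is in place, uniqueness of the Riemann--Hilbert problem closes the proof.
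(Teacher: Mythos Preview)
Your proposal is correct and follows essentially the same route as the paper: define the right-hand side as a candidate solution, verify it satisfies the jump, normalization, residue, and singularity conditions of the basic Riemann--Hilbert problem, and then invoke uniqueness. You spell out the residue and singularity checks (including the double-pole bookkeeping in Case~$\mathrm{III}$) in more detail than the paper, which simply asserts that these follow from the symmetries in item~(iii) of Proposition~\ref{aj-pr}.
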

\begin{proof}
	Combining \eqref{r12}, \eqref{rjs}, and \eqref{r-ja-j}, we conclude that
	the jump matrix $J(x,t,k)$ satisfies the following symmetry conditions:
	\begin{equation}\label{J-symm}
		\sigma_1\overline{J}(-x,-t,-k)\sigma_1^{-1}
		=\begin{pmatrix}
			a_2(k)&0\\
			0&\frac{1}{a_2(k)}
		\end{pmatrix}
		J(x,t,k)
		\begin{pmatrix}
			a_1(k)&0\\
			0&\frac{1}{a_1(k)}
		\end{pmatrix},\quad k\in\R\setminus\{-B,B\},
	\end{equation}
	and
	\begin{equation}\label{J-symm-1}
		\sigma_1J(-x,-t,k)\sigma_1^{-1}
		=\begin{pmatrix}
			a_2(k)&0\\
			0&\frac{1}{a_2(k)}
		\end{pmatrix}
		J(x,t,k)
		\begin{pmatrix}
			a_1(k)&0\\
			0&\frac{1}{a_1(k)}
		\end{pmatrix},\quad k\in\R\setminus\{-B,B\},
	\end{equation}
	which imply that the right-hand sides of \eqref{M-symm} and \eqref{M-symm-1} satisfy the jump condition \eqref{Jj}.
	Using the symmetry conditions given in item (iii) of Proposition \ref{aj-pr}, we conclude that 
	the right-hand sides of \eqref{M-symm} and \eqref{M-symm-1} satisfy the singularity conditions as prescribed by Propositions \ref{Bcon} and \ref{Rcon}.
	Finally, taking into account the uniqueness of the solution of the basic Riemann-Hilbert problem, we arrive at the relations \eqref{M-symm} and \eqref{M-symm-1}.
\end{proof}

\section{Two-soliton solutions}\label{twsol}

Here we consider Cases $\widetilde{\mathrm{I}}$--$\widetilde{\mathrm{III}}$
in the reflectionless case, i.e., with $b(k)=0$
(recall that \eqref{b+-B} yields that if $b(k)=0$, then $a_2(\pm B)=0$).
Recalling item (iii) of Proposition \ref{tza1} and Corollary \ref{rlc},
we obtain the following equations for the spectral functions $a_j(k)$, $j=1,2$:
\begin{equation}\label{a1rc}
	a_1(k)=\frac{(k-z_1)(k-z_2)}{k^2-B^2},\quad
	a_2(k)=\frac{k^2-B^2}{(k-z_1)(k-z_2)},
\end{equation}
where $(z_1,z_2)$ is given in \eqref{z1z2-t} with $k_j$, $j=1,2$, $p_1$, and $\ell_1$ given by \eqref{zrlc}.
Since $b(k)=0$, the basic Riemann-Hilbert problem for $M$
has no jump condition across $\R$ (see \eqref{jump} and \eqref{r12}),
and therefore the first column of $M$ is analytic in the neighborhoods of
$k=\pm B$, while the second column has simple poles at $k=\pm B$,
see \eqref{tpmB}.
These imply that
$M$ satisfies the following residue conditions at $k=\pm B$:
\begin{subequations}
	\label{res+-B}
\begin{align}
	\label{res+-Ba}
	&\underset{k=B}{\operatorname{Res}} M^{(2)}(x,t,k)=
	-\I\frac{A}{4}
	e^{-2\I Bx-8\I B^3t}M^{(1)}(x,t,B),\\
	\label{res+-Bb}
	&\underset{k=-B}{\operatorname{Res}} M^{(2)}(x,t,k)=
	-\I\frac{A}{4}
	e^{2\I Bx+8\I B^3t}M^{(1)}(x,t,-B).
\end{align}
\end{subequations}
%Here we have used that $a_2^\prime(B)=\frac{2B}{(B-z_1)(B-z_2)}$ and
%$(B-z_1)^2(B-z_2)^2=-\frac{A^2B^2}{4}$, see \eqref{a1rc} and \eqref{Bz1z2}.

Thus, the Riemann-Hilbert problem for $M(x,t,k)$ consists in finding a 
meromorphic $2\times2$ matrix-valued function, 
which satisfies residue conditions \eqref{res+-B} and \eqref{rskj}--\eqref{rsl1} (with $a_1(k)$ given by \eqref{a1rc}),
as well as the normalization condition \eqref{Norm}.
Such a Riemann-Hilbert problem can be solved explicitly, which is discussed in the lemma below in the case of the two pairs of simple zeros.
\begin{lemma}\label{L1nm}
	Consider the following Riemann-Hilbert problem 
	for $\tilde{M}(x,t,k)$
	\begin{subequations}\label{RHtM}
		\begin{align}
			\label{RHtMa}
			&\underset{k=w_j}{\operatorname{Res}} \tilde{M}^{(1)}(x,t,k)=
			c_j(x,t)\tilde{M}^{(2)}(x,t,w_j),\quad j=1,2,\\
			%&\underset{k=w_2}{\operatorname{Res}} \tilde{M}^{(1)}(x,t,k)=
			%c_2(x,t)\tilde{M}^{(2)}(x,t,w_2),\\
			\label{RHtMb}
			&\underset{k=q_j}{\operatorname{Res}} \tilde{M}^{(2)}(x,t,k)=
			f_j(x,t)\tilde{M}^{(1)}(x,t,q_j),\quad j=1,2,\\
			%&\underset{k=q_2}{\operatorname{Res}} \tilde{M}^{(2)}(x,t,k)=
			%d_2(x,t)\tilde{M}^{(1)}(x,t,q_2),\\
			& \tilde{M}(x,t,k)\to I,\quad k\to\infty,
		\end{align}
	\end{subequations}
	with some functions $c_j(x,t)$, $f_j(x,t)$, $j=1,2$, and 
	pairwise different complex numbers $w_j$, $q_j$, $j=1,2$
	(here we assume that $\tilde{M}^{(1)}$ and $\tilde{M}^{(2)}$ have simple poles at the considered points).
	Introduce the $2\times2$ matrix $N(x,t)$, whose elements have the following form:
	\begin{equation}\label{Nij}
		N_{ij}(x,t)=\frac{\left(\xi_j^T\zeta_i\right)(x,t)}{q_i-w_j},
		\quad i,j=1,2,
	\end{equation}
	where
	\begin{subequations}
		\label{xi-zeta}
	\begin{align}
		\label{xi-zeta-a}
		&\xi_j(x,t)=\begin{pmatrix}
			\frac{(-1)^j(w_2-w_1)}{(w_j-q_1)(w_j-q_2)}c_j(x,t)
			\\1
		\end{pmatrix}\quad j=1,2,\\
		\label{xi-zeta-b}
		&\zeta_j(x,t)=\begin{pmatrix}
			\frac{(-1)^j(q_1-q_2)}{(q_j-w_1)(q_j-w_2)}f_j(x,t)
			\\1
		\end{pmatrix}
		\quad j=1,2.
	\end{align}
	\end{subequations}
	
	Then problem \eqref{RHtM} can be solved explicitly 
	by \eqref{Mtrep}, \eqref{Ajz}, and \eqref{zjsys},
	for all values of the parameters $x,t$ such that $\det N(x,t)\neq 0$.
	Moreover, the large $k$ limit of the 
	$(1,2)$ and $(2,1)$ elements of $\tilde{M}$
	can be found as follows:
	\begin{equation}\label{M12sol}
		\lim_{k\to\infty}k\tilde{M}_{12}(x,t,k)=
		\frac{\det N_1(x,t)}{\det N(x,t)},
		\quad \det N(x,t)\neq0,
	\end{equation}
	and
	\begin{equation}\label{M21sol}
		\lim_{k\to\infty}k\tilde{M}_{21}(x,t,k)=
		\frac{\det N_2(x,t)}{\det N(x,t)},
		\quad \det N(x,t)\neq0,
	\end{equation}
	where ($\xi_{j,1}$ and $\zeta_{j,1}$ denote 
	the $(1,1)$ element of $\xi_j$ and $\zeta_j$, $j=1,2$, respectively)
	\begin{equation}\label{N-1-xi-zeta}
		N_1(x,t)=
		\left(
		\begin{array}{@{}c|c@{}}
			N(x,t) &
			\begin{matrix}
				\zeta_{1,1}(x,t)\\
				\zeta_{2,1}(x,t)
			\end{matrix}\\
			\hline
			\begin{matrix}
				1&1
			\end{matrix}
			&0
		\end{array}
		\right),
	\end{equation}
	and
	\begin{equation*}
		%\label{N-2-xi-zeta}
		N_2(x,t)=
		\left(
		\begin{array}{@{}c|c@{}}
			N(x,t) &
			\begin{matrix}
				1\\
				1
			\end{matrix}\\
			\hline
			\begin{matrix}
				\xi_{1,1}(x,t)&\xi_{2,1}(x,t)
			\end{matrix}
			&0
		\end{array}
		\right).
	\end{equation*}
\end{lemma}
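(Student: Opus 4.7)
Since the reflection coefficients vanish, the Riemann-Hilbert problem \eqref{RHtM} has trivial jump, so $\tilde M(x,t,k)$ must be a rational function of $k$ with exactly the prescribed simple poles and with $\tilde M(x,t,k)\to I$ at infinity. My plan is to reduce \eqref{RHtM} to a $2\times 2$ linear algebra problem and solve it by Cramer's rule.

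First I would make the ansatz
$$\tilde M(x,t,k) = I + \sum_{j=1}^2 \frac{\alpha_j(x,t)\, e_1^T}{k-w_j} + \sum_{j=1}^2 \frac{\beta_j(x,t)\, e_2^T}{k-q_j},$$
with $\alpha_j,\beta_j\in\C^2$ to be determined; this is the most general meromorphic $2\times 2$ matrix compatible with the pole structure of the two columns and with the normalization at infinity. Substituting into \eqref{RHtMa}--\eqref{RHtMb} yields
\begin{align*}
\alpha_i &= c_i\Bigl(e_2 + \sum_{j=1}^2 \tfrac{\beta_j}{w_i-q_j}\Bigr),\\
\beta_i &= f_i\Bigl(e_1 + \sum_{j=1}^2 \tfrac{\alpha_j}{q_i-w_j}\Bigr).
\end{align*}
Because $e_1$ and $e_2$ point in orthogonal directions, the first components $(\alpha_{j,1},\beta_{j,1})$ decouple completely from the second components $(\alpha_{j,2},\beta_{j,2})$, giving two independent $4\times 4$ linear systems, each reducing after elimination of $\beta$ to a $2\times 2$ linear system for $\alpha_{j,1}$ and for $\alpha_{j,2}$, respectively.

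Next, eliminating $\beta_j$ produces coefficient matrices whose entries involve sums $\sum_l f_l/[(w_i-q_l)(q_l-w_j)]$. Using the elementary partial-fraction identity
$$\frac{1}{(w_i-q_l)(q_l-w_j)} = \frac{1}{w_i-w_j}\Bigl(\frac{1}{q_l-w_j}-\frac{1}{q_l-w_i}\Bigr),$$
and its symmetric counterpart, one can repackage the system in the compact form $N(x,t)\,(\cdot)=(\text{data vector})$ with $N$ as in \eqref{Nij}. The cross-ratio prefactors $\frac{(-1)^j(w_2-w_1)}{(w_j-q_1)(w_j-q_2)}$ and $\frac{(-1)^j(q_1-q_2)}{(q_j-w_1)(q_j-w_2)}$ are precisely the scalar coefficients produced by this rearrangement, which is why they enter the definitions \eqref{xi-zeta-a}--\eqref{xi-zeta-b} of $\xi_j$ and $\zeta_j$. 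Cramer's rule then furnishes the explicit closed-form representation of $\tilde M$ whenever $\det N(x,t)\neq 0$.

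For the large-$k$ asymptotics, the ansatz gives
$$\lim_{k\to\infty} k\,\tilde M_{12}(x,t,k) = \sum_{j=1}^2 \beta_{j,1}(x,t),\qquad \lim_{k\to\infty} k\,\tilde M_{21}(x,t,k) = \sum_{j=1}^2 \alpha_{j,2}(x,t).$$
Applying Cramer's rule to the two $2\times 2$ subsystems and summing the resulting components over $j$ produces, after elementary column operations, exactly the bordered determinants $\det N_1(x,t)$ and $\det N_2(x,t)$: the adjoined column records the inhomogeneity of the relevant subsystem, while the bottom row of ones performs the summation over $j$. The only genuine obstacle is combinatorial bookkeeping; tracking the signs, the partial-fraction coefficients, and the two decoupled subsystems is what matches the final expressions to the compact presentation in terms of $N$, $\xi_j$, $\zeta_j$, $N_1$, $N_2$ on the nose. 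No analytic subtlety arises away from the locus $\det N(x,t)=0$, since the underlying linear system is then nonsingular.
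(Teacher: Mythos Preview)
Your approach is correct and would complete the proof, but it takes a different route from the paper's. The paper represents $\tilde M$ in the factored (``dressing'') form
\[
\tilde M(x,t,k)=\Bigl(I+\frac{A_1(x,t)}{k-w_1}+\frac{A_2(x,t)}{k-w_2}\Bigr)
\begin{pmatrix}1&0\\0&\dfrac{(k-w_1)(k-w_2)}{(k-q_1)(k-q_2)}\end{pmatrix},
\]
rather than your column-by-column partial-fraction expansion. With this ansatz the residue conditions at $k=w_j$ force each $A_j$ to be rank one, $A_j=z_j\xi_j^T$, and the vectors $\xi_j$ of \eqref{xi-zeta-a} drop out immediately; multiplying the resulting identity on the right by $\zeta_i$ and evaluating at $k=q_i$ then yields the system $N\,(z_1,z_2)^T=-(\zeta_1,\zeta_2)^T$ directly, with $N$ exactly as in \eqref{Nij}. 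The large-$k$ limits are $z_{1,1}+z_{2,1}$ and $z_{1,2}\xi_{1,1}+z_{2,2}\xi_{2,1}$, and Cramer's rule on this system gives the bordered determinants $N_1$, $N_2$ with no repackaging.

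Your ansatz parametrizes the same family of rational matrices, so the two routes are equivalent. The practical difference is that eliminating $\beta$ from your coupled equations produces a $2\times2$ system whose coefficient matrix has entries $\delta_{ij}-c_i\sum_l f_l/\bigl((w_i-q_l)(q_l-w_j)\bigr)$, which is not literally $N$; matching it to \eqref{Nij}--\eqref{xi-zeta} requires an additional change of variables (essentially identifying your $\alpha_j,\beta_j$ with the paper's $z_j$), not just the partial-fraction identity you cite. Likewise your large-$k$ limits $\sum_j\beta_{j,1}$ and $\sum_j\alpha_{j,2}$ are the right quantities but are expressed in different unknowns, so the determinantal expressions you would first obtain from Cramer's rule are not yet $\det N_1/\det N$ and $\det N_2/\det N$. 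None of this is an error---it is exactly the ``combinatorial bookkeeping'' you flag---but the paper's factored ansatz buys precisely that bookkeeping for free: the rank-one structure and the vectors $\xi_j,\zeta_j$ appear without manipulation.
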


\begin{proof}
	The proof closely follows the methodology presented in
	\cite[Chapter II, \S 5]{FT87}.
	Let us seek the solution $\tilde{M}(x,t,k)$ of the Riemann-Hilbert problem \eqref{RHtM} in the following form:
	\begin{equation}\label{Mtrep}
		\tilde{M}(x,t,k)=\left(I+\frac{A_1(x,t)}{k-w_1}+
		\frac{A_2(x,t)}{k-w_2}\right)
		\begin{pmatrix}
			1&0\\
			0&\frac{(k-w_1)(k-w_2)}{(k-q_1)(k-q_2)}
		\end{pmatrix},
	\end{equation}
	where the $2\times2$ matrices $A_j(x,t)$, $j=1,2$, are yet to be determined.
	Representation \eqref{Mtrep} yields that
	\begin{equation}\label{A-col}
	\begin{split}
		&\underset{k=w_j}{\operatorname{Res}} \tilde{M}^{(1)}(x,t,k)
		=A_j^{(1)}(x,t),\quad j=1,2,\\
		&\tilde{M}^{(2)}(x,t,w_j)=\frac{(-1)^j(w_2-w_1)}{(w_j-q_1)(w_j-q_2)}
		A_j^{(2)}(x,t),\quad j=1,2.
	\end{split}
	\end{equation}
	Combining \eqref{RHtMa} and \eqref{A-col},
	%Recalling the residue conditions at $k=w_j$ given in \eqref{RHtM},
	we conclude that
	\begin{equation}\label{Ajz}
		A_j(x,t)=\left(z_j\xi_j^T\right)(x,t),\quad j=1,2,
	\end{equation}
	with some $2$-vector $z_j(x,t)$ and $\xi_j(x,t)$ given by \eqref{xi-zeta-a}.
	
	Equations \eqref{Mtrep} and \eqref{Ajz} yield
	\begin{equation}\label{sys1}
		I+\frac{\left(z_1\xi_1^T\right)(x,t)}{k-w_1}
		+\frac{\left(z_2\xi_2^T\right)(x,t)}{k-w_2}
		=\left(\tilde{M}^{(1)}(x,t,k),
		\frac{(k-q_1)(k-q_2)}{(k-w_1)(k-w_2)}\tilde{M}^{(2)}(x,t,k)\right).
	\end{equation}
	Multiplying \eqref{sys1} by $\zeta_j$ on the right, see \eqref{xi-zeta-b},
	considering $k=q_j$, and employing \eqref{RHtMb}, we obtain the following system of equations for $z_j$:
	\begin{equation*}
		\zeta_j(x,t)+\frac{\left(\xi_1^T\zeta_j\right)(x,t)}{q_j-w_1}
		z_1(x,t)+\frac{\left(\xi_2^T\zeta_j\right)(x,t)}{q_j-w_2}z_2(x,t)=0,
		\quad j=1,2,
	\end{equation*}
	or, equivalently (recall \eqref{Nij}),
	\begin{equation}\label{zjsys}
		N(x,t)\begin{pmatrix}
			z_1^T\\z_2^T
		\end{pmatrix}(x,t)
		=-\begin{pmatrix}
			\zeta_1^T\\
			\zeta_2^T
		\end{pmatrix}(x,t).
	\end{equation}
	Equation \eqref{zjsys} uniquely determines $z_j(x,t)$, $j=1,2$, for all $x,t$ such that $\det N(x,t)\neq0$, and therefore we obtain the solution of \eqref{RHtM} through \eqref{Mtrep} and \eqref{Ajz}.
	Then, observing that (see \eqref{Mtrep}, \eqref{Ajz}, and \eqref{xi-zeta};
	here $z_j=\left(z_{j,1},z_{j,2}\right)^T$)
	\begin{equation*}
	\begin{split}
		&\lim_{k\to\infty}k\tilde{M}_{12}(x,t,k)=
		\left((A_1)_{12}+(A_2)_{12}\right)(x,t)=
		\left(z_{1,1}+z_{2,1}\right)(x,t),\\
		&\lim_{k\to\infty}k\tilde{M}_{21}(x,t,k)=
		\left((A_1)_{21}+(A_2)_{21}\right)(x,t)=
		\left(z_{1,2}\xi_{1,1}+z_{2,2}\xi_{2,1}\right)(x,t),
	\end{split}
	\end{equation*}
	and applying the Cramer's rule to \eqref{zjsys}, we arrive at \eqref{M12sol} and \eqref{M21sol}.
\end{proof}

Now let us examine the Riemann-Hilbert problem having two simple poles for the second column, and one second order pole for the first column.
\begin{lemma}\label{L2nm}
	Consider the following Riemann-Hilbert problem 
	for $\tilde{M}(x,t,k)$
	\begin{subequations}\label{dRHtM}
		\begin{align}
			\label{dRHtMa}
			&\underset{k=w_1}{\operatorname{Res}} \tilde{M}^{(1)}(x,t,k)=
			\tilde{c}_1(x,t)\partial_k\tilde{M}^{(2)}(x,t,w_1)
			+\tilde{c}_2(x,t)\tilde{M}^{(2)}(x,t,w_1),\\
			\label{dRHtMb}
			&\underset{k=w_1}{\operatorname{Res}}
			(k-w_1)\tilde{M}^{(1)}(x,t,k)=
			\tilde{c}_3(x,t)\tilde{M}^{(2)}(x,t,w_1),\\
			\label{dRHtMc}
			&\underset{k=q_j}{\operatorname{Res}} \tilde{M}^{(2)}(x,t,k)=
			\tilde{f}_j(x,t)\tilde{M}^{(1)}(x,t,q_j),\quad j=1,2,\\
			& \tilde{M}(x,t,k)\to I,\quad k\to\infty,
		\end{align}
	\end{subequations}
	with some functions $\tilde{c}_j(x,t)$, 
	$\tilde{f}_i(x,t)$, $j=1,2,3$, $i=1,2$, and 
	pairwise different complex numbers $w_1$, $q_j$, $j=1,2$
	(here we assume that $\tilde{M}^{(1)}$ has a double pole at $k=w_1$ and $\tilde{M}^{(2)}$ has simple poles at $k=q_j$, $j=1,2$).
	Introduce the $2\times2$ matrix $\tilde{N}(x,t)$,
	whose elements have the following form:
	\begin{equation}\label{tNij}
		\tilde{N}_{j1}(x,t)=
		\frac{\left(\tilde{\xi}_1^T\tilde{\zeta}_j\right)(x,t)}
		{q_j-w_1},\quad
		\tilde{N}_{j2}(x,t)=
		\frac{\left(\tilde{\xi}_3^T\tilde{\zeta}_j\right)(x,t)}
		{(q_j-w_1)^2}
		+\frac{\left(\tilde{\xi}_2^T\tilde{\zeta}_j\right)(x,t)}
		{q_j-w_1},\quad j=1,2,
	\end{equation}
	where
	\begin{subequations}
		\label{txi-zeta}
		\begin{align}
			\label{txi-zeta-a}
			&\tilde{\xi}_j(x,t)=\begin{pmatrix}
				\frac{\tilde{c}_j(x,t)}{(w_1-q_1)(w_1-q_2)}
				\\1
			\end{pmatrix}\quad j=1,3,\quad
			\tilde{\xi}_2(x,t)=\begin{pmatrix}
				\frac{\tilde{c}_1(x,t)(q_1+q_2-2w_1)}{(w_1-q_1)^2(w_1-q_2)^2}
				+\frac{\tilde{c}_2(x,t)}{(w_1-q_1)(w_1-q_2)}
				\\0
			\end{pmatrix},\\
			\label{txi-zeta-b}
			&\tilde{\zeta}_j(x,t)=\begin{pmatrix}
				\frac{(-1)^j(q_1-q_2)}{(q_j-w_1)^2}
				\tilde{f}_j(x,t)
				\\1
			\end{pmatrix}
			\quad j=1,2.
		\end{align}
	\end{subequations}
	
	Then problem \eqref{RHtM} can be solved explicitly
	by \eqref{dMtrep}, \eqref{dAjz}, and \eqref{dzjsys},
	for all values of the parameters $x,t$ such that $\det\tilde{N}(x,t)\neq 0$.
	Moreover, the large $k$ limit of the 
	$(1,2)$ and $(2,1)$ elements of $\tilde{M}$
	can be found as follows:
	\begin{equation}\label{tM12sol}
		\lim_{k\to\infty}k\tilde{M}_{12}(x,t,k)=
		\frac{\det\tilde{N}_1(x,t)}{\det\tilde{N}(x,t)},
		\quad \det\tilde{N}(x,t)\neq0,
	\end{equation}
	and
	\begin{equation}\label{tM21sol}
		\lim_{k\to\infty}k\tilde{M}_{21}(x,t,k)=
		\frac{\det\tilde{N}_2(x,t)}{\det\tilde{N}(x,t)},
		\quad \det\tilde{N}(x,t)\neq0,
	\end{equation}
	where ($\tilde{\xi}_{j,1}$ and $\tilde{\zeta}_{i,1}$ denote 
	the $(1,1)$ element of $\xi_j$ and $\zeta_i$, $j=1,2,3$, $i=1,2$, respectively)
	\begin{equation}\label{tN-1-xi-zeta}
		\tilde{N}_1(x,t)=
		\begin{pmatrix}
			\tilde{N}_{12}&\zeta_{1,1}(x,t)\\
			\tilde{N}_{22}&\zeta_{2,1}(x,t)\\
		\end{pmatrix},
	\end{equation}
	and
	\begin{equation*}
		%\label{N-2-xi-zeta}
		\tilde{N}_2(x,t)=
		\left(
		\begin{array}{@{}c|c@{}}
			\tilde{N}(x,t) &
			\begin{matrix}
				1\\
				1
			\end{matrix}\\
			\hline \\ [-2ex]
			\begin{matrix}
				\tilde{\xi}_{1,1}(x,t)&\tilde{\xi}_{2,1}(x,t)
			\end{matrix}
			&0
		\end{array}
		\right).
	\end{equation*}
\end{lemma}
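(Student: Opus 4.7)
The plan is to adapt the argument of Lemma \ref{L1nm} to the double-pole structure at $k=w_1$. I would first seek $\tilde{M}$ in the rational ansatz
\begin{equation}\label{dMtrep}
\tilde{M}(x,t,k)=\left(I+\frac{A_1(x,t)}{k-w_1}+\frac{A_2(x,t)}{(k-w_1)^2}\right)\begin{pmatrix}1&0\\0&\frac{(k-w_1)^2}{(k-q_1)(k-q_2)}\end{pmatrix},
\end{equation}
with $2\times 2$ matrices $A_1, A_2$ to be determined. This form automatically enforces $\tilde{M}\to I$ at infinity, endows $\tilde{M}^{(1)}$ with a double pole at $w_1$, equips $\tilde{M}^{(2)}$ with simple poles at $q_1, q_2$, and cancels the spurious $(k-w_1)^{-2}$ singularity of the second column through the diagonal factor.

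Next, I would expand $\tilde{M}^{(2)}(k)$ near $k=w_1$ using \eqref{dMtrep}, computing both $\tilde{M}^{(2)}(w_1)$ and $\partial_k\tilde{M}^{(2)}(w_1)$ in terms of the columns $A_1^{(2)}$ and $A_2^{(2)}$. Matching these with \eqref{dRHtMa}--\eqref{dRHtMb} forces the matrices $A_1, A_2$ to take the rank-factored form
\begin{equation}\label{dAjz}
A_1(x,t)=(\tilde{z}_1\tilde{\xi}_1^T+\tilde{z}_2\tilde{\xi}_2^T)(x,t),\qquad A_2(x,t)=(\tilde{z}_2\tilde{\xi}_3^T)(x,t),
\end{equation}
with $\tilde{\xi}_j$ from \eqref{txi-zeta-a} and two-vector unknowns $\tilde{z}_j=A_j^{(2)}$, $j=1,2$. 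The particular shape of $\tilde{\xi}_2$, having vanishing second entry and first entry built from both $\tilde{c}_1$ and $\tilde{c}_2$, arises precisely from the Taylor expansion of $(k-w_1)^2/((k-q_1)(k-q_2))$ at $w_1$, which produces a $(q_1+q_2-2w_1)$ correction.

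With $A_1, A_2$ parametrized by $\tilde{z}_1, \tilde{z}_2$, the ansatz rewrites as
\begin{equation*}
I+\frac{\tilde{z}_1\tilde{\xi}_1^T+\tilde{z}_2\tilde{\xi}_2^T}{k-w_1}+\frac{\tilde{z}_2\tilde{\xi}_3^T}{(k-w_1)^2}=\left(\tilde{M}^{(1)}(x,t,k),\;\frac{(k-q_1)(k-q_2)}{(k-w_1)^2}\tilde{M}^{(2)}(x,t,k)\right).
\end{equation*}
Multiplying both sides on the right by $\tilde{\zeta}_j$ defined in \eqref{txi-zeta-b} and evaluating at $k=q_j$, the right-hand side vanishes by virtue of \eqref{dRHtMc} (the simple poles of $\tilde{M}^{(2)}$ at $q_j$ being absorbed into the factor $(k-q_1)(k-q_2)$), which gives the linear system
\begin{equation}\label{dzjsys}
\tilde{N}(x,t)\begin{pmatrix}\tilde{z}_1^T \\ \tilde{z}_2^T\end{pmatrix}(x,t)=-\begin{pmatrix}\tilde{\zeta}_1^T \\ \tilde{\zeta}_2^T\end{pmatrix}(x,t)
\end{equation}
with $\tilde{N}$ given by \eqref{tNij}. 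When $\det\tilde{N}(x,t)\neq 0$, this uniquely determines $\tilde{z}_1, \tilde{z}_2$. Finally, reading off the large-$k$ limits from \eqref{dMtrep} gives $\lim_{k\to\infty}k\tilde{M}_{12}=\tilde{z}_{1,1}$ and $\lim_{k\to\infty}k\tilde{M}_{21}=\tilde{z}_{1,2}\tilde{\xi}_{1,1}+\tilde{z}_{2,2}\tilde{\xi}_{2,1}$; applying Cramer's rule to \eqref{dzjsys} for these linear combinations and identifying the resulting determinants with $\det\tilde{N}_1$ and $\det\tilde{N}_2$ yields \eqref{tM12sol}--\eqref{tM21sol}.

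The main obstacle is the algebraic bookkeeping at the double pole. Unlike the simple-pole setting of Lemma \ref{L1nm}, differentiating the second column of the ansatz at $k=w_1$ produces a cross-term proportional to $(q_1+q_2-2w_1)A_2^{(2)}$ that mixes information from the second-order residue condition \eqref{dRHtMb} into the first-order one \eqref{dRHtMa}. Tracking this contribution is what forces the three-vector decomposition $(\tilde{\xi}_1,\tilde{\xi}_2,\tilde{\xi}_3)$ with the specific form of $\tilde{\xi}_2$. Once \eqref{dAjz} is correctly identified, the rest of the argument proceeds as a direct transcription of the proof of Lemma \ref{L1nm}.
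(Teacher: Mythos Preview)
Your proposal is correct and follows the paper's proof essentially step for step: the same rational ansatz \eqref{dMtrep}, the same rank-factored representation \eqref{dAjz} obtained by matching \eqref{dRHtMa}--\eqref{dRHtMb} against the Taylor expansion of the second column at $w_1$, and the same reduction to the linear system \eqref{dzjsys} by multiplying on the right by $\tilde{\zeta}_j$ and evaluating at $k=q_j$. Your identification $\tilde{z}_j=A_j^{(2)}$ and your remark about the $(q_1+q_2-2w_1)$ cross-term are exactly what the paper records in \eqref{dA-col1-b} and \eqref{dA_1^1}.
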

\begin{proof}
	We find the solution $\tilde{M}(x,t,k)$ of \eqref{dRHtM} in the following form (cf.~\eqref{Mtrep}):
	\begin{equation}
		\label{dMtrep}
		\tilde{M}(x,t,k)=\left(I+\frac{A_1(x,t)}{k-w_1}+
		\frac{A_2(x,t)}{(k-w_1)^2}\right)
		\begin{pmatrix}
			1&0\\
			0&\frac{(k-w_1)^2}{(k-q_1)(k-q_2)}
		\end{pmatrix},
	\end{equation}
	with some $2\times2$ matrices $A_j(x,t)$, $j=1,2$.
	We conclude from \eqref{dMtrep} that
	\begin{subequations}
		\label{dA-col}
	\begin{align}
		\label{dA-col-a}
		&\underset{k=w_1}{\operatorname{Res}} \tilde{M}^{(1)}(x,t,k)
		=A_1^{(1)}(x,t),\\
		\label{dA-col-b}
		&\underset{k=w_1}{\operatorname{Res}}(k-w_1)\tilde{M}^{(1)}(x,t,k)
		=A_2^{(1)}(x,t),
	\end{align}
	\end{subequations}
	and 
	\begin{subequations}
		\label{dA-col1}
	\begin{align}
		\label{dA-col1-a}
		&\tilde{M}^{(2)}(x,t,w_1)=\frac{A_2^{(2)}(x,t)}{(w_1-q_1)(w_1-q_2)},\\
		\label{dA-col1-b}
		&\partial_k\tilde{M}^{(2)}(x,t,w_1)=
		\frac{A_1^{(2)}(x,t)}{(w_1-q_1)(w_1-q_2)}
		+\frac{(q_1+q_2-2w_1)}{(w_1-q_1)^2(w_1-q_2)^2}A_2^{(2)}(x,t).
	\end{align}
	\end{subequations}
	Equations \eqref{dRHtMb}, \eqref{dA-col-b}, and \eqref{dA-col1-a}
	imply that (recall \eqref{txi-zeta-a})
	\begin{equation}\label{dA_2^1}
		A_2^{(1)}(x,t)=\tilde{\xi}_{3,1}(x,t)A_2^{(2)}(x,t),
	\end{equation}
	while from \eqref{dRHtMa}, \eqref{dA-col-a}, and \eqref{dA-col1}
	we obtain that
	\begin{equation}\label{dA_1^1}
		A_1^{(1)}(x,t)=\tilde{\xi}_{1,1}(x,t)A_1^{(2)}(x,t)
		+\tilde{\xi}_{2,1}(x,t)A_2^{(2)}(x,t).
	\end{equation}
	Combining \eqref{dA_2^1} and \eqref{dA_1^1}, we arrive at the following representations
	for $A_j(x,t)$, $j=1,2$ (cf.~\eqref{Ajz}):
	\begin{equation}\label{dAjz}
		A_1(x,t)=\left(z_1\tilde{\xi}_1^T+z_2\tilde{\xi}_2^T\right)(x,t),\quad
		A_2(x,t)=\left(z_2\tilde{\xi}_3^T\right)(x,t),
	\end{equation}
	with some $2$-vectors $z_j(x,t)$, $j=1,2$.
	
	Equation \eqref{dMtrep}, together with \eqref{dAjz}, implies that
	(cf.~\eqref{sys1})
	\begin{equation}\label{dsys1}
		I+\frac{\left(z_1\tilde{\xi}_1^T+z_2\tilde{\xi}_2^T\right)(x,t)}
		{k-w_1}
		+\frac{\left(z_2\tilde{\xi}_3^T\right)(x,t)}{(k-w_1)^2}
		=\left(\tilde{M}^{(1)}(x,t,k),
		\frac{(k-q_1)(k-q_2)}{(k-w_1)^2}\tilde{M}^{(2)}(x,t,k)\right).
	\end{equation}
	Multiplying \eqref{dsys1} by $\tilde{\zeta}_j$ on the right
	(see \eqref{txi-zeta-b}), taking $k=q_j$, $j=1,2$, and applying 
	\eqref{dRHtMc}, we obtain the following system of equations for $z_j$:
	\begin{equation*}
	\begin{split}
		\tilde{\zeta}_j(x,t)
		+\frac{\left(z_1\tilde{\xi}_1^T\tilde{\zeta}_j
			+z_2\tilde{\xi}_2^T\tilde{\zeta}_j\right)(x,t)}
		{q_j-w_1}
		+\frac{\left(z_2\tilde{\xi}_3^T\tilde{\zeta}_j\right)(x,t)}
		{(q_j-w_1)^2}=0,\quad j=1,2,
	\end{split}
	\end{equation*}
	or, in terms of $\tilde{N}$, see \eqref{tNij} (cf.~\eqref{zjsys})
	\begin{equation}\label{dzjsys}
		\tilde{N}(x,t)\begin{pmatrix}
			z_1^T\\z_2^T
		\end{pmatrix}(x,t)
		=-\begin{pmatrix}
			\tilde{\zeta}_1^T\\
			\tilde{\zeta}_2^T
		\end{pmatrix}(x,t).
	\end{equation}
	System of equations \eqref{dzjsys}, together with \eqref{dMtrep} and \eqref{dAjz}, implies that the Riemann-Hilbert problem \eqref{dRHtM} has a unique solution if $\det\tilde{N}(x,t)\neq0$.
	Finally, using that (see \eqref{dMtrep} and \eqref{dAjz};
	$z_j=\left(z_{j,1},z_{j,2}\right)^T$)
	\begin{equation*}
	\begin{split}
		&\lim_{k\to\infty}k\tilde{M}_{12}(x,t,k)=\left(A_1\right)_{12}(x,t)
		=z_{1,1}(x,t),\\
		&\lim_{k\to\infty}k\tilde{M}_{21}(x,t,k)=\left(A_1\right)_{21}(x,t)
		=\left(z_{1,2}\tilde\xi_{1,1}+z_{2,2}\tilde\xi_{2,1}\right)(x,t),
	\end{split}
	\end{equation*}
	and employing the Cramer's rule to\eqref{dzjsys}, we arrive at \eqref{tM12sol} and \eqref{tM21sol}.
\end{proof}
%In the proposition below, we solve this Riemann-Hilbert problem explicitly
%and obtain, using inverse transform \eqref{msol},
%three new types of two-soliton solutions with step-like oscillating background \eqref{sidR} of the nonlocal mKdV equation.

Having in hand Lemmas \ref{L1nm} and \ref{L2nm},
we solve the basic Riemann-Hilbert problem in the reflectionless case, thus obtaining the two-soliton solutions of the nmKdV equation satisfying
boundary conditions \eqref{bcs}.

\begin{theorem}\label{Thtws}
	Consider $b(k)=0$ and $a_j(k)$, $j=1,2$, given by \eqref{a1rc}.
	Introduce
	\begin{equation}\label{phij}
	\begin{split}
		%\frac{1}{4}\left(A+(-1)^j\sqrt{A^2-16B^2}\right)
		&\phi(x,t)=2Bx+8B^3t,\\
		%&\phi_j(x,t)=-2k_jx+8k_j^3t,\quad j=1,2,\\
		%&\phi_j(x,t)=-\frac{1}{2}\left(A+(-1)^j\sqrt{A^2-16B^2}\right)x
		%+\frac{1}{8}\left(A+(-1)^j\sqrt{A^2-16B^2}\right)^3t,
		%\quad j=1,2,
	\end{split}
	\end{equation}
	and 
	\begin{equation}\label{s1-2}
	\begin{split}
		&s_1=\sqrt{A^2-16B^2},\quad \mbox{for }\,0<B<\frac{A}{4},\\
		&s_2=\sqrt{16B^2-A^2},\quad \mbox{for }\,B>\frac{A}{4}.
	\end{split}
	\end{equation}
	Then the nonlocal mKdV equation admits the following 
	kink-type two-soliton solutions corresponding to Cases 
	$\widetilde{\mathrm{I}}$--$\widetilde{\mathrm{III}}$
	(in the expressions below for these solutions, we consider only $(x,t)\in\R^2$ such that the denominator is nonzero; see also Remark \ref{Blup}).
	
	\begin{description}[font=\normalfont, itemindent=-\parindent]
		\item[\textit{Case} $\widetilde{\mathrm{I}}$]
		introducing 
		\begin{equation}\label{phiaj}
			\phi_j(x,t)=-2k_jx+8k_j^3t,\quad j=1,2,
		\end{equation}
		with $k_j$
		given by \eqref{zrlca}, $j=1,2$,
		the two-soliton solution reads as follows
		(see Figure \ref{fusI}; we drop the arguments of $\phi(x,t)$ and $\phi_j(x,t)$, $j=1,2$,
		for simplicity):
		\begin{equation}\label{usI}
			u(x,t)=
			\frac{A\left(s_1\cos\phi-\frac{1}{2}
				\left(
				A(\gamma_1e^{\phi_1}-\gamma_2e^{\phi_2})
				+s_1(\gamma_1e^{\phi_1}+\gamma_2e^{\phi_2})
				\right)\right)}
			{s_1-s_1(\gamma_1e^{\phi_1}+\gamma_2e^{\phi_2})\cos\phi
				-4B(\gamma_1e^{\phi_1}-\gamma_2e^{\phi_2})\sin\phi
				+\gamma_1\gamma_2s_1e^{\phi_1+\phi_2}},
		\end{equation}
		where $\gamma_j=\pm1$, $j=1,2$;
		\item[\textit{Case} $\widetilde{\mathrm{II}}$]
		introducing
		\begin{equation}\label{phi34}
			\phi_3(x,t)=-\frac{A}{2}\left(x+t\left(12B^2-A^2\right)\right),
			\quad
			\phi_4(x,t)=-\frac{s_2}{2}
			\left(x+t\left(4B^2-A^2\right)\right),
		\end{equation}
		%where $s_2$ is given by \eqref{s1-2},
		the two-soliton solution has the following form
		(see Figure \ref{fusII}; we drop the arguments of $\phi(x,t)$ and $\phi_j(x,t)$, $j=3,4$):
		\begin{equation}\label{usII}
			u(x,t)=\frac{A\left(s_2\cos\phi+\eta_1e^{\phi_3}
				\left(A\sin\phi_4-s_2\cos\phi_4\right)\right)}
				{s_2\left(e^{2\phi_3}+1\right)+2\eta_1e^{\phi_3}
				\left(4B\sin\phi_4\sin\phi-s_2\cos\phi_4\cos\phi\right)},
		\end{equation}
		with $\eta_1=\pm 1$;
		\item[\textit{Case} $\widetilde{\mathrm{III}}$]
		introducing
		\begin{equation}\label{phi5}
			\phi_5(x,t)=-2\ell_1x+8\ell_1^3t,
		\end{equation}
		where $\ell_1$ is defined in \eqref{zrlcc}, the two-soliton solution is given by (see Figure \ref{fusIII}; we drop the arguments of $\phi(x,t)$ and $\phi_5(x,t)$)
		\begin{equation}\label{usIII}
			u(x,t)=\frac{A\left(\cos\phi-\nu_1e^{\phi_5}
				-\frac{\nu_1}{2}e^{\phi_5}\left(Ax-\frac{3}{4}A^3t\right)
				\right)}
			{e^{2\phi_5}-\nu_1e^{\phi_5}\left(2\cos\phi
				+\left(Ax-\frac{3}{4}A^3t\right)\sin\phi\right)+1},
		\end{equation}
		where $\nu_1=\pm 1$.
	\end{description}
\end{theorem}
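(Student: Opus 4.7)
The plan is to solve the basic Riemann-Hilbert problem of Theorem \ref{TBRHp} explicitly in the reflectionless case by reducing it to the model problems treated in Lemmas \ref{L1nm} and \ref{L2nm}, and then recovering $u(x,t)$ via \eqref{msol}. First I would observe that when $b(k)\equiv 0$ the jump matrix \eqref{jump} degenerates to the identity (see \eqref{r12}), so $M(x,t,k)$ becomes globally meromorphic in $k\in\C$. Moreover, the leading-order asymptotics \eqref{b+-B} force $a_2(\pm B)=0$, placing us automatically in the framework of Cases $\widetilde{\mathrm{I}}$--$\widetilde{\mathrm{III}}$; consequently the singularity conditions \eqref{tpmB} at $k=\pm B$ reduce to the simple-pole residue conditions \eqref{res+-B} for $M^{(2)}$. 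Together with the residue conditions from Proposition \ref{Rcon} for $M^{(1)}$ at the zeros of $a_1(k)$ in $\C^+$ and the normalization \eqref{Norm}, we are left with a purely meromorphic RHP of exactly the type solved by Lemmas \ref{L1nm}--\ref{L2nm}.

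Next, in Cases $\widetilde{\mathrm{I}}$ and $\widetilde{\mathrm{II}}$ I would invoke Lemma \ref{L1nm} with $q_1=B$, $q_2=-B$ and $f_j(x,t)=-\I\frac{A}{4}e^{\mp\I\phi(x,t)}$ read off from \eqref{res+-Ba}--\eqref{res+-Bb}, and $(w_1,w_2)=(\I k_1,\I k_2)$ or $(p_1,-\bar p_1)$, with $c_j$ given by \eqref{rskj} or \eqref{rsp1}. Using the explicit factorization \eqref{a1rc} of $a_1(k)$, the values $a_1'(\I k_j)$, $a_1'(p_1)$, $a_1'(-\bar p_1)$ are obtained in closed form in terms of $A$, $B$ and $s_1$ or $s_2$ from \eqref{s1-2}. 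Substituting the resulting vectors $\xi_j,\zeta_j$ of \eqref{xi-zeta} into the determinantal formula \eqref{M12sol} and applying $u(x,t)=2\I\lim_{k\to\infty} kM_{12}(x,t,k)$ expresses $u$ as a ratio of two $2\times 2$ determinants in $(x,t)$. Case $\widetilde{\mathrm{III}}$ is treated analogously via Lemma \ref{L2nm} with the double pole $w_1=\I\ell_1=\I A/4$; here the constants $\tilde c_1,\tilde c_2,\tilde c_3$ follow from \eqref{rsl1-a}--\eqref{rsl1-b}, which requires computing $a_1''(\I\ell_1)$ and $a_1'''(\I\ell_1)/a_1''(\I\ell_1)$ directly from \eqref{a1rc}.

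The main obstacle will be the algebraic simplification of these determinantal expressions into the compact trigonometric/exponential forms \eqref{usI}--\eqref{usIII}. The symmetry $a_j(k)=\bar a_j(-\bar k)$ of Proposition \ref{aj-pr}(iii), combined with $\gamma_j,\eta_1,\nu_1\in\{\pm 1\}$ from \eqref{nct}, is what ensures that conjugate data at $\pm B$ and at the symmetric pairs of zeros of $a_1(k)$ combine into real phases: the oscillatory $\phi(x,t)=2Bx+8B^3t$ comes from the factors $e^{\pm\I\phi}$ at $q_1,q_2=\pm B$, while the real phases $\phi_j$ in \eqref{phiaj}, \eqref{phi34}, \eqref{phi5} emerge from the exponents $-2\I z x+8\I z^3 t$ evaluated at $z=\I k_j$, $z=p_1$, $z=\I\ell_1$. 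After pulling common scalar factors out of numerator and denominator, matching to \eqref{usI}--\eqref{usIII} reduces to verifying a handful of identities among $\cos\phi$, $\sin\phi$, $e^{\phi_j}$, and the parameters $A,B,s_1,s_2$; routine but delicate bookkeeping then completes the proof.
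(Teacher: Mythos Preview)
Your proposal is correct and follows essentially the same approach as the paper: reduce the reflectionless basic RHP to the model problems of Lemmas~\ref{L1nm} and~\ref{L2nm} with exactly the pole locations and residue data you identify, compute the vectors $\xi_j,\zeta_j$ (resp.\ $\tilde\xi_j,\tilde\zeta_j$) and the determinants $\det N,\det N_1$ (resp.\ $\det\tilde N,\det\tilde N_1$), and extract $u$ via \eqref{msol} and \eqref{M12sol}/\eqref{tM12sol}. The paper's proof carries out precisely these computations, using in particular the identities $(B+\I k_1)(B+\I k_2)=\I AB/2$ and $(B-p_1)(B+\bar p_1)=-\I AB/2$ to simplify $\xi_j$ to the clean form $\xi_j=(\gamma_je^{\phi_j},1)^T$ etc., which is the key step making the determinantal simplification tractable.
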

\begin{figure}
	%\begin{minipage}[h]{0.99\linewidth}
	\centering{\includegraphics[scale=0.6]
		{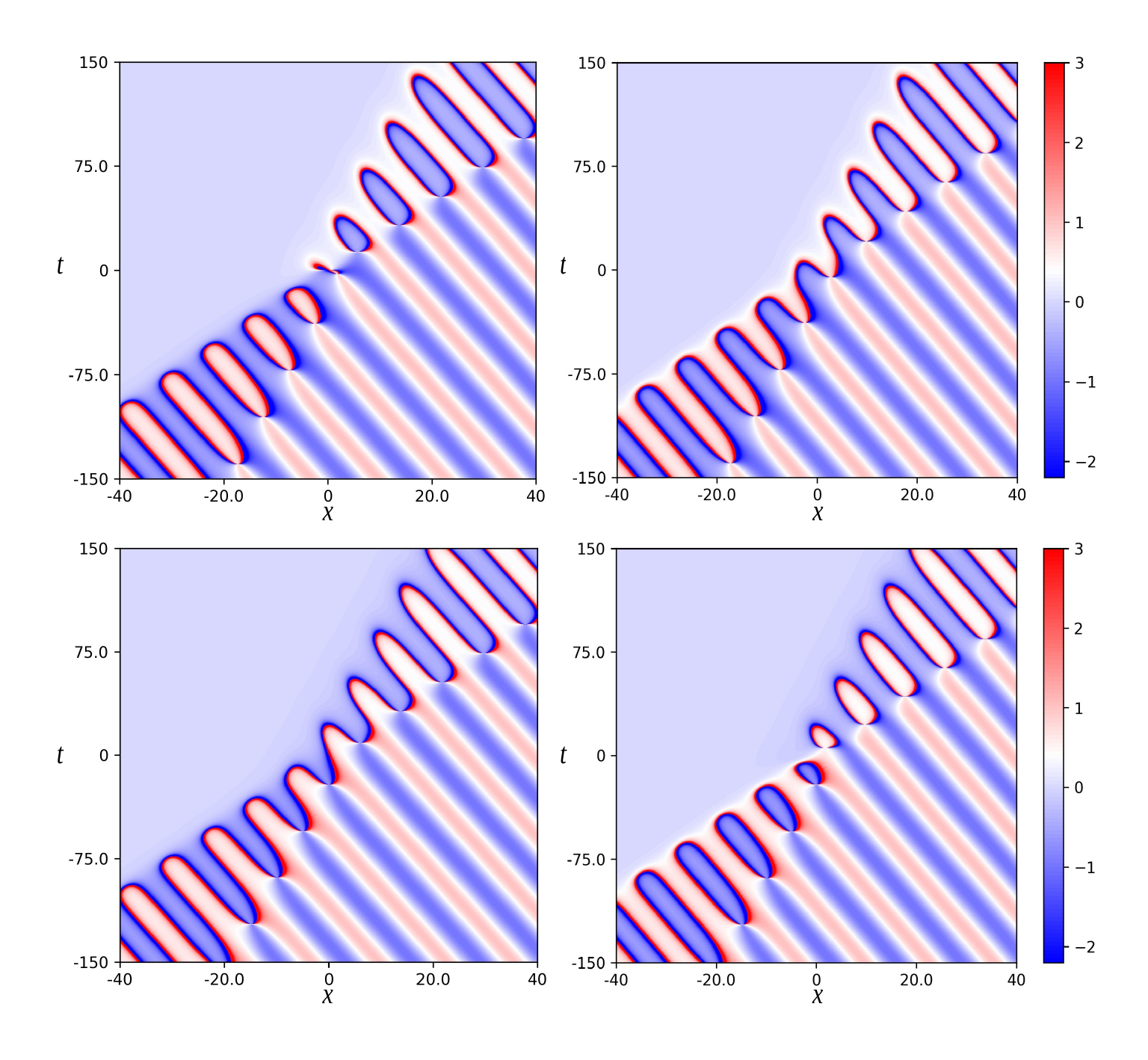}}
	\caption{The graphs of the two-soliton solutions, which correspond to
		two pure imaginary simple zeros at $k=\I k_j$, $j=1,2$, of $a_1(k)$,
		see \eqref{zrlca}.
		These graphs are plotted from the exact formula 
		\eqref{usI} with $A=1$, $B=0.243$,
		and $(\gamma_1,\gamma_2)=(1,1)$ (top left),
		$(\gamma_1,\gamma_2)=(1,-1)$ (top right),
		$(\gamma_1,\gamma_2)=(-1,1)$ (bottom left),
		and $(\gamma_1,\gamma_2)=(-1,-1)$ (bottom right).}
	\label{fusI}
	%\end{minipage}
\end{figure}
\begin{figure}
	%\begin{minipage}[h]{0.99\linewidth}
	\centering{\includegraphics[scale=0.6]
		{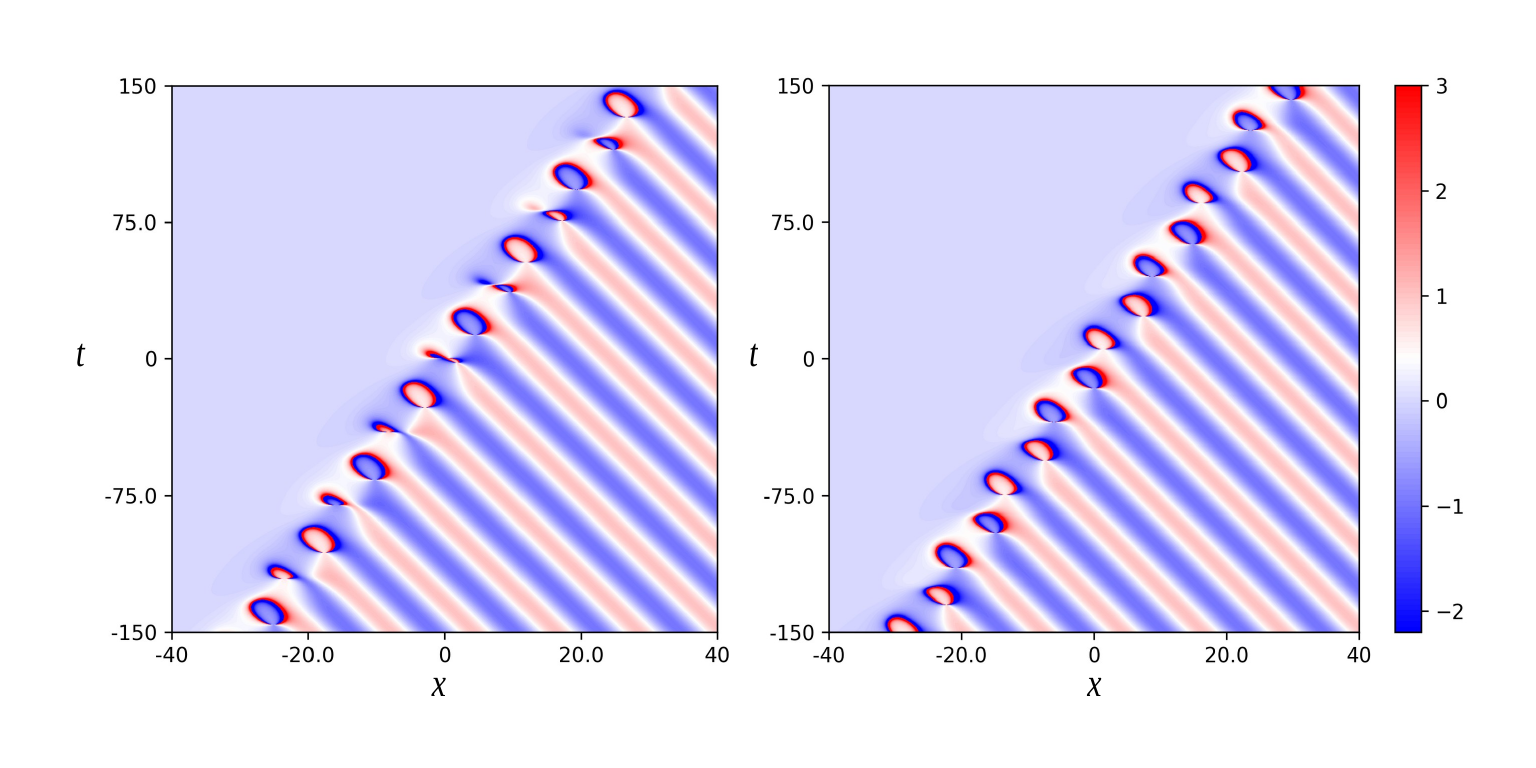}}
	\caption{The graphs of the two-soliton solutions, which correspond to
		two simple zeros at $k=p_1$ and $k=-\bar{p}_1$ of $a_1(k)$, see \eqref{zrlcb}.
		These graphs are plotted from the exact formula 
		\eqref{usII} with $A=1$, $B=0.26$,
		and $\eta_1=1$ (left),
		and $\eta_1=-1$ (right).}
	\label{fusII}
	%\end{minipage}
\end{figure}
\begin{figure}
	%\begin{minipage}[h]{0.99\linewidth}
	\centering{\includegraphics[scale=0.6]
		{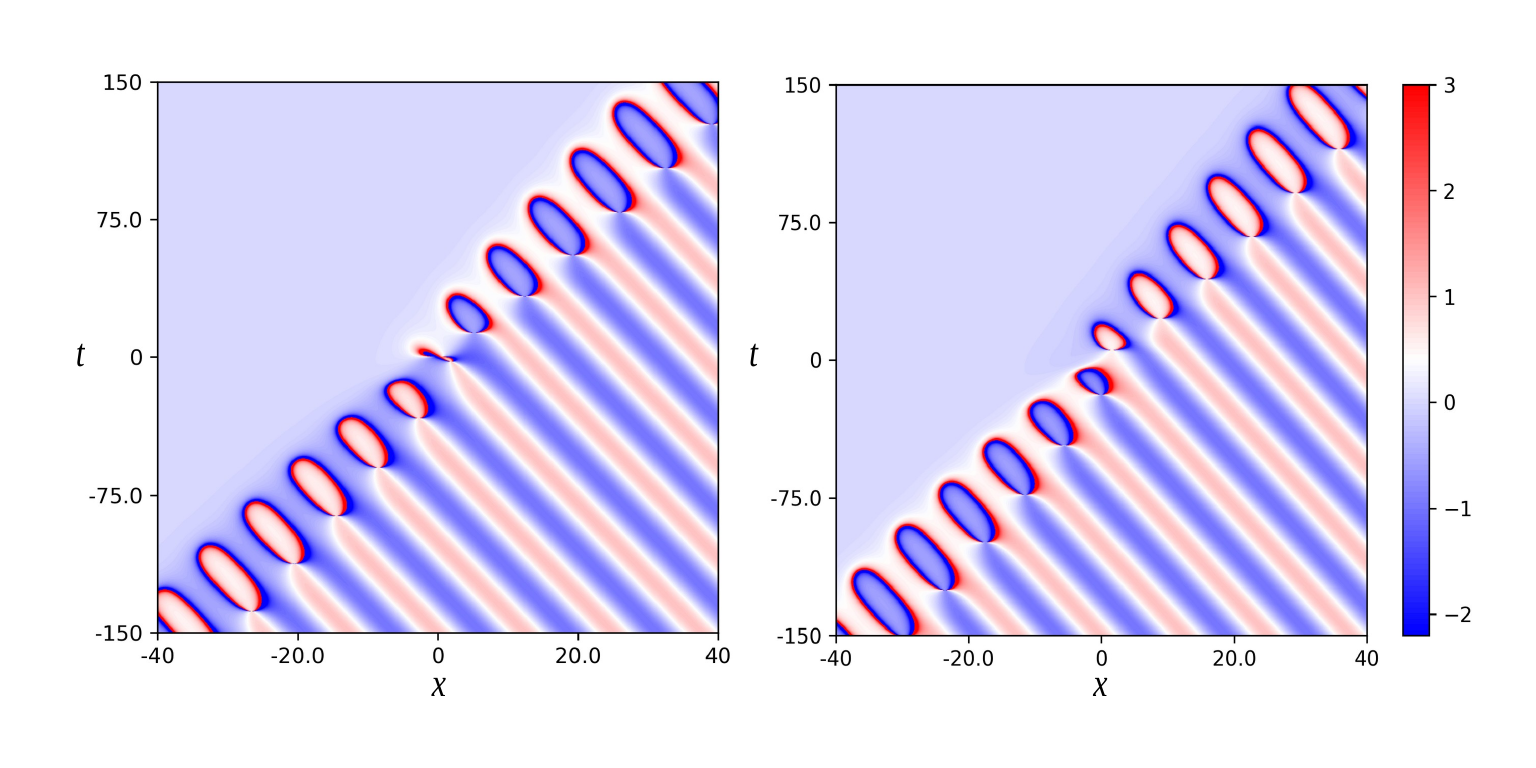}}
	\caption{The graphs of the two-soliton solutions, which correspond to
		one double zero at $k=\I\ell_1$ of $a_1(k)$, see \eqref{zrlcc}.
		These graphs are plotted from the exact formula 
		\eqref{usIII} with $A=1$, $B=\frac{1}{4}$,
		and $\nu_1=1$ (left),
		and $\nu_1=-1$ (right).}
	\label{fusIII}
	%\end{minipage}
\end{figure}
\begin{proof}
	\textbf{Case $\widetilde{\mathrm{\mathbf{I}}}$.}
	Here $M(x,t,k)$ solves the Riemann-Hilbert problem \eqref{RHtM} in Lemma \ref{L1nm}
	with (here $k_j$, $j=1,2$, are given by \eqref{zrlca})
	\begin{equation}\label{wjI}
		(w_1,w_2)=(\I k_1,\I k_2),\quad (q_1,q_2)=(B,-B),
	\end{equation}
	and with $c_j(x,t)$, $f_j(x,t)$, $j=1,2$, having the following form
	(see \eqref{rskj}, \eqref{a1rc}, \eqref{res+-B}; recall also \eqref{phij}, \eqref{phiaj}):
	\begin{equation}\label{cjI}
	\begin{split}
		&c_j(x,t)=(-1)^j\I\frac{\gamma_j(k_j^2+B^2)}{k_2-k_1}
			%e^{-2k_jx+8k_j^3t},
			e^{\phi_j(x,t)},
			\quad j=1,2,\\
		&f_j(x,t)=-\I\frac{A}{4}
		e^{(-1)^j\I\phi(x,t)}
		%e^{(-1)^j(2\I Bx+8\I B^3t)},
		\quad j=1,2.
	\end{split}
	\end{equation}
	Combining \eqref{xi-zeta}, \eqref{wjI}, and \eqref{cjI}, we obtain the following expressions for $\xi_j$ and $\zeta_j$, $j=1,2$:
	\begin{equation}\label{xi-zeta-I}
		\xi_j(x,t)=\begin{pmatrix}
		\gamma_je^{\phi_j(x,t)}
		%e^{-2k_jx+8k_j^3t}
		\\1
		\end{pmatrix},\quad
		\zeta_j(x,t)=\begin{pmatrix}
		-e^{(-1)^j\I\phi(x,t)}
		%-e^{(-1)^j(2\I Bx+8\I B^3t)}
		\\1
		\end{pmatrix}
		\quad j=1,2,
	\end{equation}
	where we have used that $(B+\I k_1)(B+\I k_2)=\I\frac{AB}{2}$,
	see \eqref{zrlca}.
	
	Combining \eqref{Nij}, \eqref{wjI}, and \eqref{xi-zeta-I}, we obtain the following equations for the elements of the matrix $N(x,t)$:
	\begin{equation}\label{INij}
		%N_{1j}=\frac{1}{B-\I k_j}\left(1-\gamma_je^{\phi_j-\I\phi}\right),
		%\quad
		%N_{2,j}=-\frac{1}{B+\I k_j}
		%\left(1-\gamma_je^{\phi_j+\I\phi}\right)
		N_{ij}(x,t)=\frac{(-1)^{i+1}}{B+(-1)^i\I k_j}
		\left(1-\gamma_je^{\phi_j(x,t)+(-1)^i\I\phi(x,t)}\right),\quad
		i,j=1,2.
	\end{equation}
	Equations \eqref{N-1-xi-zeta}, \eqref{xi-zeta-I}, and \eqref{INij} imply
	that (here and below we drop the arguments of
	$\phi(x,t)$ and $\phi_j(x,t)$, $j=1,2$)
	\begin{equation}\label{N1I}
	\begin{split}
		\det N_1(x,t)&=2\mathrm{Re}\,\left(\frac{\I(k_1-k_2)}
		{(B-\I k_1)(B-\I k_2)}e^{\I\phi}\right)
		-\frac{2\gamma_1B}{B^2+k_1^2}e^{\phi_1}
		+\frac{2\gamma_2B}{B^2+k_2^2}e^{\phi_2}\\
		&=\frac{2s_1}{AB}\cos\phi
		-\frac{16\gamma_1B}{A^2-As_1}e^{\phi_1}
		+\frac{16\gamma_2B}{A^2+As_1}e^{\phi_2},
	\end{split}
	\end{equation}
	where we have used that $B^2+k_j^2=\frac{A}{8}(A+(-1)^js_1)$, $j=1,2$,
	and $k_2-k_1=s_1/2$.
	
	Taking into account that
	$$
	\frac{1}{(B+\I k_1)(B-\I k_2)}=\frac{8}{A^2}+\I\frac{2s_1}{A^2B},
	$$
	and using \eqref{INij}, we obtain that
	\begin{equation}\label{NI}
	\begin{split}
		\det N(x,t)&=2\I\mathrm{Im}\left[
		\frac{1-\gamma_1e^{\phi_1+\I\phi}-\gamma_2e^{\phi_2-\I\phi}
			+\gamma_1\gamma_2e^{\phi_1+\phi_2}}
			{(B+\I k_1)(B-\I k_2)}\right]\\
			%&=2\I\left(\frac{2s_1}{A^2B}
			%-\frac{2s_1}{A^2B}
			%\left(\gamma_1e^{\phi_1}+\gamma_2e^{\phi_2}\right)\cos\phi
			%-\frac{8}{A^2}
			%\left(\gamma_1e^{\phi_1}-\gamma_2e^{\phi_2}\right)\sin\phi
			%+\gamma_1\gamma_2\frac{2s_1}{A^2B}e^{\phi_1+\phi_2}
			%\right)\\
			&=\frac{4\I s_1}{A^2B}\left(1
			-\left(\gamma_1e^{\phi_1}+\gamma_2e^{\phi_2}\right)\cos\phi
			-\frac{4B}{s_1}
			\left(\gamma_1e^{\phi_1}-\gamma_2e^{\phi_2}\right)\sin\phi
			+\gamma_1\gamma_2e^{\phi_1+\phi_2}
			\right).
	\end{split}
	\end{equation}
	Combining \eqref{msol}, \eqref{M12sol}, \eqref{N1I}, and \eqref{NI}, we arrive at \eqref{usI}.
	\medskip
	
	\textbf{Case $\widetilde{\mathrm{\mathbf{II}}}$.}
	In this case, we use Lemma \ref{L1nm} with
	\begin{equation}
		\label{wjII}
		(w_1,w_2)=(p_1,-\bar{p}_1),\quad (q_1,q_2)=(B,-B),
	\end{equation}
	where $p_1$ is given by \eqref{zrlcb},
	and with $c_j(x,t)$, $f_j(x,t)$, $j=1,2$, having the following form
	(recall \eqref{rsp1}, \eqref{a1rc}, \eqref{res+-B}, and \eqref{phij}):
	\begin{equation}
		\label{cjII}
		\begin{split}
			&c_1(x,t)=\eta_1\frac{p_1^2-B^2}{2\mathrm{Re}\,p_1}
			e^{2\I p_1x+8\I p_1^3t},\\
			%e^{\phi_3+\I\phi_4}
			&c_2(x,t)=\eta_1\frac{B^2-\bar{p}_1^2}{2\mathrm{Re}\,p_1}
			e^{-2\I\bar{p}_1x-8\I\bar{p}_1^3t},\\
			&f_j(x,t)=-\I\frac{A}{4}
			e^{(-1)^j\I\phi(x,t)}
			\quad j=1,2.
		\end{split}
	\end{equation}
	Using that $(B-p_1)(B+\bar{p}_1)=-\I\frac{AB}{2}$ and 
	(see \eqref{zrlcb} and \eqref{phi34})
	\begin{equation*}
		2\I p_1x+8\I p_1^3t=\phi_3+\I\phi_4,
	\end{equation*} 
	we conclude from \eqref{wjII} and \eqref{cjII} that
	$\xi_j$ and $\zeta_j$, $j=1,2$, see \eqref{xi-zeta}, 
	read as follows:
	\begin{equation}
		\label{xi-zeta-II}
		\xi_1(x,t)=\begin{pmatrix}
			\eta_1e^{\phi_3+\I\phi_4}
			\\1
		\end{pmatrix},\quad
		\xi_2(x,t)=\begin{pmatrix}
			\eta_1e^{\phi_3-\I\phi_4}
			\\1
		\end{pmatrix},\quad
		\zeta_j(x,t)=\begin{pmatrix}
			-e^{(-1)^j\I\phi(x,t)}
			%-e^{(-1)^j(2\I Bx+8\I B^3t)}
			\\1
		\end{pmatrix},
		\quad j=1,2.
	\end{equation}
	Then, equations \eqref{Nij}, \eqref{wjII}, and \eqref{xi-zeta-II} yield
	the following expressions for the elements of the matrix $N(x,t)$
	(here and below we drop the arguments $x,t$ for simplicity):
	\begin{equation}
	\begin{split}
		\label{IINij}
		&N_{11}=\frac{1}{B-p_1}
		\left(1-\eta_1e^{\phi_3+\I(\phi_4-\phi)}\right),\quad
		N_{21}=-\frac{1}{B+p_1}
		\left(1-\eta_1e^{\phi_3+\I(\phi_4+\phi)}\right),\\
		&N_{12}=\frac{1}{B+\bar{p}_1}
		\left(1-\eta_1e^{\phi_3-\I(\phi_4+\phi)}\right),\quad
		N_{22}=\frac{1}{\bar{p}_1-B}
		\left(1-\eta_1e^{\phi_3-\I(\phi_4-\phi)}\right).
	\end{split}
	\end{equation}
	
	Combining \eqref{N-1-xi-zeta}, \eqref{xi-zeta-II}, and \eqref{IINij}, we conclude that
	\begin{equation}\label{N1II}
		\det N_1=2\I\,\mathrm{Im}\left(
		\frac{1}{B+p_1}\left(e^{-\I\phi}-\eta_1e^{\phi_3+\I\phi_4}\right)
		+\frac{1}{B-p_1}\left(e^{\I\phi}-\eta_1e^{\phi_3+\I\phi_4}\right)
		\right),
	\end{equation}
	and
	\begin{equation}\label{NII}
		\det N=\frac{1-2\eta_1 e^{\phi_3}\cos(\phi_4+\phi)
				+e^{2\phi_3}}{(B+p_1)(B+\bar{p}_1)}
			-\frac{1-2\eta_1 e^{\phi_3}\cos(\phi_4-\phi)
				+e^{2\phi_3}}{(B-p_1)(B-\bar{p}_1)}.
	\end{equation}
	Applying \eqref{msol} and \eqref{M12sol} together with \eqref{N1II} and \eqref{NII}, we obtain that
	\begin{subequations}
		\label{usIIb}
	\begin{equation}
		u(x,t)=\frac{F_1(x,t)}{F_2(x,t)},
	\end{equation}
	where (the arguments $x,t$ are dropped here)
	\begin{equation}
		\begin{split}
			&F_1=-4\mathrm{Im}\left[(B+\bar{p}_1)(B-\bar{p}_1)
			\left((B-p_1)
			\left(e^{-\I\phi}-\eta_1e^{\phi_3+\I\phi_4}\right)
			+(B+p_1)
			\left(e^{\I\phi}-\eta_1e^{\phi_3+\I\phi_4}\right)\right)\right],\\
			&F_2=(B-p_1)(B-\bar{p}_1)
			\left(1-2\eta_1 e^{\phi_3}\cos(\phi_4+\phi)+e^{2\phi_3}\right)\\
			&\qquad\,\,-(B+p_1)(B+\bar{p}_1)
			\left(1-2\eta_1 e^{\phi_3}\cos(\phi_4-\phi)+e^{2\phi_3}\right).			
		\end{split}
	\end{equation}
	\end{subequations}
	Taking into account that (recall \eqref{zrlcb} and \eqref{s1-2})
	\begin{equation*}
	\begin{split}
		&(B+\bar{p}_1)(B-\bar{p}_1)=\frac{A}{8}(A-\I s_2),\\
		&(B+p_1)(B+\bar{p}_1)=2B^2-\frac{Bs_2}{2},\\
		&(B-p_1)(B-\bar{p}_1)=2B^2+\frac{Bs_2}{2},
	\end{split}
	\end{equation*}
	we have \eqref{usII} from \eqref{usIIb} by direct computations.
	\medskip
	
	\textbf{Case $\widetilde{\mathrm{\mathbf{III}}}$.}
	Here $M(x,t,k)$ satisfies the Riemann-Hilbert problem \eqref{dRHtM}
	discussed in Lemma \ref{L2nm}, where (recall \eqref{zrlcc})
	\begin{equation}
		\label{w1III}
		w_1=\I\ell_1=\I\frac{A}{4},\quad (q_1,q_2)=(B,-B),
	\end{equation}
	and with $\tilde{c}_j(x,t)$, $j=1,2,3$, $\tilde{f}_j(x,t)$, $j=1,2$, having the following form
	(recall \eqref{rsl1}, \eqref{a1rc}, \eqref{res+-B}, \eqref{phij}, and
	\eqref{phi5}; 
	notice also that $B=\frac{A}{4}$, see Corollary \ref{rlc}):
	\begin{equation}
	\label{cjIII}
	\begin{split}
		&\tilde{c}_j(x,t)=-\nu_1\frac{A^2}{8}
		e^{\phi_5(x,t)},\quad j=1,3,\\
		&\tilde{c}_2(x,t)=-\I\nu_1\frac{A^2}{4}
		\left(x-\frac{3}{4}A^2t-\frac{2}{A}\right)
		e^{\phi_5(x,t)},\\
		&\tilde{f}_j(x,t)=-\I\frac{A}{4}
		e^{(-1)^j\I\phi(x,t)}
		\quad j=1,2.
	\end{split}
	\end{equation}
	In \eqref{cjIII} we have used that
	\begin{equation*}
		a_1^{\prime\prime}(\I\ell_1)=-\frac{16}{A^2},\quad
		\mbox{and}\quad
		a_1^{\prime\prime\prime}(\I\ell_1)=-\I\frac{192}{A^3}.
	\end{equation*}
	Combining \eqref{w1III}, \eqref{txi-zeta}, and \eqref{cjIII}, we conclude that $\tilde{\xi}_j$, $j=1,2,3$, and $\tilde{\zeta}_j$, $j=1,2$, have the following form:
	\begin{equation}
		\label{txi-zeta-III}
	\begin{split}
		&\tilde{\xi}_j(x,t)=\begin{pmatrix}
			\nu_1e^{\phi_5(x,t)}
			\\1
		\end{pmatrix}\quad j=1,3,\quad
		\tilde{\xi}_2(x,t)=\begin{pmatrix}
			2\I\nu_1\left(x-\frac{3}{4}A^2t\right)e^{\phi_5(x,t)}
			\\0
		\end{pmatrix},\\
		%\label{txi-zeta-b}
		&\tilde{\zeta}_j(x,t)=\begin{pmatrix}
			-e^{(-1)^j\I\phi(x,t)}
			\\1
		\end{pmatrix}
		\quad j=1,2.
	\end{split}
	\end{equation}
	From Equations \eqref{tNij}, \eqref{w1III}, and \eqref{txi-zeta-III}
	we obtain the following expressions for the elements of the matrix
	$\tilde{N}(x,t)$
	(here and below the arguments $x,t$ are dropped for simplicity):
	\begin{subequations}\label{IIINij}
	\begin{equation}
		%\label{IIINija}
		\tilde{N}_{11}=\frac{4}{A-\I A}
		\left(1-\nu_1e^{\phi_5-\I\phi}\right),\quad
		\tilde{N}_{21}=-\frac{4}{A+\I A}
		\left(1-\nu_1e^{\phi_5+\I\phi}\right),
	\end{equation}
	and
	\begin{equation}
		%\label{IIINijb}
	\begin{split}
		&\tilde{N}_{12}=-\I\frac{8}{A^2}
		\left(\nu_1e^{\phi_5-\I\phi}
		+\frac{\nu_1(1+\I)}{2}
		\left(Ax-\frac{3}{4}A^3t\right)e^{\phi_5-\I\phi}-1\right),\\
		&\tilde{N}_{22}=\I\frac{8}{A^2}
		\left(\nu_1e^{\phi_5+\I\phi}
		+\frac{\nu_1(1-\I)}{2}
		\left(Ax-\frac{3}{4}A^3t\right)e^{\phi_5+\I\phi}-1\right).
	\end{split}
	\end{equation}
	\end{subequations}
	Equations \eqref{tN-1-xi-zeta}, \eqref{txi-zeta-III}, and 
	\eqref{IIINij} imply that
	\begin{equation}\label{N1III}
		\det\tilde{N}_1=\I\frac{16}{A^2}
		\left(\nu_1e^{\phi_5}
		+\frac{\nu_1}{2}
		\left(Ax-\frac{3}{4}A^3t\right)e^{\phi_5}
		-\cos\phi
		\right),
	\end{equation}
	as well as
	\begin{equation}\label{NIII}
		\det\tilde{N}=\frac{32}{A^3}\left(
		e^{2\phi_5}-2\nu_1e^{\phi_5}\cos\phi
		-\nu_1\left(Ax-\frac{3}{4}A^3t\right)e^{\phi_5}\sin\phi+1
		\right).
	\end{equation}
	Finally, combining \eqref{tM12sol}, \eqref{N1III} and \eqref{NIII}, we arrive at \eqref{usIII}.
\end{proof}
\begin{remark}[Blow-up points]\label{Blup}
	Two-soliton solutions \eqref{usI}, \eqref{usII}, and \eqref{usIII} blow-up in the uniform norm at zeros of the corresponding denominators.
	These points form the curves in the $(x,t)$-plane, which can be inferred from Figures \ref{fusI}, \ref{fusII}, and \ref{fusIII} for, respectively, the solutions \eqref{usI}, \eqref{usII}, and \eqref{usIII} with different values of the norming constants $\gamma_j$, $j=1,2$, $\eta_1$, and $\nu_1$.
\end{remark}

\begin{remark}[Solutions of the nmKdV equation]
	Solitons with step-like oscillating boundary conditions \eqref{bcs}
	given in Theorem \ref{Thtws} blow-up in finite time
	for any values of the norming constants $\gamma_j$, $j=1,2$, $\eta_1$, and $\nu_1$.
	Observe that soliton solutions of the nmKdV equation with vanishing background
	can be regular for all finite $x$ and $t$
	for certain values of the spectral parameters
	\cite[Section 3]{JZ17}, \cite[Section 5]{ZTYZ24}
	(cf.~kink-type solution \cite[Equation (68)]{XF23}
	satisfying \eqref{bcs} with $B=0$).
	In this regard, we notice that the work \cite{LF24}
	discusses the global existence of solutions
	of the nmKdV equation with small, in a sense, initial data
	by employing the Riemann-Hilbert approach.
	The well-posedness questions of the related 
	nonlocal NLS equation \eqref{NNLS},
	which also admits singular solitons,
	were treated in \cite{CLW23, G17, RS23} by various methods.
\end{remark}

We close this section by giving the large-time asymptotic formulas for the soliton solutions presented in Theorem \ref{Thtws}.
\begin{corollary}
	Two-soliton solutions given in
	Cases $\widetilde{\mathrm{I}}$, $\widetilde{\mathrm{II}}$,
	and $\widetilde{\mathrm{III}}$ of Theorem \ref{Thtws}
	%\eqref{usI}, \eqref{usII}, and \eqref{usIII}
	have the following asymptotic behavior as $t\to\infty$ along the rays
	$\frac{x}{t}=const$ (the bounds below are uniform in the corresponding regions away from any neighborhoods of possible zeros of the denominators):
	\begin{description}[font=\normalfont, itemindent=-\parindent]
		\item[\textit{Case} $\widetilde{\mathrm{I}}$]
		asymptotics of the solution \eqref{usI} reads as follows:
		\begin{enumerate}[label=(\roman*)]
			\item decaying region, $x<4k_1^2t$:
			\begin{equation*}
				u(x,t)=O(e^{-ct}),\quad c=c(x/t)>0,\quad t\to\infty;
			\end{equation*}
			
			\item transition region, $x=4k_1^2t+x^\prime$
			with any fixed $x^\prime\in\R$:
			\begin{equation}\label{tr1}
				u(x,t)=\frac{\frac{A}{2}\left(A-s_1\right)}
				{4B\sin\phi-s_1\cos\phi+\gamma_1s_1e^{-2k_1x^\prime}}
				+O(e^{-ct}),
				\quad c>0,\quad t\to\infty.
			\end{equation}
			Notice that the asymptotic behavior does not depend on the norming constant $\gamma_2$, cf.~Figure \ref{fusI};
			
			\item large-amplitude oscillations region,
			$4k_1^2t<x<4k_2^2t$:
			\begin{equation*}
				u(x,t)=\frac{\frac{A}{2}\left(A-s_1\right)}
				{4B\sin\phi-s_1\cos\phi}
				+O(e^{-ct}),
				\quad c=c(x/t)>0,\quad t\to\infty.
			\end{equation*}
			Here the asymptotics does not depend on the values of the both norming constants $\gamma_1$ and $\gamma_2$;
			
			\item transition region, $x=4k_2^2t+x^\prime$
			with any fixed $x^\prime\in\R$:
			\begin{equation*}
				u(x,t)=\frac{\frac{A}{2}\left(2s_1
					e^{2k_2x^\prime}\cos\phi+\gamma_2A
					-\gamma_2s_1\right)}
				{s_1e^{2k_2x^\prime}+4B\gamma_2\sin\phi-s_1\gamma_2\cos\phi}
				+O(e^{-ct}),
				\quad c>0,\quad t\to\infty.
			\end{equation*}
			Asymptotics in this transition region does not depend on $\gamma_1$, cf.~\eqref{tr1} and Figure \ref{fusI};
			
			\item periodic region, $x>4k_2^2t$:
			\begin{equation*}
				u(x,t)=A\cos\phi+O(e^{-ct}),
				\quad c=c(x/t)>0,\quad t\to\infty;
			\end{equation*}
		\end{enumerate}
		
		\item[\textit{Case} $\widetilde{\mathrm{II}}$]
		asymptotics of the solution \eqref{usII} reads as follows:
		\begin{enumerate}[label=(\roman*)]
			\item decaying region, $x<(A^2-12B^2)t$:
			\begin{equation*}
				u(x,t)=O(e^{-ct}),\quad c=c(x/t)>0,\quad t\to\infty;
			\end{equation*}
			
			\item transition region, $x=(A^2-12B^2)t+x^\prime$
			with any fixed $x^\prime\in\R$:
			\begin{equation*}
				u(x,t)=\frac{A\left(s_2\cos\phi+\eta_1e^{-\frac{A}{2}x^\prime}
					\left(A\sin\tilde{\phi}_4-s_2\cos\tilde{\phi_4}
					\right)\right)}
				{s_2\left(e^{-Ax^\prime}+1\right)
					+2\eta_1e^{-\frac{A}{2}x^\prime}
					\left(4B\sin\tilde{\phi_4}\sin\phi
					-s_2\cos\tilde{\phi}_4\cos\phi\right)}
				+O(e^{-ct}),
				\quad c>0,\quad t\to\infty,
			\end{equation*}
			with $\tilde{\phi}_4=\frac{s_2}{2}(8B^2t-x^\prime)$, cf.~$\phi_4$ in \eqref{phi34};
			
			\item periodic region, $x>(A^2-12B^2)t$:
			\begin{equation*}
				u(x,t)=A\cos\phi+O(e^{-ct}),
				\quad c=c(x/t)>0,\quad t\to\infty;
			\end{equation*}
		\end{enumerate}
		
		\item[\textit{Case} $\widetilde{\mathrm{III}}$]
		asymptotics of the solution \eqref{usIII} reads as follows (recall that here $\ell_1=\frac{A}{4}$):
		\begin{enumerate}[label=(\roman*)]
			\item decaying region, $x<\frac{A^2}{4}t$:
			\begin{equation*}
				u(x,t)=O(e^{-ct}),\quad c=c(x/t)>0,\quad t\to\infty;
			\end{equation*}
			
			\item transition region, $x=\frac{A^2}{4}t+x^\prime$ with any fixed $x^\prime\in\R$:
			\begin{equation*}
				u(x,t)=\frac{A\left(e^{2\ell_1x^\prime}\cos\phi-\nu_1
					-\frac{\nu_1}{2}
					\left(Ax^\prime-\frac{1}{2}A^3t\right)
					\right)}
				{e^{-2\ell_1x^\prime}-\nu_1\left(2\cos\phi
					+\left(Ax^\prime-\frac{1}{2}A^3t\right)\sin\phi\right)
					+e^{2\ell_1x^\prime}}
					+O(e^{-ct}),
					\quad c>0,\quad t\to\infty;
			\end{equation*}
			
			\item periodic region, $x>\frac{A^2}{4}t$:
			\begin{equation*}
				u(x,t)=A\cos\phi+O(e^{-ct}),
				\quad c=c(x/t)>0,\quad t\to\infty.
			\end{equation*}
		\end{enumerate}	
	\end{description}
	Similarly, one can obtain asymptotic formulas of the soliton solutions 
	\eqref{usI}, \eqref{usII}, and \eqref{usIII} as $t\to-\infty$;
	we omit the precise formulas here for brevity.
\end{corollary}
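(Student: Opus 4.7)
The approach is purely computational: the two-soliton solutions \eqref{usI}, \eqref{usII}, \eqref{usIII} are explicit rational-exponential expressions in the phases $\phi,\phi_1,\phi_2$ (Case $\widetilde{\mathrm{I}}$), $\phi,\phi_3,\phi_4$ (Case $\widetilde{\mathrm{II}}$), and $\phi,\phi_5$ (Case $\widetilde{\mathrm{III}}$), so their behavior along a ray $x/t=v$ is decided by which of the $e^{\phi_j}$ are dominant, subdominant, or neutral. The skeleton of the proof is to catalogue the signs of each phase as a function of $v$, divide out the dominant exponential from numerator and denominator, read off the limit, and estimate the remainder by the ratio of the next-largest exponential to the dominant one.

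For Case $\widetilde{\mathrm{I}}$, using $\phi_j=-2k_j(x-4k_j^2 t)$ with $0<k_1<k_2$, the ray $x=vt$ splits into five regimes separated by $v=4k_1^2$ and $v=4k_2^2$. In each open regime I identify the dominant exponential: for instance, in the \emph{large-amplitude oscillations} region $4k_1^2 t<x<4k_2^2 t$ only $e^{\phi_2}$ grows, and the dominant numerator term $-\frac{A\gamma_2}{2}(s_1-A)e^{\phi_2}$ divided by the dominant denominator term $\gamma_2 e^{\phi_2}(4B\sin\phi-s_1\cos\phi)$ reproduces the stated limit, independent of both norming constants. In the decaying and periodic regions all, respectively none, of the $e^{\phi_j}$ dominate, and the limits are immediate. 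The transition regions are handled by the substitution $x=4k_j^2 t+x'$, which freezes $\phi_j$ to the $t$-independent value $-2k_jx'$ while pinning the other $\phi_i$ to $\pm\infty$; the remaining exponential factor cancels between numerator and denominator, leaving the explicit $x'$-dependent asymptotic formula, which depends on only one of the two norming constants as expected from the mechanism.

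The same scheme applies to Case $\widetilde{\mathrm{II}}$ using the phases $\phi_3,\phi_4$ of \eqref{phi34}: only $\phi_3=-\frac{A}{2}(x-(A^2-12B^2)t)$ controls the exponential size, while $\phi_4$ contributes bounded oscillations, so the single transition velocity $v=A^2-12B^2$ separates the decaying and periodic regions. Case $\widetilde{\mathrm{III}}$ is analogous using $\phi_5=-2\ell_1(x-4\ell_1^2 t)$, with the single nuance that the polynomial prefactor $Ax-\frac{3}{4}A^3 t$ in \eqref{usIII} grows only algebraically and, after the substitution $x=\frac{A^2}{4}t+x'$, rewrites as $Ax'-\frac{1}{2}A^3 t$, which is precisely what appears in the transition-region formula; the remaining algebraic growth is harmless against the exponential decay of the error.

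The main obstacle I expect is uniformity rather than any individual limit. The bound $O(e^{-ct})$ must hold uniformly on compact $v$-subsets of each open region, and the transition-region expansion must hold uniformly in $x'$ on any compact set avoiding the zeros of the limiting denominator (where the solution blows up, cf.~Remark~\ref{Blup}). To handle this I would express every remainder as a rational function of ratios $e^{\phi_i}/e^{\phi_j}$, show that on the prescribed compact set each such ratio is bounded above by $e^{-ct}$ with $c>0$ bounded below, and invoke continuity of the explicit limit formulas in $v$ (or $x'$) to promote pointwise estimates to uniform ones. The $t\to-\infty$ counterpart follows from the same argument after flipping the signs of all $\phi_j$, with the roles of the various regions permuted accordingly, which is why the paper omits those formulas.
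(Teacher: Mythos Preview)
The paper gives no proof of this corollary at all: it is stated immediately after Remark~\ref{Blup} and followed directly by the bibliography, so the asymptotics are treated as an immediate consequence of the explicit formulas \eqref{usI}--\eqref{usIII}. Your proposal is precisely the natural computation one carries out to verify such a corollary --- identify the sign of each exponential phase $\phi_j$ along the ray $x/t=v$, divide numerator and denominator by the dominant exponential, and read off the limit --- and it is correct in all essential details, including the identification of the critical velocities $4k_j^2$, $A^2-12B^2$, and $A^2/4$, the handling of the polynomial prefactor in Case~$\widetilde{\mathrm{III}}$ via the substitution $x=\tfrac{A^2}{4}t+x'$, and the uniformity caveat near zeros of the limiting denominators.
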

\medskip

\textbf{Data availability statement.}
Data availability is not applicable to this article as no new data were created or analysed in this study.

\textbf{Conflict of interest.}
The author declares no conflict of interest.

%%%%%%%%%%%%%%%%%%%%%%%
%%%%%%%%%%%%%%%%%%%%%%%

\end{document}